\newtheorem{thm}{Theorem}[section]
\newtheorem{lma}[thm]{Lemma}
\theoremstyle{definition}
\newtheorem{rmk}[thm]{Remark}
\newenvironment{pf}{\begin{proof}}{\end{proof}}
\numberwithin{equation}{section}
\newcommand{\R}{{\mathbb{R}}}
\newcommand{\C}{{\mathbb{C}}}
\newcommand{\act}{{\mathfrak{a}}}
\newcommand{\Ordo}{{\mathcal{O}}}
\newcommand{\M}{{\mathcal{M}}}
\newcommand{\pa}{\partial}
\newcommand{\area}{\operatorname{area}}
\newcommand{\krn}{\operatorname{ker}}
\newcommand{\sblv}{\mathcal{H}}
\newcommand{\pr}{\operatorname{pr}}
\newcommand{\CZ}{\operatorname{CZ}}
\title[Holomorphic curves for Legendrian surgery\dots]
{Holomorphic curves for Legendrian surgery}
\author{Tobias Ekholm}
\address{Department of mathematics, Uppsala University, 751 06 Uppsala, Sweden \newline
\indent Institut Mittag-Leffler, 182 60 Djursholm, Sweden}
\thanks{TE was supported by the Knut and Alice Wallenberg Foundation and the Swedish Reserach Council}
\subjclass[2010]{53D42, 53D40}
\begin{document}

\begin{abstract}
Let $X$ be a Weinstein manifold with ideal contact boundary $Y$. If $\Lambda\subset Y$ is a link of Legendrian spheres in $Y$ then by attaching Weinstein handles to $X$ along $\Lambda$ we get a Weinstein cobordism $X_{\Lambda}$ with a collection of Lagrangian co-core disks $C$ corresponding to $\Lambda$. In \cite{BEE, EL} it was shown that the wrapped Floer cohomology $CW^{\ast}(C)$ of $C$ in the Weinstein manifold $X'_{\Lambda}=X\cup X_{\Lambda}$is naturally isomorphic to the Legendrian differential graded algebra $CE^{\ast}(\Lambda)$ of $\Lambda$ in $Y$. The argument uses properties of moduli spaces of holomorphic curves, the proofs of which were only sketched. The purpose of this paper is to provide proofs of these properties. 
\end{abstract}

\maketitle
\section{Introduction}\label{S:intro}
A Weinstein $2n$-manifold $X$, see \cite{CE} for terminology, admits a handle decomposition with isotropic handles of dimension $\le n$. It follows from the h-principle for isotropic submanifolds of contact manifolds below maximal dimension that if $X$ is subcritical (i.e., if it admits a handle presentation without $n$-dimensional or \emph{critical} handles) that $X$ is flexible in the sense that any symplectic tangential homotopy equivalence of two subcritical manifolds is homotopic to a symplectomorphism. The class of flexible Weinstein manifolds includes also a class of critical manifolds: those with critical Lagrangian handles attached along a loose Legendrian link, see \cite{EM}. 

Flexibility no longer holds in the presence of critical handles attached along non-loose Legendrian links, which is when symplectic rigidity phenomena come to life. The main tool for detecting rigidity is holomorphic curve theories. For Weinstein manifolds the most basic such theory is the wrapped Floer cohomology of co-core disks $CW^{\ast}(C)$. In \cite{BEE,EL} $CW^{\ast}(C)$ was computed in terms of the Legendrian attaching spheres. 

To state this result with more precision, let $X$ be a Weinstein manifold with contact boundary $Y$ and $\Lambda\subset Y$ a link of Legendrian spheres. Let $X_{\Lambda}$ denote the Weinstein cobordism which is a neighborhood of Lagrangian handles attached along $\Lambda$ and $X'_{\Lambda}=X\cup X_{\Lambda}$. In each handle attached to a component of $\Lambda$ there is a Lagrangian co-core disk. Let $C$ denote the union of all co-core disks. Then $CW^{\ast}(C)$ is isomorphic to the Legendrian differential graded algebra $CE^{\ast}(\Lambda)$ of the attaching spheres in $Y$. 

Many other theories, e.g., the symplectic homology $SH^{\ast}(X_{\Lambda}')$ can be obtained algebraically from $CW^{\ast}(C)$. In \cite{BEE, BEE2} corresponding geometric surgery isomorphisms were described. The key points in the proofs of the surgery formulas are certain geometric properties of Reeb dynamics and moduli spaces of holomorphic disks and spheres. The purpose of this paper is to provide proofs of these. This will then complete the proofs in \cite{BEE, BEE2, EL}. 

We next give a more detailed description of the contents of the paper. In Section \ref{sec:anchor} we study moduli spaces needed for what was called anchoring in \cite{BEE}. Anchoring is useful not only for Legendrian surgery but for defining Legendrian differential graded algebras in fillable contact manifolds and for defining  wrapped Floer cohomology without Hamiltonian. Here one studies holomorphic disks with Lagrangian boundary condition in a Weinstein cobordism with negative end and the desired compactness for moduli spaces used to show chain map equations or that differentials square to zero fails because of splitting off of holomorphic planes at Reeb obits in the negative end. Anchoring then means counting all broken curves with holomorphic planes in the filling of the negative end to retain compactness. 

The holomorphic disks in these theories have a distinguished boundary puncture that induces an asymptotic marker on any orbit where a plane splits off. We utilize this marker to remove symmetry from holomorphic planes and thereby get a comparatively simple perturbation theory that allows for counts over the integers.         

We give a direct proof of the required transversality result using a specific form of perturbation defined near the initial unperturbed moduli space of holomorphic planes. To state the result, let $X$ be a Weinstein manifold and let $J$ be an almost complex structure compatible with the Weinstein structure of $X$. We assume for simplicity that $c_{1}(X)=0$ and $\pi_{1}(X)=0$. (It is straightforward to use the results proved here for Legendrian surgery in more general cases, see \cite[Section 7.3]{EO} for a discussion). Let $\gamma$ be a Reeb orbit with a marker and let $\mathcal{M}(\gamma)$ denote the moduli space of holomorphic spheres with asymptotic marker and positive puncture at $\gamma$. We use the marker and results on the asymptotics of holomorphic curves from \cite{HWZ96} to stabilize the domains of maps in $\mathcal{M}(\gamma)$ and then to define a functional analytic neighborhood $\mathcal{U}$ of $\mathcal{M}$. We show how to construct a  perturbation $\lambda$ of the Cauchy-Riemann equation $\bar\partial_{J}u=0$ to $\bar\pa_{J_{\lambda}} u=0$, where $J_{\lambda}$ is a domain dependent almost complex structure that is allowed to depend not only on the domain but also on the map in $\mathcal{U}$. Let $\mathcal{M}^{\lambda}(\gamma)$ denote the corresponding solution space. Let $|\gamma|=\CZ(\gamma)+(n-3)$ denote the grading of $\gamma$. Here $\CZ$ denotes the Conley-Zehnder index. We prove the following result.
\begin{thm}\label{t:anchor}
For generic perturbation $\lambda$, $\mathcal{M}^{\lambda}(\gamma)$ is a transversely cut out manifold of dimension $|\gamma|$ with a natural compactification $\overline{\mathcal{M}}^{\lambda}(\gamma)$ that consists of several level curves and $\overline{\mathcal{M}}^{\lambda}(\gamma)$ has the structure of a compact orientable manifold with boundary with corners.
\end{thm}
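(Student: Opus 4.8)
The proof follows the standard route for moduli spaces in symplectic field theory — a Fredholm description, universal transversality via Sard--Smale, SFT compactness, gluing, and coherent orientations — the one genuinely delicate point being that the perturbation parameter $\lambda$ ranges over the non-standard space of almost complex structures on the neighborhood $\mathcal{U}$ that may depend on both the (stabilized) domain and the map. First I would fix exponential weights $\delta>0$ at the puncture below the spectral gap of the asymptotic operator of the nondegenerate orbit $\gamma$, and over the stabilized domains supplied by $\mathcal{U}$ set up the Banach manifold of $W^{k,p}_{\delta}$-maps into the completion $\widehat X$ asymptotic to $\gamma$ with the prescribed asymptotic marker, together with the Banach bundle whose fiber over $u$ is $W^{k-1,p}_{\delta}\bigl(\Lambda^{0,1}u^{\ast}T\widehat X\bigr)$; the perturbed equation $\bar\pa_{J_{\lambda}}u=0$ is a smooth Fredholm section of this bundle. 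Using the asymptotic expansions of \cite{HWZ96} to justify the weighted setup and the compatibility of the stabilization with the linearization, Riemann--Roch for punctured spheres together with the index of the asymptotic operator at $\gamma$ and the vanishing of $c_{1}(X)$ give that the linearized operator is Fredholm of the index that makes $\mathcal{M}^{\lambda}(\gamma)$ have dimension $|\gamma|=\CZ(\gamma)+(n-3)$; the marker has already removed the rotational reparametrization, and the residual automorphisms are fixed by the marked-point data built into $\mathcal{U}$.

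The heart of the argument is to show that the \emph{universal} linearized operator, obtained by linearizing $\bar\pa_{J_{\lambda}}u$ jointly in $u$ and in $\lambda$, is surjective along the universal zero set. When $\gamma$ is simple, planes asymptotic to it are automatically somewhere injective — an entire reparametrization covering a simple puncture is affine — and then the usual Floer--Hofer--Salamon argument shows that variations of $J$ supported near an injective point already span a complement of the image of $D_{u}$; the reason for letting $J_{\lambda}$ depend on the \emph{map} as well as the domain is precisely that this conclusion persists for the multiply covered and symmetric configurations one cannot avoid (a multiple cover $u=v\circ\varphi$ is a different point of $\mathcal{U}$ than $v$, so $\lambda$ may be varied near it independently), so that a single perturbation scheme can be transverse everywhere at once. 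Unique continuation then kills any element of $\cokrn D_{u}$ orthogonal to all admissible variations, and Sard--Smale yields that for generic $\lambda$ the space $\mathcal{M}^{\lambda}(\gamma)$ is cut out transversely, hence a manifold of dimension $|\gamma|$.

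For the compactification, since $J_{\lambda}$ is a compact perturbation agreeing with the fixed cylindrical structure near the puncture and outside a compact set, the SFT compactness theorem applies to $\bar\pa_{J_{\lambda}}u=0$ and shows that every sequence in $\mathcal{M}^{\lambda}(\gamma)$ has a subsequence converging to a multi-level building with lowest level in $\widehat X$ and finitely many levels in the symplectization of $Y$ — the ``several level curves'' of the statement — so that $\overline{\mathcal{M}}^{\lambda}(\gamma)$ is compact. Energy bounds together with the genus-zero index bookkeeping rule out ghost and sphere components, so all limits are honest buildings. Applying the transversality of the previous paragraph \emph{coherently} to the pieces of such buildings — genericity survives the coherence constraint because distinct pieces of a building are distinct maps occupying essentially disjoint parts of $\mathcal{U}$, so imposing consistency does not cut down the available variations of $\lambda$ — makes each level transversely cut out, and the standard gluing theorem for punctured curves then produces, near a codimension-$k$ stratum (a $(k{+}1)$-level building, each of the $k$ breakings contributing one gluing parameter in $[0,\varepsilon)$), a chart $\R^{|\gamma|-k}\times[0,\varepsilon)^{k}$. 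Thus $\overline{\mathcal{M}}^{\lambda}(\gamma)$ is a compact manifold with boundary with corners. Finally, choosing coherent orientations of the determinant line bundles of the operators $D_{u}$ via capping operators as in \cite{BEE} — these exist because $c_{1}(X)=0$, and the residual $\Z/2$ ambiguity is pinned down by the asymptotic marker — gives an orientation compatible with the gluing maps, hence an orientation of $\overline{\mathcal{M}}^{\lambda}(\gamma)$.

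The main obstacle is not any single step but the compatibility of all of them: one must produce the domain- and map-dependent perturbation $\lambda$ so that it is at once (i) rich enough to cut out $\mathcal{M}^{\lambda}(\gamma)$ and every piece of every limiting building transversely — this is exactly where the dependence on the map is indispensable, since that is what defeats the multiply covered planes which ordinary domain-dependent perturbations cannot handle on an unstable domain — and (ii) coherent, so that the restriction of the global perturbation to the pieces of a building is what governs those pieces and the gluing theorem applies. Checking that genericity is not destroyed by the coherence requirement, and carrying out the gluing in the presence of the marker-induced asymptotic constraints at the breaking orbits, is where the real work of the section lies.
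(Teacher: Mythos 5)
Your proposal captures the high-level shape of the argument—stabilization via the asymptotic marker, a domain- and map-dependent perturbation, SFT compactness, gluing, coherent orientations—but it leaves out the part that constitutes the real content of the paper's proof, and it proposes an abstract Sard--Smale route that the paper deliberately avoids because it runs into the coherence problem you yourself flag at the end.

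The central gap is the sentence ``the residual automorphisms are fixed by the marked-point data built into $\mathcal{U}$.'' You take $\mathcal{U}$ and its marked points as given, but a holomorphic plane has an unstable domain: there are no a priori marked points, so ``domain-dependent $J$'' is not yet meaningful, and this is precisely what the paper has to build. The construction goes through the refined asymptotic expansion of Lemma~\ref{l:morefourier}, the stratification of the moduli space by leading eigenvalue (Lemma~\ref{l:decaycontrol}), and the open cover $\mathcal{U}^{1}\cup\dots\cup\mathcal{U}^{m}$ with radii $r_m>\dots>r_1$ chosen so that the fiber disks $D_{u(1)}(r_j)$ are in good position (Lemma~\ref{l:opencover}); the preimages of $\partial D_{u(1)}(r_j)$ and $\partial D_{u(1)}(r_j/2)$ supply the two extra marked points that kill the automorphism group $\zeta\mapsto k(\zeta-b)$, and the transition maps between the charts $\mathcal{V}_j$ have to be checked to be smooth using the same asymptotic control. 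Without this, your functional-analytic setup is not defined. A second, related omission: branched covers of the trivial cylinder over $\gamma$ sit inside every such moduli space at zero energy, they have no injective points at all, and your Floer--Hofer--Salamon argument cannot touch them even with map dependence; the paper handles them separately by importing the Deligne--Mumford/cylindrical-ends construction from \cite{EO}. Finally, your ``universal Sard--Smale'' pass needs the perturbation near the boundary of $\mathcal{M}(\gamma^{(s)})$ to be \emph{determined} by the perturbations already chosen at all lower action levels; genericity does not automatically ``survive the coherence constraint''—that is exactly why the paper replaces Sard--Smale by an explicit induction on action and Hofer energy, gluing the inherited boundary perturbation to a freshly chosen one over the cover $\mathcal{U}^{j}$. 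If you supply the stabilization of Lemmas~\ref{l:morefourier}--\ref{l:opencover}, the separate treatment of branched covers of trivial cylinders, and replace the Sard--Smale sweep by the paper's action/Hofer-energy induction, the rest of your outline (compactness, gluing, orientation from the determinant bundle) is in line with the paper.
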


In Sections \ref{S:localmodel} and \ref{S:cobconstr} we give detailed geometric models of Lagrangian handle attachment along a Legendrian sphere and in Section \ref{S:Reebinbasic} we study Reeb dynamics in these models. To state the result, let $Y_{\Lambda}(\epsilon)$ be the contact manifold that results from attaching a width $\epsilon$ handle to the Weinstein manifold $X$ along the Legendrian link $\Lambda$ in its contact boundary $Y$. Let $X_{\Lambda}(\epsilon)$ denote the Weinstein cobordism which is a neighborhood of the handle.  Let $C\subset X_{\Lambda}(\epsilon)$ denote the union of co-core Lagrangian disks and $\Gamma$ the Legendrian boundary of $C$ in $Y_{\Lambda}(\epsilon)$.

\begin{thm}\label{t:Reebdyn}
For Reeb chords of action smaller than $\mathfrak{a}_{\epsilon}$, $\mathfrak{a}_{\epsilon}\to\infty$ as $\epsilon\to 0$, there is a natural 1-1 correspondence between the following sets:
\begin{enumerate}
\item The set of Reeb chords $\mathcal{R}(\Gamma)$ of $\Gamma$ and the set $\Omega(\mathcal{R}(\Lambda))$ of composable words of Reeb chords of $\Lambda$.
\item The set of Reeb orbits $\mathcal{R}^{\circ}(Y_{\Lambda}(\epsilon))$ in $Y_{\Lambda}(\epsilon)$ and the union $\mathcal{R}(Y)\cup \Omega^{\circ}(\mathcal{R}(\Lambda))$, where  $\Omega^{\circ}(\mathcal{R}(\Lambda))$  denotes the set of composable words of Reeb orbits up to cyclic permutation, or simply the set of composable cyclic words.  
\end{enumerate} 
\end{thm}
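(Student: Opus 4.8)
The plan is to carry out the whole argument inside the explicit models of Sections~\ref{S:localmodel} and~\ref{S:cobconstr}, reducing the statement to a controlled analysis of the Reeb flow of $Y_{\Lambda}(\epsilon)$ in the handle region, supplemented by a decomposition-and-reconstruction argument for Reeb trajectories.

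First I would record the normal form of the contact form on $Y_{\Lambda}(\epsilon)$ coming from Section~\ref{S:Reebinbasic}: outside a neighborhood of the attaching locus it agrees with the contact form of $Y$, and inside the handle it is given by an explicit model in which the Reeb flow can be integrated essentially by hand. From this model I want three facts. (i) Every Reeb trajectory that enters the handle region leaves it again, the handle behaving as a ``flow box'' steering a neighborhood of the co-core $C$ (and of $\Gamma=\pa C$) onto a neighborhood of the attaching sphere $\Lambda$, while trajectories that never enter the handle are exactly the Reeb trajectories of $Y$ that stay away from a neighborhood of $\Lambda$. (ii) The action picked up during a single passage through the handle is bounded, with bound tending to $0$ as $\epsilon\to 0$; in particular the short Reeb chord of $\Gamma$ lying entirely in the handle has action $\to 0$. (iii) The entry and exit points of a handle passage are $C^{1}$-close, with explicit $\epsilon$-estimates, to points of $\Lambda$, with the exit along the Reeb direction transverse to $\Lambda$ in $Y$.

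Granting (i)--(iii), the correspondence in (2) is assembled as follows. A closed Reeb orbit $\gamma$ of $Y_{\Lambda}(\epsilon)$ of action $<\mathfrak{a}_{\epsilon}$ either misses the handle region, in which case, after a generic perturbation of $\Lambda$ that disjoins it from the finitely many orbits of $Y$ in the relevant action range, it is one of the orbits in $\mathcal{R}(Y)$; or else it alternates cyclically between handle passages $h_{1},\dots,h_{k}$ and exterior arcs $a_{1},\dots,a_{k}$. By (iii) the endpoints of each $a_{j}$ lie near $\Lambda$, and since its action is bounded, $a_{j}$ is $C^{0}$-close to a genuine Reeb chord of $\Lambda$; after a generic perturbation making the chords of $\Lambda$ isolated and transverse, $a_{j}$ determines a unique chord $c_{i_{j}}\in\mathcal{R}(\Lambda)$, and the fact that $h_{j}$ joins the endpoint of $a_{j}$ to the start point of $a_{j+1}$ along $\Lambda$ forces the word $c_{i_{1}}\cdots c_{i_{k}}$ to be composable; reparametrizing $\gamma$ rotates the word, which yields a well-defined map to $\Omega^{\circ}(\mathcal{R}(\Lambda))$. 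Conversely, given a composable cyclic word, I would concatenate the corresponding chords of $\Lambda$ with the short handle arcs supplied by~(i) and run a fixed-point argument for the handle return map, well-posed thanks to the estimates in (ii)--(iii) (the handle passage being strongly contracting transverse to the flow in the model), to produce a unique nearby genuine Reeb orbit. Finally an action count based on (ii) shows that once $\epsilon$ is small enough that $\mathfrak{a}_{\epsilon}$ exceeds the desired bound every orbit below that bound is accounted for exactly once, and the same count gives $\mathfrak{a}_{\epsilon}\to\infty$. The proof of (1) is identical with ``cyclic word'' replaced by ``word'': a Reeb chord of $\Gamma$ starts and ends on $\Gamma$ inside the handle, so removing its two outermost half-passages reads off a, possibly empty, composable word of chords of $\Lambda$, the empty word corresponding to the short chord of $\Gamma$ from (ii).

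The step I expect to be the main obstacle is making (i)--(iii) precise and uniform: one must rule out spurious short trajectories created inside the handle, control the handle return map accurately enough that the concatenated configurations are nondegenerate fixed points, and show that a trajectory of bounded action cannot linger in or re-enter the handle in a pattern not captured by the alternating decomposition. This is precisely what the rigidity of the model built in Sections~\ref{S:localmodel}--\ref{S:cobconstr} is designed to provide; with a softer model the return map would be intractable. Once (i)--(iii) are in place, the dictionary, the fixed-point reconstruction, the action bookkeeping, and the distinction between cyclic words (orbits) and linear words (chords) are routine.
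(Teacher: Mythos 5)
Your plan matches the paper's proof (Lemmas \ref{l:neworbits} and \ref{l:newchords}) in structure: decompose a trajectory of $Y_\Lambda(\epsilon)$ into exterior arcs close to Reeb chords of $\Lambda$ alternating with handle passages, use the explicit flow through the handle (Lemma \ref{l:flowthrhandle}) to set up a directionally contracting return map, deduce existence and uniqueness from a fixed-point argument, and distinguish cyclic words for orbits from linear words for chords. The paper's actual fixed-point argument is slightly finer than your ``strongly contracting'' heuristic: the handle passage contracts in the fiber directions over $\Lambda$, the exterior Reeb flow contracts in the transverse ($H^\pm$) directions, and uniqueness in all directions is obtained by running the argument once forwards (the map $F$) and once backwards (the map $G$). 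You correctly anticipate that the hard part is making (i)--(iii) uniform; this is exactly what the rigid models and the estimates of Sections \ref{S:localmodel}--\ref{S:Reebinbasic} supply.

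One concrete point in your sketch is wrong: there is no short Reeb chord of $\Gamma$ lying entirely inside the handle, so the empty word does not correspond to a chord of $\Gamma$. Indeed $\Gamma=S_x(\epsilon^p)=\{y=0\}\cap V_{+\epsilon}$, and by \eqref{e:Reebflow} a Reeb trajectory starting at $(x(0),0)$ has $y_1(t)=\sqrt2\sinh(\sqrt2\,t)\,x_1(0)$ and $y_2(t)=\sqrt2\sinh(\sqrt2\,\epsilon^{2s}t)\,x_2(0)$, which vanish simultaneously at some $t\neq 0$ only if $x(0)=0$, which is not on $\Gamma$. Thus every Reeb chord of $\Gamma$ exits the handle and corresponds to a nonempty composable word; the empty word is the unit of $CE^{\ast}(\Lambda)$, which on the wrapped side matches the Morse minimum on $C$, not a Reeb chord of $\Gamma$. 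With that correction, the rest of your assembly of the dictionary is the paper's.
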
 
See Lemmas \ref{l:newchordsL_0} and \ref{l:newchordsmix} for closely related results for other Legendrians.

In Section \ref{s:buildcob'} we adapt the construction of $X_{\Lambda}$ to holomorphic curves. In Section \ref{A:D} we then construct and count basic holomorphic disks interpolating between the Reeb chords and orbits after surgery and the words of Reeb chords before the surgery in Theorem \ref{t:Reebdyn}. We use the following notation. If $w\in\Omega(\mathcal{R}(\Lambda))$ is a composable word of Reeb chords then $\overline{w}$ denotes the corresponding chord in $\mathcal{R}(c)$ and if $w^{\circ}\in\Omega^{\circ}(\mathcal{R}(\Lambda))$ is a composable cyclic word we write $\overline{w}^{\circ}$ for the corresponding orbit. Note also that if $C$ denotes the co-core disks and $L$ the core disks of the surgery, then $L\cap C$ intersects transversely in one point in each handle attached.

First, consider the unit disk $D$ in the complex plane with punctures fixed at $1$, $i$, and $-1$, and $m$ boundary punctures on the arc between $-1$ and $1$ in the lower half plane. Write $\partial_{+}D$ and $\partial_{-}D$ for the part of the boundary of $D$ that lies in the upper and lower half planes respectively. If $c$ is a Reeb chord of $\Gamma=\partial C$ and $w$ is a word of Reeb chords of $\Lambda=\partial L$ then we write $\mathcal{M}(c,w)$ for the moduli space of holomorphic disks $u\colon(D,\partial D_{+},\partial D_{-})\to (X_{\Lambda},C,L)$ with positive puncture at $c$ and negative punctures in $\partial D_{-}$ at the Reeb chords in the word $w$.   

Second, consider again the unit disk $D$ but now with a fixed puncture at $0$ with an asymptotic marker along the positive real axis and then additional boundary punctures. If $\gamma$ is a Reeb orbit with marker and $w^{\circ}$ is a cyclic word of Reeb chords, we write $\mathcal{M}(\gamma,w^{\circ})$ for the moduli space of holomorphic disks $u\colon (D,\partial D)\to(X_{\Lambda},L)$ with marked positive puncture at the marked Reeb orbit $\gamma$ and negative punctures in $w^{\circ}$.    

\begin{thm}\label{t:count}
Moduli spaces of holomorphic disks interpolating between Reeb chords and orbits after surgery and the corresponding words of Reeb chords before surgery satisfies the following.  	
\begin{enumerate}
\item 
For any word $w\in \Omega(\mathcal{R}(\Lambda))$, 
the moduli space $\mathcal{M}(\overline{w},w)$ is a transversely cut out orientable $0$-manifold with algebraically $\pm 1$ point in it.
\item 
For any composable cyclic word $w^{\circ}\in \Omega(\mathcal{R}^{\circ}(\Lambda))$
and any marker on (the geometric orbit underlying) $\overline{w}^{\circ}$, the moduli space $\mathcal{M}(\overline{w}^{\circ},w^{\circ})$ is a transversely cut out orientable $0$-manifold with algebraically $\pm 1$ point in it.
\end{enumerate} 
\end{thm}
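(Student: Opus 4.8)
The plan is to construct explicit holomorphic disks in the local handle model of Sections \ref{S:localmodel}--\ref{S:cobconstr} and then to promote these to transversely cut out solutions by a dimension and positivity argument, following the pattern used for the Reeb dynamics count in Theorem \ref{t:Reebdyn}. First I would set up the correspondence concretely: a composable word $w=c_1\cdots c_k\in\Omega(\mathcal R(\Lambda))$ determines, via the handle attachment, a chord $\overline w$ of $\Gamma=\partial C$ whose action is (up to $O(\epsilon)$) the sum of the actions of the $c_i$ plus the contributions from the $k$ intersection points of $C$ and $L$ traversed; the intended disk $u_0\in\mathcal M(\overline w,w)$ should be the "thin" disk that, in the handle coordinates, is a small holomorphic strip following the flow from $\overline w$ down along the core $L$, splitting into the pieces that limit onto $c_1,\dots,c_k$, and whose boundary runs alternately along $C$ and $L$ through exactly the transverse points $L\cap C$. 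In the surgered-for-curves model of Section \ref{s:buildcob'} this disk is rigid for purely local reasons, and one reads off its existence and uniqueness from the local model just as the chord correspondence in Theorem \ref{t:Reebdyn} is read off.

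Next I would establish transversality. Since the relevant disks live in (a neighborhood of) the handle where the geometry is an explicit product/linearized model, I expect the linearized $\bar\partial_J$ operator at $u_0$ to be computable directly — it splits according to the $k$ strip-like pieces, and on each piece one has a model operator on a strip with boundary conditions rotating between the Lagrangian planes tangent to $C$ and $L$, whose index is exactly the expected dimension $0$ and whose kernel and cokernel can be shown to vanish by an explicit ODE/Fourier computation (as in the standard computation for Floer strips between transverse Lagrangians, combined with the asymptotic analysis of \cite{HWZ96}). This gives that $\mathcal M(\overline w,w)$, restricted to a neighborhood of $u_0$, is a transversely cut out $0$-manifold consisting of the single point $u_0$. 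The orientability and the sign $\pm1$ then follow from the coherent orientation scheme for the moduli spaces (determinant lines of the linearized operators), the sign being whatever the canonical orientation assigns to the rigid local model; the precise value is immaterial for the statement.

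To conclude that $\mathcal M(\overline w,w)$ is \emph{globally} this one point I would use an action/energy argument: any disk in $\mathcal M(\overline w,w)$ has energy bounded by the action of $\overline w$, which by the action computation and the smallness of $\epsilon$ forces the disk to be confined to an arbitrarily small neighborhood of the handle where the local model applies — a curve escaping the handle would have to pick up definite extra action. A compactness argument (SFT compactness for disks with Lagrangian boundary, using Theorem \ref{t:Reebdyn} to control which chords and orbits can appear as breaking) then shows the whole moduli space is compact of dimension $0$, and monotonicity near $L\cap C$ pins every such disk to the local picture, so it coincides with $u_0$. Part (2), the cyclic-word case $\mathcal M(\overline w^{\circ},w^{\circ})$, is handled the same way: the interior puncture with its asymptotic marker plays the role the positive boundary puncture played in part (1), the marker removing the rotational symmetry of the orbit exactly as in the discussion preceding Theorem \ref{t:anchor}, and the local model of the disk is the corresponding "cyclic" thin disk capping $\overline w^{\circ}$ and limiting onto the chords of $w^{\circ}$ along $\partial D$.

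The main obstacle I anticipate is not the existence of $u_0$ (which is essentially forced by the local model) but the \emph{global uniqueness}: ruling out extra, geometrically large disks in $\mathcal M(\overline w,w)$ and $\mathcal M(\overline w^{\circ},w^{\circ})$. This requires a careful action budget showing that every component of a potential broken configuration is accounted for, together with the monotonicity estimate near the intersection points $L\cap C$ to prevent disks from "spiraling" through the handle more than the word $w$ prescribes. Setting up the functional-analytic model so that the linearized operator genuinely decouples into the explicit strip operators — which is what makes the transversality computation routine rather than a full Fredholm perturbation argument — is the technical heart of the section, and is where the explicit geometry of Sections \ref{S:localmodel}--\ref{s:buildcob'} is indispensable.
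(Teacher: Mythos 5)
Your approach diverges from the paper's in a way that leaves a genuine gap: you propose to build an explicit ``thin'' holomorphic disk $u_0$ directly for an arbitrary word $w=c_1\cdots c_k$ inside the handle model, and to get transversality and uniqueness from a local computation there. This cannot work for $k>1$. The chord $\overline{w}$ of $\Gamma$ traverses the handle $k$ times, but between consecutive passages it follows the chords $c_1,\dots,c_k$ \emph{out in the ambient contact manifold $Y$}, where there is no explicit local model and where the Reeb flow, $J$, and the Lagrangian $L=\Lambda\times\R_-\cup(\text{core})$ are not controlled. A disk in $\mathcal M(\overline{w},w)$ is therefore not confined to a neighborhood of the handle, the linearized operator does not literally decouple into strip operators, and the monotonicity-near-$L\cap C$ argument can at best constrain the portion of the disk near the handle, not the parts living over $Y$. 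The same difficulty afflicts your claim that part (2) is ``handled the same way.''

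The paper's proof deals with this by only constructing the explicit basic strip $S_c$ for a \emph{single} chord $c$, in the cobordism $\hat X_\Lambda(\epsilon;c)$ whose handle and attaching map are specially adapted to $c$ (Lemmas \ref{l:S(c)}, \ref{l:locunique}, \ref{l:unique}); a further cobordism-of-moduli-spaces argument (Lemma \ref{l:onesimple}), controlled by the action gap, is needed even to transport that count of $\pm 1$ to a cobordism adapted to a different chord. The passage to words of length $m>1$ is then an \emph{induction}: one looks at the $1$-dimensional moduli space of disks with two positive punctures at $a_1,a_2$ (using three parallel copies $C_0,C_1,C_2$ of the co-core for transversality) and $m+1$ negative punctures, and reads off the identity $n(a_1,a_2;a)\,n(a;c_1\dots c_{m+1}) = n(a_1;c_1\dots c_l)\,n(a_2;c_{l+1}\dots c_{m+1})$ from its boundary; an action estimate rules out any other degenerations. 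This is how the contributions of the separate handle passages are actually ``multiplied together'' — effectively a Floer gluing statement packaged as a boundary count — and it simultaneously proves statements (a), (c) and, with punctured cylinders in place of disks with two boundary punctures, (b) and (d). Without this inductive gluing/boundary-counting step (or an equivalent global argument), your proposal establishes the theorem only for one-letter words.
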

See Lemma \ref{l:gluLeg} for analogous results for other Legendrians.

\section{Moduli spaces for anchoring}\label{sec:anchor}
The purpose of this section is to prove Theorem \ref{t:anchor}. Before going into the proof we elaborate on the motivating discussion from Section \ref{S:intro}.  

\subsection{Background}\label{s:anchorbackground}
Below we give an account of a perturbation scheme needed to study moduli spaces of holomorphic disks in a (possibly trivial) Weinstein cobordism $W$ with negative contact boundary $Y$, where $Y$ is filled by a Weinstein manifold $X$, \emph{anchored} in $X$. The disks have boundary on a Lagrangian submanifold $L\subset W$, boundary punctures at Lagrangian intersections and at Reeb chords in the positive and negative end, and additional interior negative punctures at Reeb orbits. Geometric conditions (one positive puncture, or boundary punctures going to distinct Lagrangians, etc) ensure that the disks can not be multiply covered. Therefore the corresponding moduli spaces are easily seen to be transversely cut out for generic almost complex structure. 

Anchoring concern attaching holomorphic spheres with one positive puncture at the interior negative punctures of the disks. These spheres live in $X$ and we need a description of their compactified moduli spaces. The moduli spaces of punctured spheres are less well behaved, there are curves that branch cover other curves and one must therefore use abstract perturbations to achieve transversality. By now there are a number of standard approaches \cite{HWZ1,HWZ2,HWZ3,Pardon,FOOO1,FOOO2,FOOO3}. It is clear that any of these perturbation schemes can be adapted to the current setup and give the extension of the theories that are natural from the SFT-perspective: chord algebras with coefficients in the orbit contact homology algebra. 

However, for the purposes of proving the Legendrian surgery formula (and to define Chekanov-Eliashberg algebras for fillable contact manifolds) the full strength of such a result is not needed. Rather, it is sufficient to show that the moduli spaces of holomorphic disks with additional interior negative punctures has a good compactification induced by the moduli space of holomorphic spheres in the symplectic filling $X$. In other words we study the moduli space of holomorphic buildings where the lower levels of the building constitute (possibly still several level) punctured spheres that fill the interior negative punctures in higher levels. There are two simplifications here, first we need not preserve any symmetry properties associated to the Reeb orbits at interior punctures. Second, we can take advantage of the geometry of the situation as follows. The top level is a disk with a distinguished boundary puncture. That puncture induces an asymptotic marker at each interior puncture which then allows us to study a compactification where the lower levels consist of holomorphic spheres with one positive puncture with an asymptotic marker. The marker at the positive puncture removes symmetries from the domain and the perturbed moduli spaces become manifolds rather than orbifolds. (From point of view of the orbit algebra, the second part is related to the non-equivariant orbit algebra, see e.g. \cite{EO}.)  

Since it is shorter and much work in this direction has already been carried out in closely related settings, we will here, rather than referring to general existence theorems for perturbations, simply construct a very specific perturbation in a neighborhood of the moduli space (that depends on the maps and domains) and show directly that it has the required properties. More precisely, we use a Cauchy-Riemann equation with almost complex structure that generally depends on both the domain and the map but which outside a neighborhood of broken curves varies only near the puncture and depends only on the angular coordinate sufficiently near the puncture of the holomorphic spheres. To arrange these perturbations we will stabilize the domains of stable holomorphic spheres with unstable domains. Here we use known asymptotic properties of holomorphic planes to find marked points that map to a neighborhood of the Reeb orbits and are singled out by the asymptotic marker. Our argument uses standard tools from Floer theory e.g. Floer gluing and orientations from determinant bundles that have been studied in detail elsewhere, e.g. \cite{BO,EO, ES}. Results from there are easily adapted to the current setting and the corresponding arguments will not be repeated. 

%

\subsection{Asymptotics of holomorphic curves}
Consider the space $\mathcal{J}$ of almost complex structures $J$ on $X$ which are compatible with the symplectic structure $\omega$ and which in the end $[0,\infty)\times Y$ are translation invariant and preserves the contact planes.

Let $u\colon S\to X$ or $u\colon S\to \R\times Y$ be a punctured holomorphic curve. Consider cylindrical coordinates, $s+it\in [0,\infty)\times S^{1}$, in a neighborhood of a puncture in $S$ where $u$ is asymptotic to a Reeb orbit $\gamma$. There are coordinates $(\theta,x,y)\in S^{1}\times T^{\ast}\R^{n-1}$ on a neighborhood of $\gamma$ in $Y$ and a positive function $f\colon S^{1}\times T^{\ast}\R^{n-1}\to \R$ with $f(\theta,0,0)=T$, where $T$ is the action of $\gamma$, and $df(\theta,0,0)=0$, such that the contact form in the neighborhood is
\[ 
\alpha=f(d\theta - ydx).
\]
Let $0>\lambda_{1}>\lambda_{2}>\dots>\lambda_{k}>\dots$ denote the discrete negative eigenvalues of the asymptotic operator 
\[ 
-J_{0}\partial_t z + A_{\gamma}(t) z,\quad A_{\gamma}(t)=J_0(\mathrm{pr}\circ dX\circ \mathrm{pr}),
\]
where $X$ is the Reeb vector field and $\mathrm{pr}$ is projection to the contact plane along $X$. Writing $z=(x,y)$, the following asymptotic result for holomorphic curves $u\colon \Sigma\to X$ or $u\colon \Sigma\to \R\times Y$ with a positive puncture at $\gamma$ is proved in \cite[Theorem 1.4]{HWZ96}: 

Either $u$ is a branched cover of the trivial cylinder $\R\times\gamma$ or there exists $j$ such that 
\begin{equation}\label{eq:fourierexp} 
z(s,t) = e^{\int_{s'}^{s}l_{j}(s)ds}(\phi_{j}(t)+r(s,t)),
\end{equation}
where $l_{j}(s)\to\lambda_{j}<0$, as $s\to\infty$, $r(s,t)\to 0$ uniformly in all derivatives as $s\to\infty$, and where $\phi_{j}$ is a non-vanishing eigenfunction of the eigenvalue $\lambda_{j}$. We say that $u$ with this property has \emph{asymptotics according to the $j^{\rm th}$ eigenvalue}.

With \eqref{eq:fourierexp} established it is not hard to get more refined information about the asymptotics. We have the following result.
\begin{lma}\label{l:morefourier}
	There exists $\delta>0$ and $c_{k}\in\C^{n-1}$ such that 
	\begin{equation}\label{eq:wholefourierexp} 
	z(s,t) \ = \ \left(1+\mathcal{O}(e^{-\delta s})\right)\sum_{k\ge 1} c_{j} e^{\lambda_{k}s}\phi_{j}(t),
	\end{equation}
	where $c_{1}=\dots=c_{j-1}=0$ if $u$ has asymptotics according to the $j^{\rm th}$ eigenvalue.
\end{lma}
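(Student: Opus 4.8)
The plan is to bootstrap from the first-order asymptotics \eqref{eq:fourierexp} to the full exponential Fourier expansion \eqref{eq:wholefourierexp} by a standard ODE-comparison argument applied to the nonlinear Cauchy-Riemann equation near the puncture. First I would write the equation satisfied by $z(s,t)=(x(s,t),y(s,t))$ in the cylindrical end in the form $\partial_s z = -J_0\partial_t z + A_\gamma(t)z + N(s,t,z)$, where $A_\gamma$ is the asymptotic operator from the excerpt and $N$ collects the nonlinear and $s$-dependent remainder terms; by the exponential decay of $f-T$, $df$ and the uniform-in-all-derivatives convergence $r(s,t)\to 0$ from \eqref{eq:fourierexp}, one gets a quadratic-type estimate $|N(s,t,z)|\le C(|z|^2 + e^{-\kappa s}|z|)$ together with analogous bounds on derivatives. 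I would then expand $z(s,\cdot)$ in the $L^2(S^1)$-orthonormal basis of eigenfunctions $\{\phi_k\}$ of the self-adjoint operator $-J_0\partial_t + A_\gamma$, writing $z(s,t)=\sum_k a_k(s)\phi_k(t)$ (complexifying as needed so that the eigenvalues are exactly the $\lambda_k$).

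The second step is the ODE analysis of the coefficient functions $a_k(s)$. Projecting the equation onto $\phi_k$ gives $a_k'(s) = \lambda_k a_k(s) + \langle N(s,\cdot,z(s,\cdot)),\phi_k\rangle$, a scalar ODE with a small inhomogeneity. Since \eqref{eq:fourierexp} already tells us that $|z(s,\cdot)|_{C^1(S^1)} = O(e^{\lambda_j s})$ when $u$ has asymptotics according to the $j$-th eigenvalue (and in particular $a_k\equiv 0$ is forced only in the weak sense $a_k(s)=o(e^{\lambda_j s})$ for $k<j$ — one must argue these actually vanish, see below), the forcing term is $O(e^{2\lambda_j s} + e^{(\lambda_j-\kappa)s})$. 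Integrating the ODE for $k\ge j$ from a suitable base point, using the negativity of all $\lambda_k$ and the variation-of-constants formula, yields $a_k(s) = c_k e^{\lambda_k s} + O(e^{(\lambda_k-\delta)s})$ for a constant $c_k$ and some $\delta>0$ (taking $\delta<\min\{\kappa,\,|\lambda_j|,\,\lambda_k-\lambda_{k+1}\}$ appropriately, and absorbing the higher modes since $\lambda_k\to-\infty$ makes $\sum_k |a_k(s)|e^{-\lambda_k s}$ well behaved). Summing over $k$ and factoring out the worst error gives precisely \eqref{eq:wholefourierexp}, with $c_1=\dots=c_{j-1}=0$.

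The main obstacle — and the step deserving the most care — is controlling the modes $a_k$ with $k<j$ and, more generally, ruling out spurious growth: one has to show that once $z$ decays like $e^{\lambda_j s}$, none of the faster-decaying modes can secretly be present, and that the sum over the infinitely many eigenmodes converges and can be estimated uniformly. This is the familiar ``the only solutions of $a' = \lambda a + (\text{small})$ that decay are the expected ones'' dichotomy, but here it must be run for all $k$ simultaneously with the nonlinear coupling through $N$, so I would set it up as a contraction/fixed-point argument for the tail $z - \sum_{k<K}a_k\phi_k$ in an exponentially weighted space, or equivalently iterate the a priori decay rate (each pass through the ODE improves the error exponent by $\delta$ until it saturates). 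A secondary technical point is handling the branched-cover alternative: if $u$ is a branched cover of $\R\times\gamma$ then $z\equiv 0$ near the puncture in these coordinates and the statement is vacuous, so we may assume \eqref{eq:fourierexp} holds. All of this is routine given \cite{HWZ96} and the exponential decay of the background data, and the constants $c_k$, $\delta$ depend only on $u$ and the geometry near $\gamma$.
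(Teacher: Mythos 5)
Your proposal is correct in spirit but takes a genuinely different route from the paper. You work mode-by-mode: expand $z(s,\cdot)=\sum_k a_k(s)\phi_k(t)$ in the eigenbasis of the asymptotic operator, project the PDE onto each mode to obtain scalar ODEs $a_k'=\lambda_k a_k + (\text{small forcing})$, and then bootstrap decay rates via variation of constants and a fixed-point/iteration argument for the tail. The paper instead runs a \emph{conjugation} argument: starting from the equation $\bar\partial z + S(s,t)z=0$ of \cite[Eq.\ (33)]{HWZ96} (which already absorbs the nonlinearity into the $s$-dependence of $S$, using the a priori decay from \eqref{eq:fourierexp} to get $|S-S_0|=\mathcal{O}(e^{-\lambda_j s})$), it seeks a matrix-valued $h$ solving $\bar\partial h + (Sh - hS_0)=S_0-S$, so that conjugation by $\mathbf{1}+h$ transforms the variable-coefficient operator into the constant-coefficient one $\bar\partial + S_0$. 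Existence and smallness of $h$ come from Fredholm theory for the $0$-index problem on the cylinder with a negative weight $-\delta$ at $-\infty$ and positive weight $\delta$ at $+\infty$, plus Rellich compactness for the $0$-order perturbation $h\mapsto Sh-hS_0$. The payoff of the paper's approach is that the multiplicative form $\left(1+\mathcal{O}(e^{-\delta s})\right)\sum_k c_k e^{\lambda_k s}\phi_k(t)$ drops out immediately, since the conjugating factor $(\mathbf{1}+h)^{-1}$ is exactly such a multiplicative correction; your approach produces additive per-mode estimates $a_k(s)=c_k e^{\lambda_k s}+\mathcal{O}(e^{(\lambda_k-\delta)s})$, which are compatible but need an extra (routine) recombination step to reach the stated multiplicative form. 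Your approach, on the other hand, is more elementary in not invoking Fredholm/Rellich machinery, at the cost of more bookkeeping in summing infinitely many modes and tracking how the nonlinear coupling feeds between them --- precisely the part you flag as "deserving the most care." Both are legitimate; the conjugation argument is slicker for this particular statement because it produces the exact form claimed in one stroke.

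One small caution on your write-up: you say the higher modes are controlled "since $\lambda_k\to-\infty$ makes $\sum_k|a_k(s)|e^{-\lambda_k s}$ well behaved," but as written $e^{-\lambda_k s}$ grows with $k$ for $s>0$, so the quantity you actually want to bound is the weighted tail $\sum_{k>K}|a_k(s)|$ (or the Sobolev norm of the tail), not that expression. This is a phrasing slip rather than a gap in the idea, but in a careful write-up the weighted norms must be chosen so that the tail contraction closes.
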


\begin{proof}
	As shown in \cite[Equation (33)]{HWZ96} the function $z$ satisfies the differential equation
	\[ 
	\bar\partial z + S(s,t) z= 0
	\]
	on the cylinder $[0,\infty)\times S^{1}$ where $\lim_{s\to \infty}S(s,t)=S_{0}(t)$, so that in these coordinates the asymptotic operator is $i\partial_{t}-S_{0}(t)$. We next show that the operators $\bar\partial + S$ and $\bar\partial + S_{0}$ are conjugate. We conjugate by multiplication by $(\mathbf{1}+h)$ where $h$ is a small smooth $(n-1)\times(n-1)$-matrix-valued function and $\mathbf{1}$ the identity matrix. We get the equation
	\[ 
	(\mathbf{1}+h)^{-1}\left((\mathbf{1}+h)\bar\partial u +\bar\partial(\mathbf{1}+h) u + S(\mathbf{1}+h)u\right)=0
	\]
	or equivalently
	\[ 
	\bar\partial u + \left((\mathbf{1}+h)^{-1}\bar\partial(\mathbf{1}+h) + (\mathbf{1}+h)^{-1}S(\mathbf{1}+h)\right)u = 0.
	\]
	Thus, we look for a matrix valued function $h$ such that
	\begin{equation}\label{eq:conjugation} 
	\bar\partial h + (S h - hS_{0}) = S_{0}-S. 
	\end{equation}
	
	By \ref{eq:fourierexp} we have $|S(s,t)-S_{0}(t)|= \mathcal{O}(e^{-\lambda_{j} s})$.
	By Rellich's theorem the $0$-order operator $h\mapsto Sh-hS_0$ is compact on the Sobolev space $H^{2}$ mapping into $H^{1}$ where we use a small positive weight $\delta$. If we use a small negative weight $-\delta$ at the other infinity of the cylinder $\R\times S^{1}$ then the operator has Fredholm index zero. For trivial perturbation there is no kernel and a unique solution. Writing $\|\cdot\|_{(-\delta,\delta)}$ for the weighted norm, this means that there is a constant $C>0$ such that
	\[ 
	\|h\|_{H^{2}(-\delta,\delta)}\le C\|\bar\partial h\|_{H^{1}(-\delta,\delta)}.
	\] 
	This persists for small perturbation. Using a cut-off function at $s_{0}$, the operator $h\mapsto Sh-hS_0$ has norm $\mathcal{O}(e^{-\delta s_{0}})$ and thus we find a unique solution $h$ of \eqref{eq:conjugation} with
	\[ 
	\|h\|_{H^{2}(-\delta,\delta)}=\mathcal{O}(e^{-\delta s_{0}}).
	\]
	
	It follows that our operators are asymptotically conjugate. Solutions of the standard operator are
	\[ 
	\sum_{j} c_{j}e^{\lambda_{j}s}\phi_{j}(t)
	\]
	and we find that solutions of the actual equation are obtained from them by multiplication by $(\mathbf{1}+h)^{-1}$. It follows by taking $s_{0}\to\infty$ above that that the solutions have asymptotics according to 
	\[ 
	\left(1+\mathcal{O}(e^{-\delta s})\right)\sum_{j} c_{j}e^{\lambda_{j}s}\phi_{j}(t)
	\]
	as claimed.
\end{proof}

\subsection{Asymptotics and stabilizing marked points}
Fix an almost complex structure $J\in\mathcal{J}$. Let $\mathcal{M}$ denote one of two types of moduli spaces:
\begin{itemize}
	\item The moduli space of once punctured holomorphic spheres $u\colon S\to X$ with an asymptotic marker at the positive puncture.
	\item The moduli space of holomorphic spheres with one positive and possibly several negative punctures $u\colon S\to \R\times Y$ and an asymptotic marker at the positive puncture, such that the action of the positive orbit is strictly larger than the sum of the actions of the negative orbits.
\end{itemize}

SFT-compactness \cite{BEHWZ} says that $\mathcal{M}$ is generally not compact but has a natural compactification consisting of several level holomorphic buildings. Here a holomorphic building consist of several levels, where each level is a collection of punctured holomorphic spheres, all with one positive puncture with an asymptotic marker. The levels are joined at Reeb orbits and images of markers at matching punctures agree. Write $\overline{\mathcal{M}}$ for the compactification of $\mathcal{M}$ and let $\mathcal{V}\subset \overline{\mathcal{M}}$ be an open neighborhood of the boundary of $\mathcal{M}$. Then $\mathcal{M}\setminus \mathcal{V}$ is compact.

\begin{lma}\label{l:decaycontrol}
	There exists $m=m(\mathcal{V})<\infty$ such that any curve in $\mathcal{M}\setminus \mathcal{V}$ has leading asymptotics according to the $k^{\rm th}$ eigenvalue for $k\le m$.
\end{lma}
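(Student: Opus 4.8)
The plan is to argue by contradiction, using compactness of $\mathcal{M}\setminus\mathcal{V}$ together with a continuity statement for the leading Fourier data at the positive puncture. Note first that no $u\in\mathcal{M}$ is a branched cover of a trivial cylinder: in the first case $X$ contains no trivial cylinder, and a once-punctured sphere whose image lay in a half-cylinder $[0,\infty)\times\gamma_{0}$ would extend to a holomorphic map $S^{2}\to\C$ with bounded image, hence be constant, contradicting that it is asymptotic to a Reeb orbit; in the second case a branched cover of $\R\times\gamma_{0}$ has, since it has a single positive puncture, the action of its positive orbit equal to the sum of the actions of its negative orbits, contradicting the strict inequality in the definition of $\mathcal{M}$. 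Therefore, by Lemma \ref{l:morefourier}, every $u\in\mathcal{M}$ has a well-defined leading eigenvalue index $j(u)\in\{1,2,\dots\}$ at its positive puncture, characterised by $c_{1}(u)=\dots=c_{j(u)-1}(u)=0\neq c_{j(u)}(u)$ in the expansion \eqref{eq:wholefourierexp}, and the lemma is exactly the assertion that $j$ is bounded on $\mathcal{M}\setminus\mathcal{V}$.

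The main step is to show that for each fixed $k$ the map $u\mapsto c_{k}(u)$ is continuous on $\mathcal{M}$, so that $\{u\in\mathcal{M}:c_{k}(u)=0\}$ is closed. Fix cylindrical coordinates $[s_{0},\infty)\times S^{1}$ near the positive puncture and the coordinates around $\gamma$ from the previous subsection, and write the contact-plane component of $u$ as $z_{u}$, so $\bar\partial z_{u}+S_{u}z_{u}=0$ with $S_{u}(s,t)\to S_{0}(t)$. Since curves in the compact set $\mathcal{M}\setminus\mathcal{V}$ converge to their asymptotic orbits with a uniform exponential rate (no breaking occurs away from $\mathcal{V}$, so this is SFT compactness of \cite{BEHWZ} together with the asymptotic analysis of \cite{HWZ96}), one may fix $s_{0}$ once and for all so large that the operator $h\mapsto S_{u}h-hS_{0}$ appearing in \eqref{eq:conjugation} has norm $<\tfrac{1}{2}$ for all such $u$; the argument in the proof of Lemma \ref{l:morefourier} then produces a small matrix-valued $h_{u}$, depending continuously on $S_{u}$ and hence on $u$, with $\zeta_{u}:=(\1+h_{u})z_{u}$ solving the constant-coefficient equation $\bar\partial\zeta_{u}+S_{0}\zeta_{u}=0$. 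For the latter equation $\langle\zeta_{u}(s,\cdot),\phi_{k}\rangle_{L^{2}(S^{1})}=c_{k}(u)e^{\lambda_{k}s}$ is independent of $s$, so
\[
c_{k}(u)=e^{-\lambda_{k}s_{0}}\bigl\langle(\1+h_{u}(s_{0}))\,z_{u}(s_{0},\cdot),\,\phi_{k}\bigr\rangle_{L^{2}(S^{1})},
\]
and continuity of $c_{k}$ follows, since $u_{n}\to u_{\infty}$ in $\mathcal{M}$ forces $z_{u_{n}}(s_{0},\cdot)\to z_{u_{\infty}}(s_{0},\cdot)$ and $h_{u_{n}}(s_{0})\to h_{u_{\infty}}(s_{0})$.

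Granting this, suppose the lemma fails. Then for every $m$ the set $F_{m}:=(\mathcal{M}\setminus\mathcal{V})\cap\bigcap_{k=1}^{m}\{c_{k}=0\}=\{u\in\mathcal{M}\setminus\mathcal{V}:j(u)>m\}$ is a non-empty closed subset of the compact space $\mathcal{M}\setminus\mathcal{V}$, and the $F_{m}$ decrease with $m$; by the finite intersection property there is $u_{\ast}\in\bigcap_{m}F_{m}$, i.e.\ $c_{k}(u_{\ast})=0$ for all $k$. By Lemma \ref{l:morefourier} this gives $z_{u_{\ast}}\equiv0$ near the positive puncture, so $u_{\ast}$ maps a neighbourhood of that puncture into the trivial cylinder and hence, by unique continuation, is globally a branched cover of a trivial cylinder (or constant) --- which was excluded in the first paragraph. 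This contradiction proves the lemma, with $m(\mathcal{V})=\max_{u\in\mathcal{M}\setminus\mathcal{V}}j(u)$. The step I expect to be the real obstacle is the continuity of $c_{k}$: it hinges on producing the conjugating function $h_{u}$ of Lemma \ref{l:morefourier} with constants uniform over $\mathcal{M}\setminus\mathcal{V}$ and depending continuously on $u$, which in turn rests on the uniform exponential convergence of the curves in $\mathcal{M}\setminus\mathcal{V}$ to their asymptotic Reeb orbits, that is, on the precise form of SFT compactness in the absence of breaking combined with the asymptotics of \cite{HWZ96}.
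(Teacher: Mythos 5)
Your proof is correct and follows the same route as the paper's: argue by contradiction from compactness of $\mathcal{M}\setminus\mathcal{V}$, showing the limit curve would have all leading Fourier coefficients zero and hence be a branched cover of a trivial cylinder, which is impossible for a curve in $\mathcal{M}$. What you add, and the paper leaves tacit, is the argument that the coefficients $c_k$ depend continuously on $u$ (via the conjugating matrix $h_u$), which is precisely what justifies the claim that the limit ``has trivial leading asymptotics''; you also make explicit the exclusion of branched covers from $\mathcal{M}$ that the paper cites only implicitly by placing such a curve in $\partial\mathcal{M}$.
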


\begin{proof}
	This is a straightforward consequence of SFT-compactness. Assume not, then there exists a sequence $u_{j}\in \mathcal{M}\setminus\mathcal{V}$ that by compactness converges to a curve in $u\in \mathcal{M}\setminus\mathcal{V}$ with trivial leading asymptotics. By the asymptotic properties of holomorphic curves \eqref{eq:fourierexp} $u$ contains a branched covered trivial cylinder and therefore lies in the boundary of $\mathcal{M}$. This contradicts $u\notin\mathcal{V}$. The lemma follows.
\end{proof}

Lemma \ref{l:decaycontrol} gives the following decomposition of $\mathcal{M}\setminus\mathcal{V}$. Let $\mathcal{M}^{k}_{\mathcal{V}}$ denote the subset of holomorphic curves in $\mathcal{M}\setminus\mathcal{V}$ with leading asymptotics according to the $j^{\rm th}$ eigenvalue for $j\ge k$. Then $\mathcal{M}^{k}_{\mathcal{V}}$ is a closed subset for each $k$ and we have
\[ 
\mathcal{M}\setminus\mathcal{V} \ = \ \mathcal{M}^{1}_{\mathcal{V}}\supset\mathcal{M}^{2}_{\mathcal{V}}
\supset\dots\supset \mathcal{M}^{m}_{\mathcal{V}}.
\]

We next explain how we stabilize domains of punctured holomorphic spheres and cylinders.

Let $\gamma$ be a Reeb orbit with marker and as above, let $\mathcal{M}(\gamma)$ be the moduli space of punctured holomorphic spheres with one positive puncture at $\gamma$ and an asymptotic marker.

Fix a tubular neighborhood $N(\gamma;r)\approx \gamma\times D(r)$ of $\gamma$ in $Y$, where $D(r)$ is the $r$-disk in $\C^{n-1}$. Let $D_{u(1)}(r)$ denote the fiber disk at the image of the marker $u(1)$. Then, by \eqref{eq:fourierexp}, if $u$ has leading asymptotics according to the $k^{\rm th}$ eigenvalue then, for all sufficiently small $r$, the preimage $u^{-1}(D_{u(1)}(r))$ consists of $k$ arcs, exactly one of which is asymptotic to the marker at infinity. We say that fiber disks with such preimage are \emph{in good position with respect to $u$}. 

Let $\mathcal{V}$ be a neighborhood of the boundary of $\mathcal{M}$ as in Lemma \ref{l:decaycontrol}.

\begin{lma}\label{l:opencover}
	There is an open cover 
	\[ 
	\mathcal{M}\setminus\mathcal{V} \ \subset \ \mathcal{U}^{1}\cup\dots\cup \mathcal{U}^{m},
	\]
	and radii $r_{m}>r_{m-1}>\dots>r_{1}>0$ with the following properties. 
	\begin{itemize}
		\item $\mathcal{M}_{\mathcal{V}}^{j}\subset \bigcup_{k\ge j}\mathcal{U}^{k}$ for all $j$
		\item $D_{r_{j}}$ is in good position with respect to all maps in $\mathcal{U}^{j}$.
	\end{itemize}
\end{lma}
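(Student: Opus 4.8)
The plan is a finite downward induction on $k=m,m-1,\dots,1$ that produces the open set $\mathcal{U}^{k}$ together with the radius $r_{k}$ one stratum at a time, starting from the deepest. The first point to settle is that the leading--eigenvalue index is \emph{locally constant} on the compact set $\mathcal{M}\setminus\mathcal{V}$, and this is proved by the argument of Lemma~\ref{l:decaycontrol}: if a sequence $u_{j}\in\mathcal{M}\setminus\mathcal{V}$ with leading asymptotics according to the $k^{\mathrm{th}}$ eigenvalue converged to $u_{0}\in\mathcal{M}\setminus\mathcal{V}$ with leading asymptotics according to a strictly higher eigenvalue, then by \eqref{eq:wholefourierexp} the coefficient $c_{k}(u_{j})\to 0$ while the tail coefficients stay bounded; rescaling the cylindrical end of $u_{j}$ near the puncture so that the $k^{\mathrm{th}}$ Fourier mode acquires unit size forces a nonconstant piece to split off along the trivial cylinder over $\gamma$, so $u_{0}$ would lie in the boundary of $\mathcal{M}$, contradicting $u_{0}\in\mathcal{M}\setminus\mathcal{V}$. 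Hence each $\mathcal{M}^{k}_{\mathcal{V}}$ is open as well as closed in $\mathcal{M}\setminus\mathcal{V}$, the pieces $\mathcal{M}^{k}_{\mathcal{V}}\setminus\mathcal{M}^{k+1}_{\mathcal{V}}=\{u:u\text{ has leading asymptotics exactly according to the }k^{\mathrm{th}}\text{ eigenvalue}\}$ are compact, and they may be treated separately.

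For the inductive step, suppose $r_{m}>\dots>r_{k+1}>0$ and open sets $\mathcal{U}^{k+1},\dots,\mathcal{U}^{m}$ have been constructed with $D_{r_{j}}$ in good position with respect to every map in $\mathcal{U}^{j}$ for $j>k$ and $\mathcal{M}^{k+1}_{\mathcal{V}}\subset\mathcal{U}^{k+1}\cup\dots\cup\mathcal{U}^{m}$ (for $k=m$ this is vacuous, with the convention $r_{m+1}=\infty$ and $\mathcal{M}^{m+1}_{\mathcal{V}}=\varnothing$). Set $K:=\mathcal{M}^{k}_{\mathcal{V}}\setminus(\mathcal{U}^{k+1}\cup\dots\cup\mathcal{U}^{m})$. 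Then $K$ is compact, and since $\mathcal{M}^{k+1}_{\mathcal{V}}$ is already covered, $K$ is disjoint from $\mathcal{M}^{k+1}_{\mathcal{V}}$; as $K\subset\mathcal{M}^{k}_{\mathcal{V}}$ this forces $K\subset\mathcal{M}^{k}_{\mathcal{V}}\setminus\mathcal{M}^{k+1}_{\mathcal{V}}$, i.e.\ every curve in $K$ has leading asymptotics exactly according to the $k^{\mathrm{th}}$ eigenvalue. By \eqref{eq:fourierexp}, for each $u\in K$ there is $\rho(u)>0$ so that for all $r\le\rho(u)$ the preimage $u^{-1}(D_{u(1)}(r))$ consists of exactly $k$ embedded arcs, one of them asymptotic to the marker; that is, $D_{u(1)}(r)$ is in good position with respect to $u$. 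We claim $\rho$ is bounded below on a neighbourhood of $K$; granting this, pick $r_{k}\in(0,r_{k+1})$ smaller than that lower bound, let $\mathcal{U}^{k}$ be the set of $u\in\mathcal{M}\setminus\mathcal{V}$ for which $D_{r_{k}}$ is in good position, and note that $\mathcal{U}^{k}$ is open and contains $K$, so $\mathcal{M}^{k}_{\mathcal{V}}\subset\mathcal{U}^{k}\cup\mathcal{U}^{k+1}\cup\dots\cup\mathcal{U}^{m}$. Iterating down to $k=1$ yields $\mathcal{M}\setminus\mathcal{V}=\mathcal{M}^{1}_{\mathcal{V}}\subset\mathcal{U}^{1}\cup\dots\cup\mathcal{U}^{m}$, the two bulleted properties, and $r_{m}>\dots>r_{1}>0$.

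Two analytic inputs feed the induction, and the second is where the work lies. First, for fixed generic $r>0$, ``$D_{r}$ is in good position with respect to $u$'' is an open condition in $u\in\mathcal{M}\setminus\mathcal{V}$: near the puncture the set $u^{-1}(D_{u(1)}(r))$ is cut out by equations in the Fourier data of $u$ that are transverse for generic $r$ by Lemma~\ref{l:morefourier} and that depend on $u$ only through its $C^{\infty}_{\mathrm{loc}}$ behaviour on a compact part of the domain together with the exponentially controlled asymptotics; since the index is locally constant, both the arc count $k$ and ``exactly one arc asymptotic to the marker'' persist under the $C^{\infty}_{\mathrm{loc}}$-plus-asymptotic perturbations defining the topology of $\mathcal{M}$. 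Second, the uniform lower bound on $\rho$ near $K$: because $K$ is compact and, by local constancy of the index, every curve in a small enough neighbourhood of $K$ still has leading asymptotics according to the $k^{\mathrm{th}}$ eigenvalue, the refined expansion \eqref{eq:wholefourierexp} — with $c_{1}=\dots=c_{k-1}=0$, $c_{k}$ bounded away from $0$, and the tail coefficients bounded on that neighbourhood — gives a crossover radius, below which the $k^{\mathrm{th}}$ mode dominates and hence the preimage has the required $k$-arc structure, that is bounded below. The main obstacle is precisely this last step: absent local constancy of the index, $\rho$ would collapse to $0$ as one approached a lower-codimension stratum (where the $k^{\mathrm{th}}$ coefficient of a nearby deeper-index curve becomes small) and no such cover could exist; the soft-rescaling argument of the first paragraph is exactly what excludes this. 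The remaining verifications — transversality of generic fiber disks and the set-theoretic bookkeeping above — are routine.
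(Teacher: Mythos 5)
Your overall strategy --- a downward induction on the leading--eigenvalue index, producing $\mathcal{U}^{k}$ and $r_{k}$ one stratum at a time starting from the deepest, with the crucial compactness bookkeeping isolated to the set $K=\mathcal{M}^{k}_{\mathcal{V}}\setminus(\mathcal{U}^{k+1}\cup\dots\cup\mathcal{U}^{m})$ --- is the same as the paper's (which is stated very tersely) and your second paragraph carries it out correctly. The problem is the first paragraph: the claim that the leading--eigenvalue index is \emph{locally constant} on $\mathcal{M}\setminus\mathcal{V}$ is false, and the rescaling argument offered for it does not work. If $c_{k}(u_{j})\to 0$ while $u_{j}\to u_{0}$ in $\mathcal{M}\setminus\mathcal{V}$ and the higher coefficients stay bounded, no translation of the cylindrical end makes the $k^{\mathrm{th}}$ mode acquire unit size: $|c_{k}e^{\lambda_{k}s}\phi_{k}(t)|\le |c_{k}|$ on $[0,\infty)\times S^{1}$ because $\lambda_{k}<0$, and the radius at which that mode first dominates the $(k{+}1)^{\rm st}$ one, of order $|c_{k}|^{1-\alpha}|c_{k+1}|^{\alpha}$ with $\alpha=\lambda_{k}/(\lambda_{k}-\lambda_{k+1})<0$, tends to $0$ with $c_{k}$. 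Correspondingly the $d\alpha$--energy carried by the $\phi_{k}$--term is $\Ordo(|c_{k}|^{2})$ and vanishes in the limit, so there is nothing to split off along the trivial cylinder; $u_{0}$ is a genuine one--level curve with $c_{k}(u_{0})=0$ and leading index $>k$. Thus $\mathcal{M}^{k}_{\mathcal{V}}$ is closed but not open, precisely as the paper asserts, and your ``rescale and break'' step fails.

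Fortunately local constancy is not needed, and your own third paragraph already contains the correct replacement. In the inductive step you only use it to conclude $c_{1}=\dots=c_{k-1}=0$ on a neighbourhood of $K$, but the weaker fact suffices: on the compact set $K$ every curve has leading index exactly $k$, so by continuity of the asymptotic coefficients and compactness $c_{k}$ is bounded away from $0$ on $K$ and the tail is bounded; fix $r_{k}<r_{k+1}$ so that $D_{r_{k}}$ is in good position for all $u\in K$, with the tail crossover uniformly below $r_{k}/2$. Then shrink the neighbourhood $\mathcal{U}^{k}$ of $K$ so that $c_{k}$ stays bounded away from zero and the lower coefficients $c_{1},\dots,c_{k-1}$ (which vanish on $K$) are small enough that the radius where the $j^{\mathrm{th}}$ mode ($j<k$) takes over --- which, as you observe, tends to $0$ with $|c_{j}|$ --- also lies below $r_{k}/2$. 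With this substitution your argument is correct and coincides with the paper's.
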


\begin{proof}
	We use Lemma \ref{l:morefourier}.
	For $r_{m}$ this is obvious from the asymptotics. For a curve $u$ in $\mathcal{U}_{m-1}\cap \mathcal{U}_{m}$ we note that since $c_{m-1}$ in \eqref{eq:wholefourierexp} is bounded away from zero there is $r_{m-1}< r_{m}$ such that the $\phi_{m-1}$-term  in the expansion dominates inside $D(r_{m-1})$. The argument continues inductively, see Figure \ref{fig:comparison}. 
\end{proof}

\begin{figure}[ht]
	\centering
	\includegraphics[width=.4\linewidth]{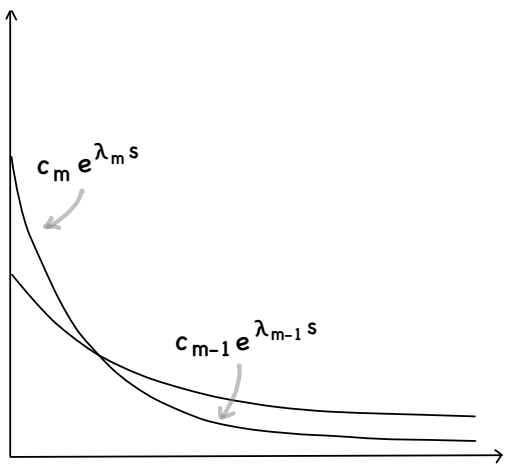}
	\caption{If $c_{m-1}$ is bounded away from zero, the $(m-1)^{\rm th}$ term eventually dominates the $m^{\rm th}$ term.}
	\label{fig:comparison}
\end{figure}

Using the open cover, we define a suitable functional analytic neighborhood of the moduli space. Consider a holomorphic sphere in $\mathcal{U}_{k}$. We fix the domain of $u\colon\C\to X$ or $u\colon\C\to\R\times Y$ as follows. Consider the arc $u^{-1}(D_{u(1)}(r_{k}))$ that is asymptotic to the marker at infinity. Let $\zeta_{0}=u^{-1}\left(\partial D_{u(1)}(r_{k})\right)$ and $\zeta_{1}=u^{-1}\left(\partial D_{u(1)}(r_{k}/2)\right)$. The automorphisms of the domain of $u$ has the form $\zeta\mapsto k(\zeta-b)$, where $k$ is a positive real number and $b$ a complex number. We fix parametrization of $u$ by requiring that $\zeta_{0}=1$ and that $\zeta_{1}$ lies on the circle of radius $2$.

With this parametrization fixed we consider the Sobolev space of maps with two derivatives in $L^{2}$ and small positive exponential weight near the puncture. Here we also require that the maps take the marked points to $D_{u(1)}(r_{k})$ and $D_{u(1)}(r_{k}/2)$, respectively. This gives a functional analytic neighborhood $\mathcal{V}_{k}$ of $\mathcal{U}_{k}$. We need to consider transition maps for $\mathcal{V}_{k}\cap\mathcal{V}_{j}$. Since the closures of $\mathcal{U}_{j}$ are compact it follows that the marked points lie at bounded distance and it is straightforward, using the expansion in Lemma \ref{l:morefourier}, to show that the corresponding coordinate changes on the functional analytic spaces are smooth, compare \cite[Section 4.3]{ES}. 

We parameterize cylinders by the punctured plane $\C^{\ast}$ and use only the first marked point. For curves in the symplectization we use the $\R$-coordinate at the marked point to fix the $\R$-translation.

\subsection{The perturbation scheme}\label{s:pertscheme}
We will use a rather concrete perturbation in the neighborhood of the moduli space constructed above. 

\begin{rmk}
	The resulting perturbation scheme is similar to perturbation data for M-polyfolds (the perturbation depends both on domain and the map). Instead of picking an abstract $\mathrm{sc}^{+}$-section we will rather perturb the equation to an equation with domain dependent almost complex structure and show that this suffices for transversality and gives a perturbed moduli space as a manifold with boundary with corners. 
\end{rmk}

Our construction is inductive in action of the Reeb orbit at the positive puncture and in Hofer energy. 
The first step is trivial:
\subsubsection*{Step 1}
Consider the Reeb orbit $\gamma$ of lowest action. The moduli space of marked holomorphic planes with positive puncture at $\gamma$ is compact. There cannot be any bubbling since bubbles would have smaller action. Furthermore, since the Reeb orbit is simple, any curve with asymptotics at this orbit has an injective point and we get transversality by perturbing $J$. 

\subsubsection*{Inductive step}
Consider an $s$-fold covered Reeb orbit $\gamma^{(s)}$.
We assume inductively that we have found a perturbation scheme for all spheres with positive puncture at Reeb orbits of action less than $\int_{\gamma^{(s)}}\alpha$ and we extend the perturbation scheme to curves with positive puncture at $\gamma^{(s)}$. The perturbation is inductively assumed to have the following properties:
\begin{itemize}
	\item For each perturbed moduli space $\mathcal{M}(\beta)$ there is a $\delta$-neighborhood of the boundary outside of which the perturbed equation agrees with the standard Cauchy-Riemann equation except in cylindrical neighborhoods of the punctures where the equation is domain dependent in such a way that it depends only on the angular coordinate sufficiently close to the puncture. 
	\item In a neighborhood of the boundary the equation is obtained by gluing the equations of the curves in the levels of the building at the boundary.
\end{itemize}   

Consider now the Reeb orbit $\gamma^{(s)}$. Curves of zero action and positive puncture at $\gamma^{(s)}$ are degree $s$ branched covers of the trivial cylinder over $\gamma$, the simple Reeb orbit underlying $\gamma^{(s)}$. To define a perturbation near these branched covers we follow \cite{EO}. 
As in \cite[Section 2.1]{EO}, consider the Deligne-Mumford space of punctured spheres with a distinguished positive puncture and a marker at this puncture. We consider a section of cylindrical ends in the domains and fix a splitting compatible section of complex structures in the contact planes that depends on the $S^{1}$-coordinate in the ends. We turn off the perturbation outside the cylindrical ends. Splitting compatibility means that in a neighborhood of broken curves the almost complex structure depends on the domain in the stretching neck. With these choices we get a domain dependent Cauchy-Riemann equation and the standard argument gives transversality. 

After branched covers of trivial cylinders have been dealt with we consider broken curves at the boundary of the moduli space. By induction and the above construction for branched covers of trivial cylinders, we have compatible perturbation data for all curves in the boundary and by Floer gluing we get solutions to the glued equation for curves in a $\delta$-neighborhood $\mathcal{V}'$ of the boundary of $\mathcal{M}=\mathcal{M}(\gamma^{(s)})$. We point out that all curves glued at this point are transversely cut out: the standard Floer-Picard argument applies. (A much more involved version of the gluing needed in this case can be found in \cite[Section 4.3]{BO}. It is straightforward to adapt he details there to the current case.)

We continue our construction and define the perturbation over all of $\mathcal{M}(\gamma^{(s)})$. We use the cover $\mathcal{U}_{j}$ of Lemma \ref{l:opencover} corresponding to a $\frac14\delta$-neighborhood of the boundary of $\mathcal{M}$. Start at the maximal $j=m$ and pick a domain dependent perturbation for transversality. For curves in $\mathcal{U}_{j-1}\cap \mathcal{U}_{j}$ we then have a perturbation defined. After changing coordinates we extend it to the rest of $\mathcal{U}_{j-1}$. By induction we then have perturbation data over $\mathcal{M}\setminus \mathcal{V}$. Finally, we interpolate between the perturbation data in the $\delta$-neighborhood $\mathcal{V}'$ induced from the boundary perturbation and the perturbation in $\mathcal{V}'\setminus \mathcal{V}$ induced from the perturbation over $\mathcal{M}\setminus\mathcal{V}$ in such a way as to keep the domain dependent almost complex structures near the punctures. For such domain dependent almost complex structures transversality follows from standard arguments by perturbing near the puncture.

\begin{proof}[Proof of Theorem \ref{t:anchor}]
The perturbation scheme described gives a manifold with boundary with corners in a neighborhood of the moduli space in the space of maps (with stabilization as described). By construction, this manifold is transversely cut out and satisfies SFT compactness. The corner structure is inherited from the gluing parameters, compare \cite[Section 6.4 -- 6.6]{ES}. Orientation is induced from the index bundle in the standard way, see e.g., \cite[Appendix: Determinant bundles and signs]{EO}.       
\end{proof}

\section{Local models}\label{S:localmodel}
Our study of symplectic handle attachment and associated cobordisms below are all based on rather detailed local models for handles. In this section we give the details of these local models themselves that are neighborhoods of index $n$ critical points of a Morse function on $\C^{n}$.

\subsection{Notation for vectors and forms}
Consider $\R^{2n}\approx\C^n$ with coordinates $(x,y)=x+iy$, $x,y\in\R^n$. If $u=(u_1,\dots,u_n)$ are coordinates on $\R^n$ then we write
\[
du=(du_1,\dots,du_n)\in((\R^n)^\ast)^n,
\]
and think of it as a vector of cotangent vectors, we also write
\[
\pa_u=(\pa_{u_1},\dots,\pa_{u_n})\in(\R^n)^n,
\]
and think of it as a vector of tangent vectors. If $u,v\in\R^n$ then we write
\[
u\cdot v=\sum_{j=1}^n u_jv_j
\]
and use the abbreviation $u^2=u\cdot u$. Further we denote the norm by $|u|=\sqrt{u^{2}}$. Similarly, if $u\in\R^n$ and $\eta=(\eta_1,\dots,\eta_n)$ is a vector of cotangent- or tangent vectors we write
\[
u\cdot\eta=\sum_{j=1}^n u_j\eta_j,
\]
and finally if $\xi=(\xi_1,\dots,\xi_n)$ and $\eta=(\eta_1,\dots,\eta_n)$ are vectors of cotangent vectors we write
\[
\xi\wedge\eta = \sum_{j=1}^n \xi_j\wedge\eta_j.
\]

Below we will often use a splitting of $\C^{n}=\C\times\C^{n-1}$. We write $x+iy=(x_1+iy_1)+(x_2+iy_2)\in\C\times\C^{n-1}$ and employ the above conventions for vectors and covectors of $\C^{n-1}$.

\subsection{Building blocks for the handle}
In this section, we first describe basic handles which we will use to study Reeb dynamics, then  flattened handles. Flattened handles are small deformations of basic handles that make it easier for us to construct and control certain holomorphic curves.

\subsubsection{Basic handles}
Let $\epsilon>0$, let $p\gg 0$ be an integer, and let $s$ be an integer with $\frac{1}{10}p< s<\frac{1}{5}p$. (Below we will take $\epsilon\to 0$.) Define the hyper-surfaces $V_{\pm\epsilon}\subset\C^n$ as
\begin{equation}\label{e:dfnV}
V_{\pm\epsilon}=\bigl\{(x,y)\in\C^n\colon 2 x_1^2-y_1^2+\epsilon^{2s}(2x_2^{2}-y_2^{2})=\pm\epsilon^{2p}\bigr\}.
\end{equation}
and the handle $H_{\epsilon}\subset \C^{n}$ as the region bounded by $V_{\pm\epsilon}$.
We introduce the following subsets of $\R^{n}\cap W_{\epsilon}$ and of $i\R^{n}\cap W_{\epsilon}$:
\begin{align}\label{eq:corenotation}
D_x(r)&=\bigl\{x\in\R^{n}\colon x_1^{2}+\epsilon^{2s}x_2^{2}\le r^{2}\bigr\},\\\notag
S_x(r)&=\bigl\{x\in\R^{n}\colon x_1^{2}+\epsilon^{2s}x_2^{2}= r^{2}\bigr\},\\\notag
D_y(r)&=\bigl\{y\in i\R^{n}\colon y_1^{2}+\epsilon^{2s}y_2^{2}\le r^{2}\bigr\},\\\notag
S_y(r)&=\bigl\{y\in i\R^{n}\colon y_1^{2}+\epsilon^{2s}y_2^{2}= r^{2}\bigr\}.
\end{align}
Here $D_{y}(\epsilon^{p})$ and $S_{y}(\epsilon^{p})$ are the {\em core disk} and the {\em core sphere} of $H_{\epsilon}$, respectively, and $D_{x}(\epsilon^{p})$ and $S_{x}(\epsilon^{p})$ are the {\em co-core disk} and the {\em co-core sphere} of $H_{\epsilon}$, respectively. See Figure \ref{fig:basic}.

\begin{figure}[ht]
\centering
\includegraphics[width=.55\linewidth]{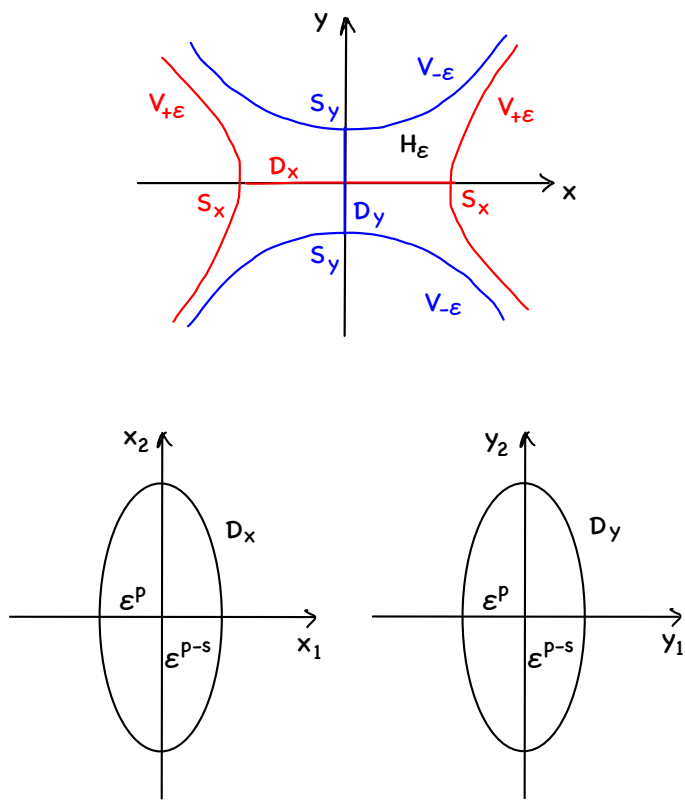}
\caption{The top figure shows the handle $H_{\epsilon}$, its boundary, and the core and co-core disks and spheres. The bottom figure shows more detailed pictures of the core and co-core disks.}
\label{fig:basic}
\end{figure}

\subsubsection{Liouville- and Reeb vector fields}
The vector field
\begin{equation}\label{e:LiouvilleV}
v=2 x\cdot\pa_x-y\cdot\pa_y
\end{equation}
is a Liouville vector field of the standard symplectic form $\omega_{\rm st}=dx\wedge dy$ on $\C^{n}$: the Lie derivative $L_{v}$ in direction $v$ satisfies
\[
L_{v}\omega_{\rm st}=d\bigl(\iota_{v}(dx\wedge dy)\bigr)=\omega_{\rm st}.
\]
Consider the normal vector field
\[
n=2x_1\pa_{x_1}-y_{1}\pa_{y_1}+\epsilon^{2s}(2x_2\cdot\pa_{x_2}-y_{2}\cdot\pa_{y_2})             \]
of $V_{\pm\epsilon}$. Since
\[
n\cdot v=4x_{1}^{2}+y_{1}^{2}+\epsilon^{2s}(4x_{2}^{2}+y_{2}^{2})>0,
\]
$v$ is transverse to $V_{\pm\epsilon}$ and
\[
\alpha=\iota_{v}(\omega_{\rm st})=2x\cdot dy + y\cdot dx
\]
is a contact form on $V_{\pm\epsilon}$. The Reeb vector field of $\alpha$ on $V_{\pm\epsilon}$ is
\begin{equation}\label{e:ReebV}
R_{\alpha}=N(x,y)\left(2x_1\pa_{y_1}+y_1\pa_{x_1} + \epsilon^{2s}(2x_2\cdot\pa_{y_2}+y_2\cdot\pa_{x_2})\right),
\end{equation}
where the normalization factor $N(x,y)$ satisfies
\begin{equation}\label{e:ReebVnormaliz}
(N(x,y))^{-1}=4x_1^{2}+y_1^{2}+\epsilon^{2s}(4x_2^{2}+y_2^{2}).
\end{equation}

\subsection{The Reeb flow on $V_{\pm\epsilon}$}
In this section we describe the Reeb flow on the boundary of basic handles. Changing the speed of the Reeb vector field $R_{\alpha}$ in \eqref{e:ReebV}, its flow is given by the solution to the linear differential equation
\[
\begin{cases}
\dot x_1 = y_1,\\
\dot y_1 = 2x_1,\\
\dot x_2 = \epsilon^{2s}y_2,\\
\dot y_2 = 2\epsilon^{2s}x_2.
\end{cases}
\]
which, for initial condition $(x(0),y(0))$, is
\begin{equation}\label{e:Reebflow}
\begin{cases}
x_1(t) = \cosh(\sqrt{2}t)\, x_1(0) + \frac{1}{\sqrt{2}}\sinh(\sqrt{2}t)\,y_1(0),\\
y_1(t) = \sqrt{2}\sinh(\sqrt{2}t)\, x_1(0) + \cosh(\sqrt{2}t)\, y_1(0),\\
x_2(t) = \cosh(\sqrt{2}\epsilon^{2s} t)\, x_2(0) + \frac{1}{\sqrt{2}}\sinh(\sqrt{2}\epsilon^{2s}t)\,y_2(0),\\
y_2(t) = \sqrt{2}\sinh(\sqrt{2}\epsilon^{2s}t)\, x_2(0) + \cosh(\sqrt{2}\epsilon^{2s}t)\, y_2(0).
\end{cases}
\end{equation}

Consider
\[
\R^{n}\times D_{y}(\epsilon^{s+1})\subset\R^{n}\times i\R^{n}=\C^{n}.
\]
The intersection
\[
N_{+\epsilon}(\epsilon^{s+1})=
(\R^{n}\times D_{y}(\epsilon^{s+1}))\cap V_{+\epsilon}
\]
is a tubular neighborhood of the Legendrian sphere $S_{x}(\tfrac{1}{\sqrt{2}}\epsilon^{p})\subset V_{+\epsilon}$:
\[
N_{+\epsilon}(\epsilon^{s+1})=\bigl\{(x,y)\in V_{+\epsilon}\colon y_1^{2}+\epsilon^{2s}y_2^{2}\le \epsilon^{2s+2}\bigr\}\approx_{\phi_{\epsilon}} S_x(\tfrac{1}{\sqrt{2}}\epsilon^{p})\times D_y(\epsilon^{s+1}),
\]
where the diffeomorphism
\[
\phi_{\epsilon}\colon S_x(\tfrac{1}{\sqrt{2}}\epsilon^{p})\times D_y(\epsilon^{s+1})\to
N_{+\epsilon}(\epsilon^{s+1})
\]
is
\[
\phi(x,y)=
\left(\sqrt{1+\epsilon^{-2p}(y_1^{2}+\epsilon^{2s}y_2^{2})}\,x,\,y\right).
\]
The boundary $\pa N_{+\epsilon}(\epsilon^{s+1})$ is the product
\begin{align*}
\pa N_{+\epsilon}(\epsilon^{s+1})
&=\bigl\{(x,y)\in V_{+\epsilon}\colon y_1^{2}+\epsilon^{2s}y_2^{2}=\epsilon^{2s+2}\bigr\}\\
&=S_x(\tfrac{1}{\sqrt{2}}\epsilon^{s+1}\sqrt{1+\epsilon^{2p-2s-2}})\times S_y(\epsilon^{s+1}).
\end{align*}

We study the Reeb flow through $N_{+\epsilon}(\epsilon^{s+1})$. Fix $y(0)\in S_y(\epsilon^{s+1})$. Denote the time $t$ Reeb flow starting at $(x,y(0))\in \pa N_{+\epsilon}(\epsilon^{s+1})$ by $\Phi^{t}(x)$. Define
\[
\tau(x)=\sup\left\{t\ge 0\colon \pi_{y}(\Phi^{t}(x))\in D_y(\epsilon^{s+1})\right\},
\]
where $\pi_{y}(x,y)=y$. It is easy to see that $\tau(x)$ is uniformly bounded.

\begin{lma}\label{l:flowthrhandle}
The function $\tau(x)$ satisfies $\tau(x)>0$ if and only if $x_1 y_1(0)+\epsilon^{4s}x_2\cdot y_2(0)<0$. The map
\[
x\mapsto\pi_y\bigl(\Phi^{\tau(x)}(x,y(0))\bigr)
\]
is a diffeomorphism from the hemisphere
\[
\left\{x\in S_x(\tfrac{1}{\sqrt{2}}\epsilon^{s+1}\sqrt{1+ \epsilon^{2p-2s-2}})\colon x_1y_1(0)+\epsilon^{4s}x_2\cdot y_2(0)<0\right\}
\]
to the punctured sphere $S_y(\epsilon^{s+1})-\{(y(0))\}$.
\end{lma}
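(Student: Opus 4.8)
The plan is to use the explicit Reeb flow formula \eqref{e:Reebflow} directly, since the equations decouple into a $\C$-block (the $(x_1,y_1)$ hyperbolic rotation at speed $\sqrt 2$) and a $\C^{n-1}$-block (the $(x_2,y_2)$ hyperbolic rotation at speed $\sqrt 2\,\epsilon^{2s}$). First I would compute the function that governs whether the flow immediately enters or leaves the region $\pi_y(\cdot)\in D_y(\epsilon^{s+1})$: along the flow, set $g(t)=y_1(t)^2+\epsilon^{2s}y_2(t)^2$, which at $t=0$ equals $\epsilon^{2s+2}$ since the starting point lies on $\pa N_{+\epsilon}(\epsilon^{s+1})$. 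Differentiating and using $\dot y_1=2x_1$, $\dot y_2=2\epsilon^{2s}x_2$ gives $\tfrac12\dot g(0)=2x_1y_1(0)+2\epsilon^{4s}x_2\cdot y_2(0)$. Hence $\tau(x)>0$ exactly when $\dot g(0)<0$, i.e. when $x_1y_1(0)+\epsilon^{4s}x_2\cdot y_2(0)<0$ (one must also note, using $\ddot g>0$ from the hyperbolic equations — $g$ is a sum of squares of solutions of $\ddot z=c^2 z$, so $g$ is convex along the flow — that once $g$ starts decreasing it cannot return to the region boundary before exiting, so $\tau$ is the first time $g$ returns to $\epsilon^{2s+2}$, finite and positive; this convexity also re-proves the stated uniform bound on $\tau$).

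Next I would identify the exit map explicitly. Restrict to a fixed $y(0)\in S_y(\epsilon^{s+1})$ and a point $x$ in the open hemisphere $\{x_1y_1(0)+\epsilon^{4s}x_2\cdot y_2(0)<0\}$ of the sphere $S_x(\tfrac1{\sqrt2}\epsilon^{s+1}\sqrt{1+\epsilon^{2p-2s-2}})$. Using \eqref{e:Reebflow}, $y_1(\tau(x))$ and $y_2(\tau(x))$ are explicit functions of $x$, $y(0)$, and $\tau(x)$, and $\tau(x)$ itself is determined implicitly by $g(\tau(x))=\epsilon^{2s+2}$. I would argue smoothness of $\tau$ via the implicit function theorem: at the exit time $\dot g(\tau(x))>0$ strictly (again by strict convexity of $g$, since $\dot g$ is strictly increasing and $\dot g(0)<0<\dot g(\tau)$), so $\partial_t[g(t)-\epsilon^{2s+2}]\ne0$ there, giving a smooth $\tau(x)$ and hence a smooth exit map $F\colon x\mapsto \pi_y(\Phi^{\tau(x)}(x,y(0)))$ into $S_y(\epsilon^{s+1})$.

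To see $F$ is a diffeomorphism onto the punctured sphere $S_y(\epsilon^{s+1})\setminus\{y(0)\}$, I would construct the inverse explicitly by running the flow backward: given a target point $y_1\in S_y(\epsilon^{s+1})$, $y_1\ne y(0)$, the backward Reeb trajectory from $(x',y_1)$ (for the appropriate $x'$ on $V_{+\epsilon}$) leaves $D_y(\epsilon^{s+1})$ in backward time at a unique point, again by convexity of $g$ — a convex function on an interval with value $\epsilon^{2s+2}$ at both ends of the time window and strictly below in between has exactly two boundary crossings, one entering and one exiting. This pairs each interior chord of the flow through the neighborhood with its entry point in the source hemisphere and its exit point $y_1\ne y(0)$; the endpoint $y(0)$ itself is excluded because the chord through $y(0)$ has zero length (it is tangent to the boundary, corresponding to $\dot g(0)=0$, i.e. the equator). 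The map $y_1\mapsto(\text{entry point})$ is smooth by the same implicit-function argument and is a two-sided inverse to $F$.

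The main obstacle I expect is the bookkeeping at the boundary of the hemisphere, where $\dot g(0)\to0$ and $\tau(x)\to0$: one has to check that $F$ extends continuously to the equator as the identity-type map onto the excluded point $y(0)$ and that the "entering vs.\ exiting" dichotomy degenerates exactly there, so that $F$ is a bijection on the open hemisphere with no loss or overcounting. Handling this cleanly — rather than the interior smoothness, which is routine from \eqref{e:Reebflow} and the implicit function theorem — is where the care is needed; the convexity of $g$ along Reeb trajectories is the one structural fact that makes all of it go through.
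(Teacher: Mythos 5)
Your computation of $\dot g(0)$ is identical to the paper's, and your convexity observation is a genuine improvement on the paper's terse proof. The paper computes $\left.\frac{d}{dt}\right|_{t=0}(y_1^2+\epsilon^{2s}y_2^2)$ but does not say why this sign actually characterizes $\tau(x)>0$; your point that $g(t)=y_1(t)^2+\epsilon^{2s}|y_2(t)|^2$ is \emph{strictly} convex along any nontrivial trajectory (each coordinate solves $\ddot z=c^2z$, so $\ddot g=2\dot y_1^2+4y_1^2+2\epsilon^{2s}|\dot y_2|^2+4\epsilon^{6s}|y_2|^2>0$ since the trajectory never passes through the origin) settles the first claim cleanly, gives finiteness of $\tau$, and gives smoothness of $\tau$ via the implicit function theorem because $\dot g(\tau(x))>0$. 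That part is correct and worth keeping.

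The second part is where your argument has a genuine gap. You propose to construct the inverse by running the flow backward from $(x',y_1)$ "for the appropriate $x'$ on $V_{+\epsilon}$" --- but determining that appropriate $x'$ is precisely what the lemma asserts. Fixing $y_1\in S_y(\epsilon^{s+1})-\{y(0)\}$, the set of admissible $x'$ (those on $S_x(\tfrac{1}{\sqrt2}\epsilon^{s+1}\sqrt{1+\epsilon^{2p-2s-2}})$ on the exiting side) is an entire open hemisphere, and for each such $x'$ the backward trajectory exits $D_y(\epsilon^{s+1})$ at some point with some $y$-component; nothing in your argument shows that exactly one of these exits has $y$-component $y(0)$. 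Convexity of $g$ only tells you that a \emph{fixed} trajectory meets $\partial D_y(\epsilon^{s+1})$ exactly twice --- it pairs an entry with an exit along a single chord but says nothing about how the exit $y$-coordinate varies over the $(n-1)$-parameter family of chords with entry $y$-coordinate $y(0)$. Stated bluntly, your "inverse" is the same existence-and-uniqueness problem with the roles of $y(0)$ and $y_1$ swapped, so the construction is circular. To close the gap you need one of the two things the paper implicitly invokes: either unwind the explicit linear flow \eqref{e:Reebflow} (the $(x_1,y_1)$ and $(x_2,y_2)$ blocks evolve by $\cosh/\sinh$ matrices, from which $x$ can be solved for in terms of the exit $y$ and $\tau$, giving an explicit inverse), or make a degree argument: compute $dF$ and show $F$ is a local diffeomorphism, use your equator analysis ($F(x)\to y(0)$ as $x$ approaches the equator) to show $F$ is proper onto $S_y(\epsilon^{s+1})-\{y(0)\}$, check degree $1$ from the leading-order behavior $F(x)-y(0)\sim \tau\cdot(2x_1,2\epsilon^{2s}x_2)$ near the equator, and conclude $F$ is a bijection since domain and target are both connected. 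Neither of these steps is in your proposal.
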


\begin{pf}
The derivative of the flow $(x(t),y(t))$ of \eqref{e:Reebflow} with initial condition $(x,y(0))$ satisfies
\[
\left.\frac{d}{dt}\right|_{t=0} y_1^2+\epsilon^{2s}y_2^{2} = 4 (x_1y_1(0)+\epsilon^{4s}x_2\cdot y_2(0)).
\]
Equation \eqref{e:Reebflow} implies that for any $y\in S_y(\epsilon^{s+1})-\{y(0)\}$ there is a unique $x\in S_x(\tfrac{1}{\sqrt{2}}\epsilon^{s+1}\sqrt{1+ \epsilon^{2p-2s-2}})$ with $x_1y_1(0)+\epsilon^{4s}x_2\cdot y_2(0)<0$ such that $\pi_y\bigl(\Phi^{\tau(x)}(x,y(0))\bigr)=y$ and that the last statement holds, see Figure \ref{fig:flow}.
\end{pf}

\begin{figure}[ht]
\centering
\includegraphics[width=.25\linewidth]{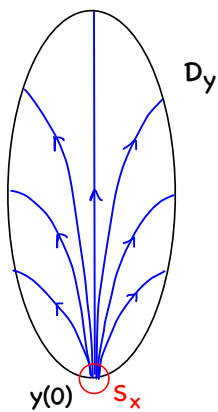}
\caption{The Reeb flow of in $N_{+\epsilon}(\epsilon^{s+1})$ projected to $D_{y}(\epsilon^{s+1})$. The initial condition in $S_{x}(\tfrac{1}{\sqrt{2}}\epsilon^{s+1}\sqrt{1+ \epsilon^{2p-2s-2}})$ is illustrated schematically by the intersection of the flow line and an infinitesimal sphere surrounding the starting point.}
\label{fig:flow}
\end{figure}

\subsection{Neighborhoods of spheres}\label{s:sphnbhd}
Let $\Lambda\subset Y$ be a Legendrian sphere in a contact manifold. Fix a contact identification of a neighborhood of $\Lambda\subset Y$ with a neighborhood of the $0$-section in $J^1(S^{n-1})$. We use coordinates $(q,p,z)\in\R^n\times\R^n\times\R$ on $J^1(S^{n-1})$, where we identify $J^1(S^{n-1})$ with a subset of $\R^n\times\R^n\times\R$ as follows
\[
J^{1}(S^{n-1})=\bigl\{(q,p,z)\in\C^{n}\times\R\colon q^2=1,\quad p\cdot q=0\bigr\}.
\]
We use the standard contact form $\alpha_{\rm st}$ on $J^{1}(S^{n-1})$ given by
\[
\alpha_{\rm st}=dz-p\cdot dq
\]
and with Reeb vector field $R_{\alpha_{\rm st}}=\pa_z$. Furthermore, we consider $T^{\ast}S^{n-1}\subset J^{1}(S^{n-1})$ as
\[
T^{\ast}S^{n-1}=\bigl\{(q,p,z)\in J^{1}(S^{n-1})\colon z=0\bigr\}.
\]
We will consider tubular neighborhoods of the zero section $S^{n-1}\subset J^{1}(S^{n-1})$ of the form
\begin{equation}\label{e:ellnbhd}
\bigl\{(p,q,z)\colon Q_q(p)\le r^{2}, -f_q(Q_q(p))\le z\le f_q(Q_q(p))\bigr\},
\end{equation}
where $Q_q$ are positive definite quadratic forms and where $f_q(s)$ are non-negative functions on $[0,r^{2}]$ which vanishes at $r^{2}$.

Let $\epsilon>0$. Consider
\[
D_{x}(\tfrac{1}{\sqrt{2}}\epsilon^{s+1})\times i\R^{n}\subset\R^{n}\times i\R^{n}=\C^{n}.
\]
The intersection
\[
N_{-\epsilon}(\tfrac{1}{\sqrt{2}}\epsilon^{s+1})=
(D_{x}(\tfrac{1}{\sqrt{2}}\epsilon^{s+1})\times i\R^{n})\cap V_{-\epsilon}
\]
is a tubular neighborhood of the Legendrian sphere $S_{y}(\epsilon^{p})\subset V_{-\epsilon}$:
\begin{align*}
N_{-\epsilon}(\tfrac{1}{\sqrt{2}}\epsilon^{s+1})&=\left\{(x,y)\in V_{-\epsilon}\colon  x_1^{2}+\epsilon^{2s}x_2^{2}\le\tfrac{1}{2}\epsilon^{2s+2}\right\}\\
&\approx_{\phi_{-\epsilon}} S_y(\epsilon^{p})\times D_x(\tfrac{1}{\sqrt{2}}\epsilon^{s+1}),
\end{align*}
where the diffeomorphism
\[
\phi_{-\epsilon}\colon S_y(\epsilon^{p})\times D_x(\tfrac{1}{\sqrt{2}}\epsilon^{s+1})\to
N_{-\epsilon}(\tfrac{1}{\sqrt{2}}\epsilon^{s+1})
\]
is
\[
\phi_{-\epsilon}(x,y)=
\left(x,\sqrt{1+2\epsilon^{-2p}(x_1^{2}+\epsilon^{2s}x_2^{2})}\,y\right)
\]
The boundary $\pa N_{-\epsilon}(\tfrac{1}{\sqrt{2}}\epsilon^{s+1})$ is the product
\begin{align*}
\pa N_{-\epsilon}(\tfrac{1}{\sqrt{2}}\epsilon^{s+1})&=
\left\{(x,y)\in V_{-\epsilon}\colon x_1^{2}+\epsilon^{2s}x_2^{2}=\tfrac12\epsilon^{2s+2}\right\}\\
&=S_x(\tfrac{1}{\sqrt{2}}\epsilon^{s+1})\times S_y(\epsilon^{s+1}\sqrt{1+\epsilon^{2p-2s-2}}).
\end{align*}

Define $W_{-\epsilon}\subset V_{-\epsilon}$ as the subset
\[
W_{-\epsilon}=\{(x,y)\in V_{-\epsilon}\colon x\cdot y=0\}.
\]
Consider the map
\[
\psi\colon W_{-\epsilon}\to T^{\ast}S^{n-1}\subset J^{1}(S^{n-1})
\]
defined by the formula
\begin{equation}
\psi(x,y)=\bigl(q(x,y),p(x,y),z(x,y)\bigr)=\bigl(|y|^{-1} y,\,\, -|y| x,\,\, 0\bigr).
\end{equation}

\begin{lma}\label{l:charts}
Let
\[
B_{-\epsilon}(\tfrac{1}{\sqrt{2}}\epsilon^{s+1})=W_{-\epsilon}\cap N_{-\epsilon}(\tfrac{1}{\sqrt{2}}\epsilon^{s+1}).
\]
Then $B_{-\epsilon}(\tfrac{1}{\sqrt{2}}\epsilon^{s+1})$ is diffeomorphic to a disk sub-bundle of $T^{\ast}S_y(\epsilon^{p})$ and $\psi\colon B_{-\epsilon}(\tfrac{1}{\sqrt{2}}\epsilon^{s+1})\to T^{\ast} S^{n-1}\subset J^{1}(S^{n-1})$ is an embedding
into a neighborhood of the $0$-section of the form
\[
\{(q,p)\in T^{\ast}S^{n-1}\colon Q_q(p)\le 1\},
\]
where $Q_q$ is a family of positive definite quadratic forms such that for each $q$ the surface
$\{p\colon Q_q(p)=1\}$ lies between the spheres $|p|=C_0\epsilon^{2}$ and $|p|=c_0\epsilon^{2s+2}$ for positive constants $c_0$ and $C_0$.
Furthermore,
\[
\psi^{\ast}(\alpha_{\rm st}|T^{\ast}S^{n-1})=\alpha|B_{-\epsilon}(\tfrac{1}{\sqrt{2}}\epsilon^{s+1}).
\]
\end{lma}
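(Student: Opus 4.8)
The plan is to make $\psi$ and its inverse completely explicit; once that is done, all four assertions can be read off directly.

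First I would check that $\psi$ takes values in $T^{\ast}S^{n-1}$. On $W_{-\epsilon}$ one has $y\neq 0$, since the defining equation \eqref{e:dfnV} of $V_{-\epsilon}$ forces $y_1^{2}+\epsilon^{2s}y_2^{2}=\epsilon^{2p}+2(x_1^{2}+\epsilon^{2s}x_2^{2})\geq\epsilon^{2p}>0$; hence $q=|y|^{-1}y$ is well defined, $q^{2}=1$, and $p\cdot q=-|y|\,(x\cdot y)=0$ because $x\cdot y=0$ on $W_{-\epsilon}$, so $\psi$ lands in $T^{\ast}S^{n-1}=\{z=0\}\subset J^{1}(S^{n-1})$. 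Next I would invert $\psi$: the equation $\psi(x,y)=(q,p,0)$ forces $y=tq$, $x=-t^{-1}p$ with $t=|y|>0$, and substituting into \eqref{e:dfnV} yields the quartic
\[
B\,t^{4}-\epsilon^{2p}\,t^{2}-2A=0,\qquad A:=p_1^{2}+\epsilon^{2s}p_2^{2}\ (\geq 0),\quad B:=q_1^{2}+\epsilon^{2s}q_2^{2}\ (\in[\epsilon^{2s},1]),
\]
the last containments coming from $q_1^{2}+q_2^{2}=1$. This has the unique positive root $t^{2}=(2B)^{-1}\bigl(\epsilon^{2p}+\sqrt{\epsilon^{4p}+8AB}\,\bigr)$, which depends smoothly on $(q,p)$, and the resulting $(x,y)$ lies in $W_{-\epsilon}$ by construction. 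So $\psi$ restricts to a diffeomorphism $W_{-\epsilon}\to T^{\ast}S^{n-1}$.

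To get the image of $B_{-\epsilon}(\tfrac1{\sqrt2}\epsilon^{s+1})$ I would rewrite the inequality $x_1^{2}+\epsilon^{2s}x_2^{2}\leq\tfrac12\epsilon^{2s+2}$ defining $N_{-\epsilon}(\tfrac1{\sqrt2}\epsilon^{s+1})$ as $A/t^{2}\leq\tfrac12\epsilon^{2s+2}$; rationalizing gives $A/t^{2}=\tfrac14\bigl(\sqrt{\epsilon^{4p}+8AB}-\epsilon^{2p}\bigr)$, and squaring converts this to
\[
Q_q(p):=\frac{2\,(q_1^{2}+\epsilon^{2s}q_2^{2})\,(p_1^{2}+\epsilon^{2s}p_2^{2})}{\epsilon^{2p+2s+2}+\epsilon^{4s+4}}\ \leq\ 1 ,
\]
a positive definite quadratic form in $p$. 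Thus $\psi$ carries $B_{-\epsilon}(\tfrac1{\sqrt2}\epsilon^{s+1})$ onto the disk sub-bundle $\{Q_q(p)\leq 1\}\subset T^{\ast}S^{n-1}$, which is of the form \eqref{e:ellnbhd} with $z\equiv 0$. For the shape estimate I would note that on $\{Q_q(p)=1\}$ we have $p_1^{2}+\epsilon^{2s}p_2^{2}=(\epsilon^{2p+2s+2}+\epsilon^{4s+4})/(2B)$; since $p>s+1$ (recall $s<\tfrac15 p$) and $\epsilon<1$ we get $\epsilon^{2p+2s+2}\leq\epsilon^{4s+4}$, so this quantity lies in $[\epsilon^{4s+4}/(2B),\,\epsilon^{4s+4}/B]$ with $B\in[\epsilon^{2s},1]$; combining with the elementary bounds $p_1^{2}+\epsilon^{2s}p_2^{2}\leq |p|^{2}\leq\epsilon^{-2s}(p_1^{2}+\epsilon^{2s}p_2^{2})$ yields $\tfrac1{\sqrt2}\epsilon^{2s+2}\leq |p|\leq\epsilon^{2}$ on $\{Q_q(p)=1\}$, so $c_0=\tfrac1{\sqrt2}$, $C_0=1$ work. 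Finally, the restriction of $\psi$ to the zero section $\{x=0\}$ is the radial diffeomorphism $S_y(\epsilon^{p})\to S^{n-1}$, $y\mapsto |y|^{-1}y$, whose cotangent lift identifies $\{Q_q(p)\leq 1\}$ with a disk sub-bundle of $T^{\ast}S_y(\epsilon^{p})$; this gives the first assertion of the lemma.

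For the contact forms I would compute directly. Since $z\circ\psi\equiv 0$, $\psi^{\ast}dz=0$, and
\[
\psi^{\ast}(p\cdot dq)=(-|y|x)\cdot d\bigl(|y|^{-1}y\bigr)=-x\cdot dy+\frac{(x\cdot y)(y\cdot dy)}{|y|^{2}}=-x\cdot dy\quad\text{on }W_{-\epsilon},
\]
so $\psi^{\ast}\bigl(\alpha_{\rm st}|T^{\ast}S^{n-1}\bigr)=\psi^{\ast}(dz-p\cdot dq)=x\cdot dy$. On the other hand, differentiating the relation $x\cdot y=0$ that cuts out $W_{-\epsilon}$ gives $y\cdot dx=-x\cdot dy$ there, hence $\alpha|W_{-\epsilon}=2x\cdot dy+y\cdot dx=x\cdot dy$ as well, which is the asserted identity. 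Every computation above is short; the one place where I expect mild bookkeeping difficulty is isolating the clean quadratic form $Q_q$ from the quartic for $t$ and then pinning down its sandwich bound between the spheres $|p|=C_0\epsilon^{2}$ and $|p|=c_0\epsilon^{2s+2}$.
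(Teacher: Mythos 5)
Your proposal is correct, and it is in fact more explicit and more careful than the paper's own proof. The paper argues qualitatively: it fixes $q$, observes that the fiber preimage $\bigl(D_x(\tfrac{1}{\sqrt2}\epsilon^{s+1})\times\R y\bigr)\cap W_{-\epsilon}$ projects to a solid ellipsoid in $\{x\cdot q=0\}$, and then reads off the size bounds $c_0\epsilon^{2s+2}\le|p|\le C_0\epsilon^{2}$ from the extremal lengths of vectors in $S_x(\tfrac{1}{\sqrt2}\epsilon^{s+1})$ and $S_y(\epsilon^{s+1}\sqrt{1+\epsilon^{2p-2s-2}})$. This leaves implicit the fact that the nonlinear map $x\mapsto -|y(x)|\,x$ (with $|y|$ depending on $x$ through the equation of $V_{-\epsilon}$) takes a quadratic sub-level set to a quadratic sub-level set; your explicit inversion of $\psi$ --- solving the quartic $Bt^{4}-\epsilon^{2p}t^{2}-2A=0$, rationalizing $A/t^{2}=\tfrac14\bigl(\sqrt{\epsilon^{4p}+8AB}-\epsilon^{2p}\bigr)$, and landing on $Q_q(p)=\dfrac{2(q_1^{2}+\epsilon^{2s}q_2^{2})(p_1^{2}+\epsilon^{2s}p_2^{2})}{\epsilon^{2p+2s+2}+\epsilon^{4s+4}}$ --- actually proves that assertion, and the same closed form gives the spherical sandwich with concrete constants $c_0=\tfrac1{\sqrt2}$, $C_0=1$. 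Your computation of the pullback of the contact form also matches the paper's in substance, but your sign bookkeeping is correct where the paper's displayed line $\psi^{\ast}(p\cdot dq)=x\cdot dy$ has a typographical sign slip (one has $\psi^{\ast}(p\cdot dq)=-x\cdot dy$ on $W_{-\epsilon}$, hence $\psi^{\ast}(\alpha_{\rm st}|T^{\ast}S^{n-1})=-\psi^{\ast}(p\cdot dq)=x\cdot dy=\alpha|W_{-\epsilon}$, as you show). Finally, your identification of $B_{-\epsilon}(\tfrac1{\sqrt2}\epsilon^{s+1})$ with a disk sub-bundle of $T^{\ast}S_y(\epsilon^{p})$ via the cotangent lift of the radial diffeomorphism $y\mapsto|y|^{-1}y$ is a clean alternative to the paper's projection argument and serves the same purpose.
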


\begin{pf}
To see that the first statement holds note that for given $q\in S^{n-1}$, the set of points which map to the fiber $T^{\ast}_qS^{n-1}$ is
\[
\left(D_x(\tfrac{1}{\sqrt{2}}\epsilon^{s+1})\times \R y\right)\cap W_{-\epsilon},
\]
where $\tfrac{y}{|y|}=q$. Projecting out $y$, the intersection projects to a solid ellipsoid in $\R^{n}$ and the image under $\psi$ is that ellipsoid mapped to the fiber over $q$ and scaled by $|y|$. The maximal length of a vector in $S_x(\tfrac{1}{\sqrt{2}}\epsilon^{s+1})$ is of size $\epsilon$ and the minimal length is of size $\epsilon^{s+1}$. The same estimates hold for $|y|$ if $y\in S_y(\epsilon^{s+1}\sqrt{1+\epsilon^{2p-2s-2}})$  and the first statement follows. To prove the last statement, we calculate
\[
\psi_{\pm}^\ast(dq)=\left(|y|^{-1}dy-(y\cdot dy)|y|^{-3} y\right)
\]
and since $x\cdot y=0$ along $W_{-\epsilon}$,
\[
\psi^\ast(p\cdot dq)=x\cdot dy|W_{-\epsilon}=(2x\cdot dy + y\cdot dx)|W_{-\epsilon}=\alpha|W_{-\epsilon}.
\]
\end{pf}

Note that the Reeb field $R_{\alpha}$ is transverse to $W_{-\epsilon}$. We use the Reeb flow with initial condition in $B_{-\epsilon}(\tfrac{1}{\sqrt{2}}\epsilon^{s+1})$ to define a contact embedding of $N_{-\epsilon}(\tfrac{1}{\sqrt{2}}\epsilon^{s+1})$ into $J^{1}(S^{n-1})$. Its image will be a neighborhood of the form \eqref{e:ellnbhd}.

Let $\Phi^{t}(p)$, $p\in V_{-\epsilon}$ denote the time $t$ Reeb flow starting at $p$. Consider the space $B_{-\epsilon}(\tfrac{1}{\sqrt{2}}\epsilon^{s+1})\times(-1,1)$ and consider the map
\[
\Theta\colon B_{-\epsilon}(\tfrac{1}{\sqrt{2}}\epsilon^{s+1})\times(-1,1) \to V_{-\epsilon},\quad
\Theta((x,y),t)=\Phi^{t}((x,y)).
\]
Define
\begin{equation}
\tilde N_{-\epsilon}(\tfrac{1}{\sqrt{2}}\epsilon^{s+1})=
\Theta^{-1}\left(N_{-\epsilon}(\tfrac{1}{\sqrt{2}}\epsilon^{s+1})\right).
\end{equation}
Since Reeb flows preserves contact forms, the contact form $\alpha$ on $V_{-\epsilon}$ is given by
\begin{equation}\label{e:coordcontform}
\alpha^{\rm res}+dt=
(2x\cdot dy + y\cdot dx)^{\rm res} + dt,
\end{equation}
where $t$ is a coordinate on $(-1,1)$ and where $\beta^{\rm res}$ denotes the form $\beta$ restricted to $B_{-\epsilon}(\tfrac{1}{\sqrt{2}}\epsilon^{s+1})$.

We define the map $\Psi\colon \tilde N_{-\epsilon}(\tfrac{1}{\sqrt{2}}\epsilon^{s+1})\to J^{1}(S^{n-1})$ as
\[
\Psi((x,y),t)=(\psi(x,y),t)\in T^{\ast}S^{n-1}\times\R=J^{1}(S^{n-1}),
\]
where $\psi\colon B_{-\epsilon}(\tfrac{1}{\sqrt{2}}\epsilon^{s+1})\to T^{\ast} S^{n-1}$ is the map of Lemma \ref{l:charts}.

\begin{lma}\label{l:ellnbhd}
The map $\Psi$ embeds $N_{-\epsilon}(\tfrac{1}{\sqrt{2}}\epsilon^{s+1})\approx \tilde N_{-\epsilon}(\tfrac{1}{\sqrt{2}}\epsilon^{s+1})$ into $J^{1}(S^{n-1})$. Its image is of the form \eqref{e:ellnbhd} where $Q_q$ is as in Lemma \ref{l:charts} and where $\max_q\max(f_q)=\Ordo(\epsilon^{2s+2s}\log(\epsilon^{1-p}))$. Furthermore, $\Psi^{\ast}(\alpha_{\rm st})=\alpha$, i.e., $\Psi$ is a contact embedding.
\end{lma}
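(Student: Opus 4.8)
The plan is to combine Lemma \ref{l:charts}, which already handles the "$z=0$ slice'' $B_{-\epsilon}(\tfrac{1}{\sqrt 2}\epsilon^{s+1})\hookrightarrow T^\ast S^{n-1}$ and the identity $\psi^\ast\alpha_{\rm st}=\alpha$ there, with the coordinates $((x,y),t)$ on $\tilde N_{-\epsilon}$ furnished by the Reeb flow $\Theta$. First I would observe that $\Psi$ is injective: the Reeb flow on $V_{-\epsilon}$ is transverse to $W_{-\epsilon}$ (noted just before the lemma), so $\Theta$ is a diffeomorphism onto its image and the coordinates $((x,y),t)$ are globally well-defined on $\tilde N_{-\epsilon}$; since $\psi$ is an embedding by Lemma \ref{l:charts} and $R_{\alpha_{\rm st}}=\partial_z$, the map $\Psi((x,y),t)=(\psi(x,y),t)$ is visibly injective and an immersion, hence an embedding. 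The contact condition $\Psi^\ast\alpha_{\rm st}=\alpha$ is then immediate from \eqref{e:coordcontform}: in the $((x,y),t)$-coordinates $\alpha=\alpha^{\rm res}+dt=\psi^\ast(\alpha_{\rm st}|T^\ast S^{n-1})+dt$, while $\Psi^\ast\alpha_{\rm st}=\Psi^\ast(dz-p\cdot dq)=\psi^\ast(\alpha_{\rm st}|T^\ast S^{n-1})+dt$ because the $z$-coordinate of $\Psi$ is exactly $t$.

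Next I would identify the image. Since $\psi$ maps $B_{-\epsilon}$ onto the ellipsoid neighborhood $\{Q_q(p)\le 1\}$ of Lemma \ref{l:charts}, and the $t$-coordinate is carried to $z$, the image of $\Psi$ is the flow-out of this ellipsoid bundle along $\partial_z$ for the time interval cut out by the condition $\Theta((x,y),t)\in N_{-\epsilon}(\tfrac{1}{\sqrt2}\epsilon^{s+1})$. This is exactly a region of the form \eqref{e:ellnbhd}, with $Q_q$ as in Lemma \ref{l:charts} and with $f_q(Q_q(p))$ equal to (half) the length of the time interval during which the Reeb trajectory through the point of $B_{-\epsilon}$ with "radius'' $Q_q(p)$ stays in $N_{-\epsilon}$. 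So the only substantive computation is the estimate $\max_q\max(f_q)=\Ordo(\epsilon^{2s+2}\log(\epsilon^{1-p}))$ (I read the exponent in the statement as $2s+2$). For this I would start a trajectory at a point of $B_{-\epsilon}$, i.e. with $x\cdot y=0$ and $x_1^2+\epsilon^{2s}x_2^2=\tfrac12\epsilon^{2s+2}$ at $t=0$, and follow the explicit linear Reeb flow \eqref{e:Reebflow}; the exit time is governed by when $x_1^2+\epsilon^{2s}x_2^2$ first reaches $\tfrac12\epsilon^{2s+2}$ again (the defining inequality for $N_{-\epsilon}$). Using that $x_1(t)$ grows like $\cosh(\sqrt2 t)$ and $x_2(t)$ like $\cosh(\sqrt2\epsilon^{2s}t)$, the binding constraint on the slow $\C^{n-1}$-factor gives an exit time of order $\epsilon^{2s}\cdot(\text{something}\sim\log)$; tracking the constants — in particular that the relevant ratio of radii is $\epsilon^{p}/\epsilon^{s+1}$, i.e. $\epsilon^{p-s-1}$, whence a factor $\log(\epsilon^{1-p})$ after accounting for the $\epsilon^{s+1}$ normalizations — yields the claimed bound. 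The prefactor $\epsilon^{2s+2}$ comes from the $dt$-normalization of the contact form together with the size $\epsilon^{2s+2}$ of the "height'' in the $x$-variables; I would pin this down by keeping the Reeb form $\alpha=2x\cdot dy+y\cdot dx$ (not just its rescaled flow) so that $t$ measures $\alpha$-length, not parameter length, and the $N(x,y)^{-1}\sim\epsilon^{2s+2}$ of \eqref{e:ReebVnormaliz} enters.

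The main obstacle is this last estimate: reconciling the reparametrized Reeb flow \eqref{e:Reebflow}, which is uniform in $t$, with the actual contact form $\alpha$ whose Reeb field carries the normalization $N(x,y)$ from \eqref{e:ReebVnormaliz}, and then getting the exponents and the logarithm exactly right while controlling the $q$-dependence uniformly. I expect the clean way to do it is to compute $\int \alpha$ along a trajectory directly — i.e. $f_q$ is essentially $\int_0^{\tau} N(x(u),y(u))^{-1}\,\mathrm{less stuff}$ — rather than converting exit parameter-times into $\alpha$-length after the fact; the $\log$ then appears as $\int \frac{du}{u}$ over the range of the slow variable $x_2$ between its minimal and maximal radii, which is where $\log(\epsilon^{1-p})$ comes from. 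Everything else (injectivity, immersion, the pullback identity, the shape \eqref{e:ellnbhd}) is formal once Lemmas \ref{l:charts} and the coordinate description \eqref{e:coordcontform} are in hand.
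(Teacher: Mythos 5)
You take essentially the same route as the paper's (quite terse) proof. Both arguments rest on: Lemma \ref{l:charts} for the $z=0$ slice, \eqref{e:coordcontform} for both $\Psi^{\ast}\alpha_{\rm st}=\alpha$ and the product form \eqref{e:ellnbhd} of the image, and, for the size of $f_q$, the interplay between the re-normalized linear flow \eqref{e:Reebflow} and the normalization factor $N(x,y)^{-1}\sim\epsilon^{2s+2}$ of \eqref{e:ReebVnormaliz}. The paper executes the last step by bounding the re-normalized exit time by $\Ordo(\log\epsilon^{1-p})$ and then converting to $\alpha$-time via the bound $N^{-1}\le C\epsilon^{2s+2}$; your proposal to compute $\int\alpha=\int N^{-1}\,dt$ along trajectories directly is the same computation organized in the cleaner order, and your instinct that this is the right way to handle the $q$-uniformity is sound. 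Two small slips are worth fixing. First, a point of $B_{-\epsilon}$ satisfies $x_1^{2}+\epsilon^{2s}x_2^{2}\le\tfrac12\epsilon^{2s+2}$, not $=\tfrac12\epsilon^{2s+2}$: the equality describes $\partial B_{-\epsilon}$, where $f_q=0$; the worst case $\max(f_q)$ is attained at the center $x=0$, i.e.\ on the core sphere $S_y(\epsilon^{p})$. Second, in "exit time of order $\epsilon^{2s}\cdot(\text{something}\sim\log)$" the sign of the exponent is reversed: when the slow $\C^{n-1}$-factor is binding (initial condition at a pole $y_1=0$ of the core ellipsoid), the re-normalized parameter exit time is of order $\epsilon^{-2s}\log(1/\epsilon)$, not $\epsilon^{2s}\log(1/\epsilon)$; this is precisely where the naive "$\max N^{-1}$ times parameter time" bound and the direct $\int N^{-1}\,dt$ computation must be compared carefully, and you are right to flag it as the one non-formal point in the proof. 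Everything else — injectivity of $\Psi$ from transversality of the Reeb field to $W_{-\epsilon}$ plus Lemma \ref{l:charts}, the pullback identity from \eqref{e:coordcontform}, the flow-out description of the image — is exactly what the paper takes as read.
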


\begin{pf}
The statement on contact forms is immediate from \eqref{e:coordcontform}. To see that the size of the neighborhood is correct, note that given an initial condition in $B_{-\epsilon}(\tfrac{1}{\sqrt{2}}\epsilon^{s+1})$, a crude estimate is that it takes the re-normalized Reeb flow of \eqref{e:Reebflow}  $\Ordo\left(\log(\epsilon^{1-p})\right)$ to reach $\pa N_{-\epsilon}(\tfrac{1}{\sqrt{2}}\epsilon^{s+1})$. In $N_{-\epsilon}(\tfrac{1}{\sqrt{2}}\epsilon^{s+1})$, the minimal ratio of the length of the re-normalized Reeb field and the length of the original Reeb field is larger than $C\epsilon^{-2s-2}$ for some constant $C>0$ and the last statement follows as well.
\end{pf}

\section{Construction of a cobordism}\label{S:cobconstr}
Let $Y$ be a contact $(2n-1)$-manifold and let $\Lambda\subset Y$ be a Legendrian $(n-1)$-sphere. In this section we construct an explicit family of models for the exact symplectic cobordism $X_\Lambda$ with positive contact boundary $Y_{\Lambda}$ and negative contact boundary $Y$ obtained by attaching a handle to $\pa(\Lambda\times[0,\infty))\subset \pa (Y\times(0,\infty])$.

\subsection{The construction of $Y_\Lambda(\epsilon)$}\label{s:constrYLambda}
Fix an identification of a neighborhood $N(\Lambda)$ of $\Lambda\subset Y$ with a neighborhood of the $0$-section in $J^{1}(S^{n-1})$ as in Subsection \ref{s:sphnbhd}. Let $\Psi$ denote the map of Lemma \ref{l:ellnbhd}. Use the map $\Psi$ to identify a smaller neighborhood, of the form described in Lemma \ref{l:ellnbhd}, of $\Lambda\subset Y$ with the neighborhood $N_{-\epsilon}(\tfrac{1}{\sqrt{2}}\epsilon^{s+1})$ of $S_{y}(\epsilon^{p})\subset V_{-\epsilon}$.

\begin{rmk}\label{r:refmetric}
In order to keep track of neighborhood sizes we fix a reference metric on $Y$ which agrees with the standard metric in $J^{1}(S^{n-1})$.
\end{rmk}

The contact manifold $Y_\Lambda(\epsilon)$ which results from surgery on $\Lambda\subset Y$ is identical to $Y$ outside $N(\Lambda)$ and obtained by attaching a neighborhood of $N_{+\epsilon}(\tilde\epsilon^{s+1})\subset V_{+\epsilon}$ along a neighborhood of $\pa N(\Lambda)\approx\pa N_{-\epsilon}(\tfrac{1}{\sqrt{2}}\epsilon^{s+1})$, where $\tilde\epsilon=\epsilon + \Ordo(\epsilon^{l})$ for some 
\begin{equation}\label{eq:paraml}
5s+5<l<p.
\end{equation}
We now give the details of this construction.

Consider the Liouville flow $(x(t),y(t))=\Omega^{t}(x(0),y(0))$, $t\in \R$, of the vector field $v$ on $\C^{n}$, see \eqref{e:LiouvilleV}, with initial condition $(x(0),y(0))$. This flow is given by
\begin{equation}\label{e:Liouvilleflow}
\begin{cases}
x(t)=x(0)\,e^{2t},\\
y(t)=y(0)\,e^{-t}.
\end{cases}
\end{equation}
We will consider initial conditions $(x(0),y(0))\in V_{-\epsilon}$ in
\[
A_{-\epsilon}=
N_{-\epsilon}(\tfrac{1}{\sqrt{2}}\epsilon^{s+1})-N_{-\epsilon}(\tfrac{1}{2\sqrt{2}}\epsilon^{s+1}).
\]
For such initial conditions, define $T(x(0),y(0))$ through the equation
\[
\Omega^{T(x(0),y(0))}(x(0),y(0))\in V_{+\epsilon},
\]
see Figure \ref{fig:attachflow}.

\begin{figure}[ht]
\centering
\includegraphics[width=.55\linewidth]{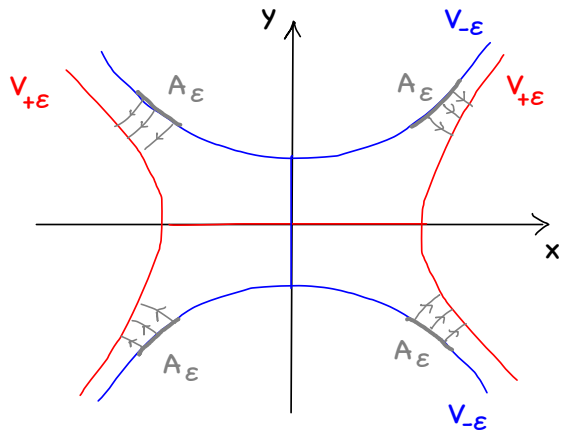}
\caption{The Liouville flow with initial condition in $A_{-\epsilon}$ gives rise to the attaching map for the surgery.}
\label{fig:attachflow}
\end{figure}

\begin{lma}
On $A_{-\epsilon}$, the function $T$ depends only on $x_1(0)^{2}+\epsilon^{2s}x_2^{2}(0)$, i.e.,
\[
T(x(0),y(0))=T(x_1(0)^{2}+\epsilon^{2s}x_2^{2}(0)),
\]
and satisfies
\begin{align}\label{e:Tbound}
T&=\Ordo(\epsilon^{2p-2s-2}),\\\label{e:T'bound}
|d^{(k)}T|&=\Ordo(\epsilon^{2p-(2+k)(s+1)}),\quad k=1,2,3,
\end{align}
where $d^{(k)}T$ denotes the $k^{\rm th}$ derivative of $T$.
\end{lma}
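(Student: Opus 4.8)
The plan is to compute $T$ directly from the two defining equations \eqref{e:dfnV} and the explicit Liouville flow \eqref{e:Liouvilleflow}. Start with an initial point $(x(0),y(0))\in A_{-\epsilon}\subset V_{-\epsilon}$, so $2x_1(0)^2-y_1(0)^2+\epsilon^{2s}(2x_2(0)^2-y_2(0)^2)=-\epsilon^{2p}$. Under the flow we have $x(t)=x(0)e^{2t}$, $y(t)=y(0)e^{-t}$, so the quadratic expression $Q(t):=2x_1(t)^2-y_1(t)^2+\epsilon^{2s}(2x_2(t)^2-y_2(t)^2)$ becomes
\[
Q(t)=\bigl(2x_1(0)^2+\epsilon^{2s}\cdot 2x_2(0)^2\bigr)e^{4t}-\bigl(y_1(0)^2+\epsilon^{2s}y_2(0)^2\bigr)e^{-2t}.
\]
Writing $a=2\bigl(x_1(0)^2+\epsilon^{2s}x_2(0)^2\bigr)$ and $b=y_1(0)^2+\epsilon^{2s}y_2(0)^2$, the constraint $(x(0),y(0))\in V_{-\epsilon}$ reads $a-b=-\epsilon^{2p}$, i.e. $b=a+\epsilon^{2p}$, so $b$ is determined by $a$ alone; and the condition $\Omega^{T}\in V_{+\epsilon}$ is the single equation $ae^{4T}-be^{-2T}=\epsilon^{2p}$. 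This is an equation in $T$ whose coefficients depend only on $a=2(x_1(0)^2+\epsilon^{2s}x_2(0)^2)$, which immediately gives the first claim: $T=T(x_1(0)^2+\epsilon^{2s}x_2(0)^2)$.

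Next I would derive the size estimate \eqref{e:Tbound}. On $A_{-\epsilon}=N_{-\epsilon}(\tfrac{1}{\sqrt2}\epsilon^{s+1})\setminus N_{-\epsilon}(\tfrac{1}{2\sqrt2}\epsilon^{s+1})$ the quantity $x_1(0)^2+\epsilon^{2s}x_2(0)^2$ is of order $\epsilon^{2s+2}$ (between $\tfrac18\epsilon^{2s+2}$ and $\tfrac12\epsilon^{2s+2}$), so $a\asymp\epsilon^{2s+2}$ and $b=a+\epsilon^{2p}\asymp\epsilon^{2s+2}$ as well, since $p\gg s$. Setting $u=e^{2T}$, the equation $au^2-\epsilon^{2p}=b/u$, i.e. $au^3-\epsilon^{2p}u-b=0$, has the root $u=1$ when $\epsilon^{2p}=0$ and $a=b$; for the actual (small, positive) value of $\epsilon^{2p}/a\asymp\epsilon^{2p-2s-2}$ the implicit function theorem gives $u=1+\Ordo(\epsilon^{2p-2s-2})$, hence $T=\tfrac12\log u=\Ordo(\epsilon^{2p-2s-2})$, which is \eqref{e:Tbound}.

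For the derivative bounds \eqref{e:T'bound} I would differentiate the defining relation $g(a,T):=ae^{4T}-(a+\epsilon^{2p})e^{-2T}-\epsilon^{2p}=0$ implicitly with respect to the variable $\sigma:=x_1(0)^2+\epsilon^{2s}x_2(0)^2$ (so $a=2\sigma$). One computes $\partial_T g=4ae^{4T}+2(a+\epsilon^{2p})e^{-2T}\asymp a\asymp\epsilon^{2s+2}$, bounded below away from zero after dividing by $\epsilon^{2s+2}$, while $\partial_a g=e^{4T}-e^{-2T}=\Ordo(T)=\Ordo(\epsilon^{2p-2s-2})$. Hence $\tfrac{dT}{d\sigma}=-2\,\partial_a g/\partial_T g=\Ordo(\epsilon^{2p-2s-2}/\epsilon^{2s+2})=\Ordo(\epsilon^{2p-4s-4})$, which is \eqref{e:T'bound} for $k=1$. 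Iterating — differentiating $g(a,T)=0$ twice and three times, each new derivative of $T$ is solved for by dividing the lower-order terms by $\partial_T g\asymp\epsilon^{2s+2}$, and each factor of $e^{4T}-e^{-2T}$ or higher $T$-derivative contributes extra smallness — gives the bounds for $k=2,3$; the claimed exponent $2p-(2+k)(s+1)$ is exactly what this bookkeeping produces. The main obstacle is purely organizational: keeping the hierarchy of $\epsilon$-powers straight through the repeated implicit differentiation, checking that the lower bound on $\partial_T g$ really is uniform on $A_{-\epsilon}$ (which uses $p\gg s$ so that the $\epsilon^{2p}$ terms never compete with the $\epsilon^{2s+2}$ terms), and verifying that no derivative of $T$ in $x_1,x_2$ separately is larger than the derivative in $\sigma$ times the (bounded) derivative of $\sigma$ — the latter being immediate since $\sigma$ is a fixed quadratic form with coefficients $1$ and $\epsilon^{2s}\le 1$.
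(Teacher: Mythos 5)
Your first two claims are correctly argued and match the paper's approach: you isolate $a=2(x_1(0)^2+\epsilon^{2s}x_2(0)^2)$ and $b=y_1(0)^2+\epsilon^{2s}y_2(0)^2$, use the constraint $b=a+\epsilon^{2p}$ to write a single scalar equation for $T$ with coefficients depending only on $\sigma:=x_1(0)^2+\epsilon^{2s}x_2(0)^2$, and then estimate its solution. The paper does exactly this (rewriting as $T\cdot\tfrac{e^{4T}-e^{-2T}}{T}=\tfrac{\epsilon^{2p}}{\sigma}\cdot\tfrac{1+e^{-2T}}{2}$ and monotonicity), arriving at $T=\tfrac{\epsilon^{2p}}{\sigma}\phi$ with $\phi$ smooth. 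So far, fine.

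The derivative step has a genuine error in the exponent bookkeeping. You correctly compute $\partial_a g\asymp T\asymp\epsilon^{2p-2s-2}$ and $\partial_T g\asymp a\asymp\epsilon^{2s+2}$ and conclude $\tfrac{dT}{d\sigma}=\Ordo(\epsilon^{2p-4s-4})=\Ordo(\epsilon^{2p-4(s+1)})$. But then you write ``which is \eqref{e:T'bound} for $k=1$,'' and that is false: \eqref{e:T'bound} for $k=1$ reads $\Ordo(\epsilon^{2p-3(s+1)})=\Ordo(\epsilon^{2p-3s-3})$, and your exponent $2p-4s-4$ actually matches the $k=2$ bound. In fact your $\sigma$-derivative computation would give the pattern $\Ordo(\epsilon^{2p-2(k+1)(s+1)})$, which cannot be what the lemma asserts (it would make the $k=1$ statement strictly \emph{false} if $d^{(k)}T$ meant $d^kT/d\sigma^k$, since $\epsilon^{2p-4s-4}\gg\epsilon^{2p-3s-3}$). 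The lemma's $d^{(k)}T$ must be interpreted as the $k$-th derivative in the ambient coordinates $x$ (equivalently in the radial variable $r=\sqrt{\sigma}\asymp\epsilon^{s+1}$), and the missing ingredient is that on $A_{-\epsilon}$ the $1$-form $d\sigma=2x_1\,dx_1+2\epsilon^{2s}x_2\cdot dx_2$ is not merely bounded but of size $\Ordo(\epsilon^{s+1})$ (since $|x_1|\lesssim\epsilon^{s+1}$ and $|x_2|\lesssim\epsilon$, $s\ge1$). That factor of $\epsilon^{s+1}$ is exactly what converts your correct $\tfrac{dT}{d\sigma}=\Ordo(\epsilon^{2p-4(s+1)})$ into the stated $|dT|=\Ordo(\epsilon^{2p-3(s+1)})$, and iterating—each additional $\sigma$-derivative costs $\epsilon^{-2(s+1)}$ but each accompanying factor of $d\sigma$ contributes $\epsilon^{s+1}$, with the constant term $d^2\sigma=\Ordo(1)$ giving the same net power—yields the claimed $\Ordo(\epsilon^{2p-(2+k)(s+1)})$ for $k=2,3$. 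Your closing parenthetical ``the (bounded) derivative of $\sigma$'' is thus the exact place where the proof breaks: ``bounded'' is too coarse here; the estimate only closes because $|d\sigma|$ is smaller by a factor $\epsilon^{s+1}$.
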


\begin{pf}
Write $T=T(x(0),y(0))$. Then
\[
2x_1(0)^{2}e^{4T}-y_1(0)^{2}e^{-2T}+\epsilon^{2s}(2x_2(0)^{2}e^{4T}-y_2(0)^{2}e^{-2T})=\epsilon^{2p}.
\]
Using $(x(0),y(0))\in V_{+\epsilon}$, we rewrite this as
\[
T\left(\tfrac{e^{4T}-e^{-2T}}{T}\right)=
\frac{\epsilon^{2p}}{x_1(0)^{2}+\epsilon^{2s}x_2(0)^{2}}\left(\tfrac{1+ e^{-2T}}{2}\right).
\]
Since $x_1(0)^{2}+\epsilon^{2s}x_2(0)^{2}\ge \tfrac{1}{8}\epsilon^{2s+2}$, since the left hand side is increasing in $T$, and since the right hand side is decreasing in $T$ we conclude that $T\le C\epsilon^{2p-2s-2}$ for some constant $C$. For such $T$ the functions in brackets are approximately constant and we find that
\[
T(x(0),y(0))=\frac{\epsilon^{2p}}{x_1(0)^{2}+\epsilon^{2s}x_2(0)^{2}}\cdot \phi(x(0)),
\]
where $\phi(x(0))$ is a smooth function.
\end{pf}

Let
\[
T=T(\tfrac{1}{\sqrt{2}}\epsilon^{s+1}-10\epsilon^{l}),
\]
see \eqref{eq:paraml}, denote the flow time it takes for an initial condition $(x(0),y(0))\in V_{-\epsilon}$ with $x_1(0)^{2}+\epsilon^{2s}x_2(0)^{2}=\tfrac{1}{\sqrt{2}}\epsilon^{s+1}-10\epsilon^{l}$ to reach $V_{+\epsilon}$.
Consider the hypersurface $E'\subset V_{-\epsilon}$,
\[
E'=\bigl\{(x,y)\in V_{-\epsilon}\colon \tfrac{1}{\sqrt{2}}\epsilon^{s+1}-5\epsilon^{l}\le
x_1^{2}+\epsilon^{2s}x_2^{2}\le \tfrac{1}{\sqrt{2}}\epsilon^{s+1}\bigr\}.
\]
and let
\[
E=\Omega^{T}(E').
\]
By Lemma \ref{e:T'bound}, the distance from $E$ to $V_{+\epsilon}$ is $\Ordo(\epsilon^{2p-3s-2+l})$.

Let
\begin{equation}\label{e:N'}
N'_\epsilon(\tilde\epsilon^{s+1})\subset\C^{n}
\end{equation}
denote a hyper-surface with the following properties, see Figure \ref{fig:interpol}.
\begin{figure}[ht]
\centering
\includegraphics[width=.4\linewidth]{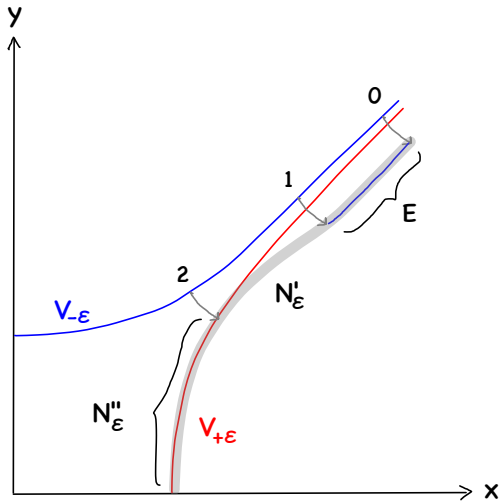}
\caption{A model for the boundary of the handle that is easy to attach. The points marked by $j=0,1,2$ illustrates the set of points with $x_1^{2}+\epsilon^{2s}x_2^{2}=\tfrac{1}{\sqrt{2}}\epsilon^{s+1}-5j\epsilon^{l}$}
\label{fig:interpol}
\end{figure}

\begin{itemize}
\item
It agrees with $V_{+\epsilon}$ in the bounded part $N''_{\epsilon}(\tilde\epsilon^{s+1})$ of $V_{+\epsilon}$  with boundary
\begin{equation}\label{e:N''}
\pa N''_\epsilon(\tilde\epsilon^{s+1})=\Omega^{T}\left(\bigl\{(x,y)\in V_{-\epsilon}\colon x_1^{2}+\epsilon^{2s}x_2^{2}= \tfrac{1}{\sqrt{2}}\epsilon^{s+1}-10\epsilon^{l}\bigr\}\right).
\end{equation}
\item
It agrees in a neighborhood of its boundary with a neighborhood in $E$ of the part of the boundary of $E$ corresponding to $x_1^{2}+\epsilon^{2s}x_2^{2}=\tfrac{1}{\sqrt{2}}\epsilon^{s+1}$.
\item
It lies in an $\Ordo(\epsilon^{2p-5s-4+l})$ $C^{2}$-neighborhood of $N_{+\epsilon}(\epsilon^{s+1})$.
\end{itemize}

The Liouville field $v$, see \ref{e:LiouvilleV}, on $\C^{n}$ then induces the contact form $2x\cdot dy+y\cdot dx$ on $N'_{\epsilon}(\tilde\epsilon^{s+1})$. Define
\[
F\colon N_{-\epsilon}(\tfrac{1}{\sqrt{2}}\epsilon^{s+1})-N_{-\epsilon}(\tfrac{1}{\sqrt{2}}\epsilon^{s+1}-5\epsilon^{l})\to
N'_\epsilon(\tilde\epsilon^{s+1})
\]
as
\begin{equation}\label{e:defF}
F(x,y)=\Omega^{T}(x,y).
\end{equation}

The Liouville vector field $v$ satisfies $L_{v}(2x\cdot dy+y\cdot dx)=2x\cdot dy+y\cdot dx$ and $(2x\cdot dy + y\cdot dx)(v)=0$. Thus
\begin{align}\label{e:Liouvillepullback}
F^{\ast}(\alpha)&=F^{\ast}(2x\cdot dy + y\cdot dx)\\\notag
&=e^{T}(2x\cdot dy + y\cdot dx)\\\notag
&=e^{T}\alpha.
\end{align}
We conclude from \eqref{e:Liouvillepullback} that $F$ takes the contact structure on $N_{-\epsilon}(\tfrac{1}{\sqrt{2}}\epsilon^{s+1})$ to the contact structure on $N'_\epsilon(\tilde\epsilon^{s+1})$, and that the contact form on the image which corresponds to $\alpha$ on $V_{-\epsilon}$ is $e^{-T}\alpha$ rather than $\alpha$ itself.

Let $N(\Lambda)$ denote the neighborhood of $\Lambda\subset Y$ which is identified with $N_{-\epsilon}(\tfrac{1}{\sqrt{2}}\epsilon^{s+1})$ and let $\hat N_{\epsilon}(\Lambda)\subset N(\Lambda)$ be the subset which corresponds to $N_{-\epsilon}(\tfrac{1}{\sqrt{2}}\epsilon^{s+1}-5\epsilon^{l})$ under this identification. Let $F$ denote the map in \eqref{e:defF}. Define the contact manifold $Y_\Lambda(\epsilon)$ as
\begin{equation}\label{e:dfnY_Lambda}
Y_\Lambda(\epsilon)=(Y-\hat N_{\epsilon}(\Lambda))\cup_F N'_\epsilon(\tilde\epsilon^{s+1}).
\end{equation}
The contact form on $Y_{\Lambda}(\epsilon)$ is given by $e^{T}\alpha$ on $Y-\hat N_{\epsilon}(\Lambda)$ and is agreeing with the contact form induced by the Liouville vector field on $N'_{\epsilon}(\tilde\epsilon^{s+1})$. In particular, this contact form agrees with $\alpha=2x\cdot dy + y\cdot dx$ in regions where $N'_{\epsilon}(\tilde\epsilon^{s+1})$ agrees with $V_{+\epsilon}$.

\begin{rmk}
The Legendrian co-core sphere $\Gamma$ in $Y_\Lambda(\epsilon)$ is the sphere
\begin{equation}\label{e:dfnGamma}
\Gamma= S_x(\epsilon^{p})\subset N'_{\epsilon}(\tilde\epsilon^{s+1}).
\end{equation}
\end{rmk}

\subsection{The construction of $X_\Lambda(\epsilon)$}\label{s:constrXLambda}
We define the symplectic cobordism $X_{\Lambda}(\epsilon)$ interpolating between $Y$ and $Y_{\Lambda}(\epsilon)$. We use notation as in Section \ref{s:constrYLambda}. Consider the bounded domain $H_\epsilon\subset\C^{n}$ bounded by the following three hyper-surfaces
\[
N_{-\epsilon}(\tfrac{1}{\sqrt{2}}\epsilon^{s+1})\cup
N'_{\epsilon}(\tilde\epsilon^{s+1})\cup
\left(\cup_{0\le t\le T}\,\,\Omega^{t}\left(\pa N_{-\epsilon}(\tfrac{1}{\sqrt{2}}\epsilon^{s+1})\right)\right),
\]
equipped with the standard symplectic form on $\C^{n}$. By \eqref{e:N'}, in a neighborhood of
\[
\cup_{0\le t\le T}\,\,\Omega^{t}\left(\pa N_{-\epsilon}(\tfrac{1}{\sqrt{2}}\epsilon^{s+1})\right),
\]
$H_\epsilon$ is (canonically) symplectomorphic to $U\times[0,T]\subset Y\times\R$, where $Y\times\R$ is the symplectization of $Y$ and where $U\subset Y$ is a small neighborhood of $\pa \hat N_{\epsilon}(\Lambda)$. We can thus attach the part $(Y-\hat N_{\epsilon}(\Lambda))\times[0,T]$ of the symplectization of $Y-\hat N_{\epsilon}(\Lambda)$ to $H_\epsilon$ in a canonical way. This gives a compact symplectic cobordism $X_{\Lambda}^{\circ}(\epsilon)$ connecting $Y_{\Lambda}(\epsilon)$ to $Y$ with symplectic form equal to $\omega_{\rm st}$ in $H_{\epsilon}\subset X_{\Lambda}^{\circ}(\epsilon)$ and equal to $e^{t}\alpha$ in the symplectization of $Y-\hat N_{\epsilon}(\Lambda)$.

In order to create a symplectic cobordism with cylindrical ends we attach the half symplectizations $Y\times[t_0,-\infty)$ and $Y_\Lambda(\epsilon)\times[0,\infty)$ to $X_{\Lambda}^{\circ}(\epsilon)$ along its concave- and convex boundary, respectively. More precisely, we use the Liouville flow to attach the symplectizations. Note that the symplectic form on $X_{\Lambda}^{\circ}(\epsilon)$ then extends in a canonical way. We call this non-compact symplectic cobordism $X_{\Lambda}(\epsilon)$ and denote its symplectic form $\omega$.

\section{Reeb chords, Reeb orbits, and a deformation of the basic handle}\label{S:Reebinbasic}
In this section we describe the Reeb chords of the co-core spheres $\Gamma$ and Reeb orbits in the contact manifold $Y_{\Lambda}(\epsilon)$.  We also define slightly deformed versions of the symplectic cobordism $X_{\Lambda}(\epsilon)$ and its contact boundary $Y_{\Lambda}(\epsilon)$ that are well suited for our study of holomorphic curves in later sections. 

Let $\Lambda\subset Y$ be a Legendrian link and consider $Y_\Lambda(\epsilon)$, constructed by applying the construction in Section \ref{s:constrYLambda} to all components of the link simultaneously and use notation as there. Note that on the piece $Y-\hat N_{\epsilon}(\Lambda)$ common to the two contact manifolds the contact forms agree (after scaling by $e^{T}=1+\Ordo(\epsilon^{2p-2s-2})$). The contact form induces an action functional on the space of curves in a contact manifold. If $\gamma$ is a curve in a contact manifold $Y$ with contact form $\alpha$, then the action of $\gamma$ is
\[
\act(\gamma)=\int_\gamma\alpha.
\]

Below we present three lemmas that describe in turn the new Reeb orbits in $Y_{\Lambda}(\epsilon)$, the Reeb chords of the co-core spheres $\Gamma\subset Y_{\Lambda}(\epsilon)$, and the new Reeb chords of a Legendrian submanifold $\Lambda_{0}\subset Y$ disjoint from $\Lambda$ considered as submanifold of $Y_{\Lambda}(\epsilon)$.

\subsection{Reeb orbits in $Y_{\Lambda}(\epsilon)$}
We consider first the most involved case of Reeb orbits. In order to describe them we use the following terminology. A {\em cyclic word} of Reeb chords of $\Lambda$ is an equivalence class of words $c_1\dots c_m$ of Reeb chords where two words are considered equivalent if one can be obtained from the other by cyclic permutation. A \emph{composable} cyclic word is a cyclic word which has a representative $c_{1}\dots c_{m}$ such that the end point of $c_j$ lies in the same component of $\Lambda$ as the start point of $c_{j+1}$ for $1\le j\le m$, where we use the convention $c_{m+1}=c_1$. If $c_1\dots c_m$ is a word of Reeb chords we write $(c_1\dots c_m)^{\circ}$ for the cyclic word represented by the word $c_1\dots c_m$. We define the action of a word of Reeb chords as
\[
\act(c_1\dots c_m)=\sum_{j=1}^{m}\act(c_j)
\]
and the action of a cyclic word as the action of any representative. 
We note that for generic contact form $\alpha$ on $Y$ the set of actions of Reeb orbits and of  words of Reeb chords is discrete. We call this set the {\em action set} of $\Lambda\subset Y$.

Let $\act_0>0$ be real number which is not in the action set of $\Lambda\subset Y$. Write $\mathcal{R}^{\circ}_{\act_0}(Y)$ for the Reeb orbits in $Y$ of action less than $\act_0$. Also write $\Omega^{\circ}\mathcal{R}_{\act_0}(\Lambda)$ for the set of composable cyclic words of Reeb chords of $\Lambda$.

\begin{lma}\label{l:neworbits}
There exists $\epsilon_0>0$ such that for $0<\epsilon<\epsilon_0$, there is a natural 1-1 correspondence 
\[ 
\mathcal{R}^{\circ}_{\act_0}(Y_{\Lambda}(\epsilon)) \ \approx \
\mathcal{R}^{\circ}_{\act_0}(Y) \ \cup \ \Omega^{\circ}{\mathcal{R}}_{\act_0}(\Lambda).
\]
In fact, any Reeb orbit $\gamma$ of $Y$ with $\act(\gamma)<\act_0$ lies outside an $\Ordo(\epsilon)$-neighborhood of $\Lambda$ with respect to the reference metric and is also a Reeb orbit of $Y_\Lambda(\epsilon)$.
\end{lma}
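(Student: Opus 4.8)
The plan is to split the Reeb orbits of $Y_\Lambda(\epsilon)$ into those that stay in $Y-\hat N_\epsilon(\Lambda)$ and those that enter the modified region $N'_\epsilon(\tilde\epsilon^{s+1})$, and to match the former with $\mathcal{R}^\circ_{\act_0}(Y)$ and the latter with composable cyclic words of Reeb chords of $\Lambda$. First I would establish the second sentence of the statement: since the contact forms on $Y$ and on $Y_\Lambda(\epsilon)$ agree (up to the scaling factor $e^T=1+\Ordo(\epsilon^{2p-2s-2})$) on $Y-\hat N_\epsilon(\Lambda)$, any closed Reeb orbit of $Y$ of action $<\act_0$ that stays a definite distance (in the reference metric of Remark \ref{r:refmetric}) from $\Lambda$ is unaffected. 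The point is that for generic $\alpha$ there are only finitely many Reeb orbits of $Y$ with action $<\act_0$, each is compact, and none passes through $\Lambda$ (as $\Lambda$ is Legendrian and generically disjoint from this finite set of orbits); hence for $\epsilon$ small enough they all lie outside the $\Ordo(\epsilon)$-neighborhood $\hat N_\epsilon(\Lambda)$ and are literally unchanged, giving the inclusion $\mathcal{R}^\circ_{\act_0}(Y)\hookrightarrow\mathcal{R}^\circ_{\act_0}(Y_\Lambda(\epsilon))$.

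Next I would analyze orbits that enter the handle region. Using the explicit Reeb flow \eqref{e:Reebflow} on $V_{+\epsilon}$ and the description of $N'_\epsilon(\tilde\epsilon^{s+1})$ as a $C^2$-small perturbation of $N_{+\epsilon}(\epsilon^{s+1})$, the key observation is that inside the handle the $x$-component of the flow grows like $\cosh(\sqrt 2 t)$ (with the slower $\epsilon^{2s}$-rate in the $\C^{n-1}$-directions), so a Reeb trajectory cannot remain in the handle indefinitely with bounded action; it must exit through $\pa N_{+\epsilon}$, travel through the region identified via $F$ with a neighborhood of $\Lambda$ in $Y$ (where, by Lemma \ref{l:ellnbhd} and the construction in Section \ref{s:constrYLambda}, the contact form is the $1$-jet form $\alpha_{\rm st}=dz-p\,dq$ on $J^1(S^{n-1})$ with Reeb field $\pa_z$), and then either close up or re-enter the handle. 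A trajectory through the jet-neighborhood that starts and ends on the cospheres of the handle corresponds, by the standard correspondence between Reeb chords in $J^1(S^{n-1})$ and Reeb chords of the $0$-section $\Lambda$, to a single Reeb chord of $\Lambda$ (up to the $\Ordo(\epsilon)$ distortion, which is negligible for the purpose of matching actions below $\act_0$); concatenating these, a closed orbit that enters the handle region exactly corresponds to a composable cyclic word $c_1\cdots c_m$, the cyclic ambiguity coming precisely from the choice of where to cut the closed orbit. One checks actions match to leading order: $\act(\gamma)=\sum_j\act(c_j)+\Ordo(\epsilon^{\text{small}})$, so for $\act_0$ not in the action set and $\epsilon$ small the correspondence is action-preserving in the relevant range.

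Then I would verify this map is a bijection onto the complement of $\mathcal{R}^\circ_{\act_0}(Y)$: injectivity follows because a composable cyclic word reconstructs the sequence of handle-passages and jet-neighborhood chords uniquely, and surjectivity because every new orbit must pass through the handle (an orbit avoiding $\hat N_\epsilon(\Lambda)$ is an old orbit) and its decomposition into handle-arcs and chord-arcs is forced. Finally I would assemble the two pieces into the stated disjoint-union correspondence $\mathcal{R}^\circ_{\act_0}(Y_\Lambda(\epsilon))\approx\mathcal{R}^\circ_{\act_0}(Y)\cup\Omega^\circ\mathcal{R}_{\act_0}(\Lambda)$, with $\epsilon_0$ chosen small enough that: the old orbits stay outside $\hat N_\epsilon(\Lambda)$; the handle-transit estimates hold; and no coincidences of action with $\act_0$ are introduced.

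The main obstacle I expect is the careful bookkeeping of the handle-transit arcs: one must show that, up to the controlled $C^2$-small errors between $N'_\epsilon$ and $V_{+\epsilon}$, every Reeb segment passing through the handle enters and exits through the cosphere $S_x(\epsilon^p)$-region in a way that glues cleanly to a genuine Reeb chord of $\Lambda$ in $J^1(S^{n-1})$, and that the action is controlled well enough to respect the cutoff $\act_0$. This is essentially a quantitative version of the hyperbolic dynamics in \eqref{e:Reebflow} combined with Lemma \ref{l:flowthrhandle}, and it is where all the choices of exponents $s$, $p$, $l$ in \eqref{eq:paraml} are used to make the error terms sub-leading; the remaining verifications (finiteness of old orbits, genericity, discreteness of the action set) are routine.
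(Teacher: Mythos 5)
Your reduction — old orbits stay outside $\hat N_\epsilon(\Lambda)$ and new orbits correspond to composable cyclic words — correctly identifies the structure of the problem and matches the paper's outline, but there is a genuine gap at the heart of the argument. You establish the \emph{forward} direction: a closed Reeb orbit of $Y_\Lambda(\epsilon)$ that enters the handle converges (as $\epsilon\to 0$) to a concatenation of Reeb chords of $\Lambda$, and the cyclic ambiguity is precisely the choice of base point. What you have not proved is that for each composable cyclic word $(c_1\cdots c_m)^\circ$ there \emph{exists} a Reeb orbit realizing it, and that this orbit is \emph{unique}. Your last paragraph asserts ``injectivity follows because a composable cyclic word reconstructs the sequence \dots and surjectivity because every new orbit must pass through the handle and its decomposition is forced,'' but this only shows the forward map is well defined and that its image is contained in the set of cyclic words; it says nothing about existence or uniqueness of the periodic solution to the underlying dynamical system. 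Knowing that an orbit must pass near $c_1$, then $c_2$, etc., in cyclic order does not by itself produce a closed trajectory, nor does it prevent two distinct closed trajectories from shadowing the same word.

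The paper closes this gap with a fixed-point argument that your proposal does not contain. One builds a return map $F\colon D_1^+\times H^-\to D_1^+\times H^-$ by alternately flowing through the handle (via the Liouville conjugation and $\Phi_\epsilon$) and through $Y-\tilde N_\epsilon(\Lambda)$, tracking how fibered neighborhoods $I_k, J_k$ of the chord endpoints transform. Lemma~\ref{l:flowthrhandle} and the transversality of the linearized Reeb flow at chord endpoints show that the forward return map $F$ strictly contracts the fiber ($p$-) direction; the nested intersection $\bigcap_k F^{(k)}(I_1)$ is nonempty because the flow sends closed sets to closed sets, yielding a fixed point $(q_0,p_0)$ whose $p$-coordinate is unique. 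Running the argument with the backward Reeb flow gives a map $G$ that contracts the $q$-direction, forcing uniqueness of $q_0$ as well and hence of the orbit. This two-sided contraction is exactly the quantitative content you flagged as ``the main obstacle'' but then did not supply; without it the claimed bijection with $\Omega^\circ\mathcal{R}_{\act_0}(\Lambda)$ is not established. The first sentence of your argument (old orbits are unaffected) and the observation that handle-transits degenerate to chords are fine and match the paper; it is the existence-and-uniqueness step that needs to be filled in.
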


\begin{pf}
It is clear that any Reeb orbit in $Y$ of action $<\act_0$ stays outside an $\Ordo(\epsilon)$-neighborhood of $\Lambda$ for $\epsilon$ small enough. Also, it is straightforward to check that any Reeb orbit in $Y_\Lambda(\epsilon)$ which goes through the handle converges to a cyclic word of Reeb chords as $\epsilon\to 0$. It thus remains to show that there exists a unique orbit for each cyclic word for $\epsilon$ sufficiently small. We carry out the proof in the case when $\Lambda$ has only one connected component, the multi-component case is only notationally more difficult. 

Consider the construction of $Y_{\Lambda}(\epsilon)$, see \eqref{e:dfnY_Lambda}. Recall that $\hat N_{\epsilon}(\Lambda)\subset Y$ denotes the subset which corresponds to $N_{-\epsilon}(\tfrac{1}{\sqrt{2}}\epsilon^{s+1}-5\epsilon^{l})\subset V_{-\epsilon}$ under the identification of a neighborhood of $\Lambda\subset Y$ with a neighborhood of the core sphere $S_{y}(\epsilon^{p})\subset V_{-\epsilon}$. Let $\tilde N_{\epsilon}(\Lambda)\subset\hat N_{\epsilon}(\Lambda)$ denote the subset which corresponds to $N_{-\epsilon}(\tfrac{1}{\sqrt{2}}\epsilon^{s+1}-10\epsilon^{l})$.

By construction, the Reeb flow on $Y_{\Lambda}(\epsilon)$ can be described as follows for Reeb flow lines which come close to the co-core sphere $\Gamma$.
\begin{itemize}
\item[$(1)$] Follow the Reeb flow in $Y-\tilde N_{\epsilon}(\Lambda)$ until the flow line hits 
$$
\pa \tilde N_{\epsilon}(\Lambda)\approx\pa N_{-\epsilon}(\tfrac{1}{\sqrt{2}}\epsilon^{s+1}-10\epsilon^{l}).
$$
\item[$(2)$] Continue along the Liouville flow starting in $\pa N_{-\epsilon}(\tfrac{1}{\sqrt{2}}\epsilon^{s+1}-10\epsilon^{l})$ until it hits 
$\pa N''_\epsilon(\tilde\epsilon^{s+1})\subset V_{+\epsilon}$, see \eqref{e:N''}.
\item[$(3)$] Continue along the Reeb flow in $N''_\epsilon(\tilde\epsilon^{s+1})$ until $\pa N_\epsilon''(\tilde\epsilon^{s+1})$ is hit.
\item[$(4)$] Continue along the backwards Liouville flow starting in
$\pa N_\epsilon''(\tilde\epsilon^{s+1})$ until 
$$
\pa N_{-\epsilon}(\tfrac{1}{\sqrt{2}}\epsilon^{s+1}-10\epsilon^{l})\approx\pa\tilde N_{\epsilon}(\Lambda)
$$ 
is hit.
\item[$(5)$] Repeat $(1)-(4)$.
\end{itemize}

Consider a cyclic word of Reeb chords $(c_1\dots c_m)^{\circ}$. Write $c_j^{+}$ and $c_j^{-}$ for the endpoint of the Reeb chord $c_j$ where the Reeb vector field is outward respectively inward pointing. The boundary $\pa \tilde N_{\epsilon}(\Lambda)$ has the form $\Lambda\times S$ where $S\approx S_x(r)$, $r=\epsilon^{s+1}+\Ordo(\epsilon^{l})$ is the {\em fiber $(n-1)$-sphere}, see \eqref{eq:corenotation}. The Reeb flow on $Y$ is inward pointing on a half sphere $H_{-}\subset S$ and outward pointing on the other half sphere $H^{+}\subset S$. We write $(q,p)$ for points in $\Lambda\times S$.

Consider sets $D_{j}^{+}\times H^{-}$ and $D_{j}^{-}\times H^{+}$, where $D_j^{+}\subset \Lambda$ and $D_j^{-}\subset\Lambda$ are fixed disk neighborhoods of $c_j^{+}$ and of $c_{j}^{-}$, respectively.
Let $(q,p)\in D_{j}^{+}\times H^{-}$. Let $\Omega^{t}$ denote the Liouville flow. Then $\Omega^{T}(q,p)\in \pa N_{\epsilon}''(\tilde\epsilon^{s+1})$, see \eqref{e:N''}. Let $\tau_{\epsilon}(q,p)$ denote the time for which the Reeb flow $\Phi_{\epsilon}^{t}$ in $N_{\epsilon}''(\tilde\epsilon^{s+1})\subset V_{+\epsilon}$ starting at $\Omega^{T}(q,p)$ hits $\pa N_{\epsilon}''(\tilde\epsilon^{s+1})$, i.e.
\[
\Phi_{\epsilon}^{\tau_{\epsilon}(q,p)}(\Omega^{T}(q,p))\in\pa N_{\epsilon}''(\tilde\epsilon^{s+1}).
\]
Then $\Omega^{-T}(\Phi_{\epsilon}^{\tau_{\epsilon}(q,p)}(\Omega^{T}(q,p)))\in\Lambda\times H^{-}$. Define
\[
\Psi_{\epsilon}(q,p)=\Omega^{-T}(\Phi_{\epsilon}^{\tau_{\epsilon}(q,p)}(\Omega^{T}(q,p)))
\]
and let $I_1=D_{1}^{+}\times H^{-}$.

Consider the set
\[
J_1=\Psi_{\epsilon}(I_1)\cap (D_{2}^{-}\times H^{+}).
\]
Lemma \ref{l:flowthrhandle} implies that if $A$ is the set of $(p,q)\in D_1^{+}\times H^{-}$ such that $\Psi_{\epsilon}(q,p)\in D_{2}^{-}\times H^{+}$ then $A$ fibers over $D_1^{+}$. Here the fiber of $A$ over $q\in D_{1}^{+}$ consists of a neighborhood $D'$ of a point in $H^{-}$. Using the explicit formula \eqref{e:Reebflow} it is then straightforward to check that  $J_1$ fibers over $D_2^{-}$ with fiber over $q\in D_{2}^{-}$ a neighborhood of a point in $H^{+}$.

Consider the Reeb flow $\Phi^{t}$ in $Y-\tilde N_{\epsilon}(\Lambda)$ starting at $(q,p)\in D_{2}^{-}\times H^{+}$. Let $\tau(p,q)$ denote the time for which $\Phi^{t}(p,q)$ hits $D_2^{+}\times H^{-}$, i.e.
\[
\Phi^{\tau(q,p)}(q,p)\in D^{2}\times H^{-}.
\]
Let $\Psi(q,p)=\Phi^{\tau(q,p)}(q,p)$, and define
\[
I_2=\Psi(J_1)\cap D_{2}^{+}\times H^{-}.
\]
Since the linearized Reeb flow in $Y$ takes the tangent space $T_{c_2^{-}}\Lambda$ of $\Lambda$ at $c_2^-$ to a subspace transverse to the tangent space $T_{c_2^{+}}\Lambda$ of $\Lambda$ at $c_2^{+}$, it follows that $I_2$ fibers over $H^{-}$. The fiber over a point $p\in H^{-}$ is a subset of $D_2^{+}$. The size of this subset depends on the angle between the image of $T_{c_2^{-}}\Lambda$ and $T_{c_2^{+}}\Lambda$ and on how well the Reeb flow is approximated by its linearization. Since both these quantities are uniformly controlled for Reeb chords below a fixed action, we conclude that the size of the fiber is controlled by some constant times the size of the fiber in $J_1$. Repeating this process we produce fibered subsets $J_2, I_3,J_3,\dots,J_m$ with the property that $J_k=\Psi_{\epsilon}(I_k)\cap D_{j+1}^{-}\times H^{+}$ and $I_{k+1}=\Psi(J_k)\cap D_{k+1}^{+}\times H^{-}$. With sizes of fibers controlled as above in each step.

Consider the flow image $\Psi(J_m)$ inside $I_1=D_1^{+}\times H^{-}$. It is a subset which fibers over $H^{-}$ with fibers which are contained in $D_1^{+}$. Let $F\colon D_{1}^{+}\times H^{-}\to D_{1}^{+}\times H^{-}$ denote the map which corresponds to the result of going once around the cyclic word, i.e., $F$ is the composition
\[
\begin{CD}
I_{1} @>{\Psi_{\epsilon}}>> J_{1} @>{\Psi}>> I_{2} @>{\Psi_{\epsilon}}>> \cdots @>\Psi>> I_{m} @>\Psi_{\epsilon}>> J_{m} @>\Psi>> I_1.
\end{CD}
\]
Then $F$ contracts fibers, and since the flow takes closed subsets to closed subsets, any finite intersection of the form
\[
\bigl(F(I_1)\bigr)\cap \bigl(F\circ F(I_1)\bigr)\cap\dots\cap \bigl(F\circ\dots\circ F(I_1)\bigr)
\]
is non-empty. It follows that the corresponding infinite intersection is non-empty as well. In particular, $F$ has a fixed point $(q_0,p_0)$ which corresponds to a Reeb orbit in $Y_\Lambda(\epsilon)$. Moreover, since the map contracts sizes of fibers it follows that the $p$-coordinate $p_0$ of $(q_0,p_0)$ is unique.

Repeating the procedure using instead the backwards Reeb flow and letting $G\colon D^{+}_1\times H^{-}\to D^{+}_1\times H^{-}$ denote the map which corresponds to going once around we find, using an identical argument, a fixed point $(q_0',p_0')$ for $G$ the $q$-coordinate of which is unique. Since fixed points of $G$ correspond to Reeb orbits in $Y_{\Lambda}(\epsilon)$ as well, the lemma follows. \end{pf}

\subsection{Reeb chords of Legendrian submanifolds of $Y_{\Lambda}(\epsilon)$}
We next consider the case of Reeb chords on the co-core link $\Gamma\subset Y_{\Lambda}(\epsilon)$. In order to describe the result we introduce the following notation. Assume that the components of the attaching link $\Lambda$ are $\Lambda_{1},\dots,\Lambda_{k}$. Then the co-core link $\Gamma$ has corresponding components $\Gamma_{1},\dots,\Gamma_k$, where $\Lambda_{j}$ and $\Gamma_{j}$ lies in the same handle. Let $\mathcal{R}_{\act_0}^{ij}(\Gamma)$ denote the Reeb chords of $\Gamma$ that start on $\Gamma_i$ and ends on $\Gamma_j$ and of action at most $\act_{0}$. Similarly, let  $\Omega\mathcal{R}_{\act_0}^{ij}(\Lambda)$ denote the set of composable words $c_1\dots c_m$ of Reeb chords of $\Lambda$ where the start point of $c_1$ lies in $\Lambda_i$ and the end point of $c_m$ in $\Lambda^{j}$, and where the composable means that the end point of $c_s$ lies in the same component of $\Lambda$ as the start point of $c_{s+1}$, for $s=1,\dots,m-1$.

\begin{lma}\label{l:newchords}
Let $\act_0>0$ be real number which is not in the action set of $\Lambda\subset Y$. Then there exists $\epsilon_0>0$ such that for $0<\epsilon<\epsilon_0$, there is a natural 1-1 correspondence 
\[ 
\mathcal{R}_{\act_0}^{ij}(\Gamma) \ \approx \
\Omega\mathcal{R}_{\act_0}^{ij}(\Lambda).
\]
\end{lma}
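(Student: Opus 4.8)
The plan is to run essentially the same dynamical fixed-point argument as in the proof of Lemma \ref{l:neworbits}, but now with the Reeb flow replaced by the Reeb flow relative to the Legendrian co-core link $\Gamma$, so that the closed-orbit condition is replaced by a chord-endpoint condition. First I would record the local picture of $\Gamma=\bigcup_j S_x(\epsilon^p)$ inside $N'_\epsilon(\tilde\epsilon^{s+1})$ and observe, using \eqref{e:ReebV}, that the Reeb vector field $R_\alpha$ along $\Gamma$ is transverse to $\Gamma$ (it points in the $\pa_{y_1},\pa_{y_2}$ directions while $\Gamma$ lies in the $x$-subspace), so Reeb chords with endpoints on $\Gamma$ are isolated and, for action $<\act_0$, any such chord that enters a handle must, as in Lemma \ref{l:neworbits}, pass alternately through the $(1)$--$(4)$ regions: Reeb flow in $Y-\tilde N_\epsilon(\Lambda)$, Liouville flow in, Reeb flow in $N''_\epsilon(\tilde\epsilon^{s+1})$, Liouville flow out. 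The only difference from the orbit case is at the two ends: instead of closing up, the chord starts on $S_x(\epsilon^p)\subset$ the $i$-th handle and ends on $S_x(\epsilon^p)\subset$ the $j$-th handle.

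Next I would set up the combinatorial correspondence. A Reeb chord of $\Gamma$ from $\Gamma_i$ to $\Gamma_j$ which makes $m$ passes through handles determines, in the limit $\epsilon\to 0$, a sequence of Reeb chords $c_1,\dots,c_m$ of $\Lambda$ (one for each excursion of type $(1)$--$(4)$, tracked by Lemma \ref{l:flowthrhandle} applied to the handle geometry), and the composability condition — endpoint of $c_s$ in the same $\Lambda$-component as startpoint of $c_{s+1}$ — is exactly the condition that the Reeb flow of $Y-\tilde N_\epsilon(\Lambda)$ can carry the exit point of pass $s$ to the entrance point of pass $s+1$; the constraint that $c_1$ starts on $\Lambda_i$ and $c_m$ ends on $\Lambda_j$ comes from the requirement that the chord begins and ends on the co-core spheres of handles $i$ and $j$. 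Conversely, given a composable word $c_1\dots c_m\in\Omega\mathcal{R}^{ij}_{\act_0}(\Lambda)$, I would construct the chord by the same nested-intersection scheme: start with a disk $I_1 = D_1^+\times H^-$ of initial points near $c_1^+$ in the $i$-th fiber sphere, alternately apply the handle-transition map $\Psi_\epsilon$ (from Lemma \ref{l:flowthrhandle}, which contracts one factor) and the ambient Reeb transition map $\Psi$ near each Reeb chord $c_s$ (whose linearization is transverse by the same genericity used in Lemma \ref{l:neworbits}, controlling fiber sizes below action $\act_0$), obtaining nested fibered sets $I_1\supset \Psi(J_m)$-image, etc. The key structural difference from the orbit case: here there is no cyclic closure requirement, so instead of seeking a fixed point of a round-trip map $F$ I am solving a boundary-value problem — the chord's initial point is free in the $x$-coordinate on $\Gamma_i$ (an $(n-1)$-parameter family) and its terminal point must land on $\Gamma_j$ (an $(n-1)$-dimensional constraint), and a transversality/contraction count shows these meet in a single point for $\epsilon$ small.

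Concretely, I would argue that the composition of transition maps along the word defines, for each choice of starting point $x\in S_x(\epsilon^p)\subset\Gamma_i$, a flow trajectory whose terminal point traces out a map to $Y_\Lambda(\epsilon)$, and that the condition ``terminal point lies on $\Gamma_j$'' cuts out, by the contraction-of-fibers property established stepwise exactly as in Lemma \ref{l:neworbits} (the handle map contracts the $H$-fiber, the ambient linearized Reeb flow is transverse between $T\Lambda$'s), a unique solution: the finite intersections $F^{(1)}\cap F^{(2)}\cap\cdots$ of the iterated flow images with the target are non-empty and nested by the closed-map property, hence the infinite intersection is a single point, whose uniqueness follows because the relevant contracted coordinate is pinned down. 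Running the backwards Reeb flow from $\Gamma_j$ and repeating pins down the complementary coordinate, as in the orbit proof. Finally I would note that the action bookkeeping, $\act(c)\to\sum_s\act(c_s)$ as $\epsilon\to 0$ (the handle and Liouville-flow contributions being $\Ordo(\epsilon^{2p-2s-2})$ and hence negligible by the bound \eqref{e:Tbound}), guarantees that the correspondence respects the action cutoff $\act_0$ for $\epsilon<\epsilon_0$, and that chords not entering any handle are precisely the empty-word case, which does not occur for $\Gamma$ since $\Gamma$ lies inside the handles. I expect the main obstacle to be the careful verification that the fiber-size estimates compose without blowing up over a word of length $m$ — this is handled exactly as in Lemma \ref{l:neworbits} by the uniform control of linearized Reeb return maps below action $\act_0$, together with the explicit formula \eqref{e:Reebflow} for the handle transition — and the bookkeeping that the two endpoint conditions (start on $\Gamma_i$, end on $\Gamma_j$) match the word-endpoint data $(i,j)$.
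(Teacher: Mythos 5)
Your approach is essentially the same as the paper's: trace the flow image of $\Gamma$ alternately through handles (via the Liouville/handle transition maps) and through $Y-\tilde N_\epsilon(\Lambda)$ (via the ambient Reeb flow), keeping track of the fibering over $D_k^\pm$ and $H^\mp$ via Lemma \ref{l:flowthrhandle} and the transversality of the linearized Reeb flow, and observe that the resulting $(n-1)$-dimensional flow image of $\Gamma_i$ meets $\Gamma_j$ in a single point. You also correctly isolate the key structural difference from Lemma \ref{l:neworbits}: a chord is a boundary-value problem, not a closed-up loop. That is exactly the simplification the paper records with the phrase ``simpler (we need not use iteration).''

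The one place your write-up drifts is in the final existence/uniqueness argument, where you invoke ``the finite intersections $F^{(1)}\cap F^{(2)}\cap\cdots$ of the iterated flow images'' and ``the infinite intersection is a single point,'' and then add a pass of the backward flow from $\Gamma_j$ ``as in the orbit proof.'' Neither step is needed here, and strictly speaking the infinite-intersection construction does not apply: for a chord there is no return map $F$ to iterate, only a finite composition of the $m$ transition maps applied once. The paper's proof stops after a single forward pass — after traversing the word, the $(n-1)$-dimensional flow image fibering over $H^-$ contains a point whose continuation inside the handle is directed toward the core and hence hits $\Gamma_j$, and transversality of the linearized transitions gives uniqueness directly. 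Importing the fixed-point/iteration and backward-flow machinery from the orbit case is harmless redundancy, but it obscures precisely the simplification that makes this lemma easier than Lemma \ref{l:neworbits}; if you streamline to the single forward pass you recover the paper's argument.
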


\begin{pf}
The proof is similar to the proof of Lemma \ref{l:neworbits} but simpler (we need not use iteration). It is easy to see that any Reeb chord of $\Gamma$ converges to a word in the chords of $\Lambda$. In order to show existence and uniqueness we fix neighborhoods $D_j^{\pm}\times H^{\mp}$ of the Reeb chord end points. Furthermore we identify the boundary $\Lambda\times S$ of the neighborhood of $\Lambda\subset Y$ with the boundary of a neighborhood of $\Gamma\subset Y_{\Lambda}(\epsilon)$ using the Liouville flow without further mentioning.

Consider a Reeb flow line starting on $\Gamma$ and hitting $D_1^{-}\times H^{+}$. Note that the intersection of the flow image of $\Gamma$ with $D_1^{-}\times H^{+}$ is an $(n-1)$ dimensional subset which fibers over $D_1^{-}$. The image of this subset under the Reeb flow in $Y$ is a subset of $D_1^{+}\times H^{-}$ which fibers over $H^{-}$, since the linearized Reeb flow is transverse at endpoints. In particular, there is some point in the image which corresponds to the Reeb flow line the continuation of which would hit $\Lambda$. We then find a flow line in the handle which hits $D_2^{-}\times H^{+}$ and the image fibers over $D_2^{-}$ etc. When we reach the last Reeb chord of the word the flow image is an $(n-1)$-dimensional subset of $D_m^{+}\times H^{-}$ which fibers over $H^{-}$ and with some point corresponding to a flow line which would hit $\Lambda$. A flow line inside the handle starting at the point of the flow line which would hit $\Lambda$ is directed toward the center of the core-disk and will thus hit $\Gamma$. The lemma follows.
\end{pf}

\begin{proof}[Proof of Theorem \ref{t:Reebdyn}]
Theorem \ref{t:Reebdyn} is a restatement of Lemmas \ref{l:neworbits} and \ref{l:newchords}.	
\end{proof}

We next consider the case of a Legendrian submanifold $\Lambda_{0}\subset Y-\Lambda$ and note that this gives rise to a Legendrian submanifold $\Lambda_{0}^{+}\subset Y_{\Lambda}(\epsilon)$ provided $\epsilon>0$ is small enough. In order to describe Reeb chords of $\Lambda_{0}^{+}$ we introduce the notation $\Omega\mathcal{R}_{\act_0}(\Lambda_0;\Lambda;\Lambda_{0})$ for the set of words of composable Reeb chords $c_1\dots c_m$ such that the start point of $c_1$ and the endpoint of $c_m$ lies on $\Lambda_0$, such that other endpoints lie in $\Lambda$ with the end point of $c_s$ in the same component as the start point of $c_{s+1}$. 

\begin{lma}\label{l:newchordsL_0}
Let $\act_0>0$ be real number which is not in the action set of $\Lambda\subset Y$. Then there exists $\epsilon_0>0$ such that for $\epsilon<\epsilon_0$, there is a natural 1-1 correspondence
\[  
\mathcal{R}_{\act_0}(\Lambda_{0}^{+}) \ \approx \ \Omega\mathcal{R}_{\act_0}(\Lambda_0;\Lambda;\Lambda)
\]
In fact any Reeb chord $c$ of $\Lambda_0$ with $\act(c)<\act_0$ lies outside an $\Ordo(\epsilon)$-neighborhood of $\Lambda$ with respect to the reference metric and is also a Reeb chord of $\Lambda_{0}^{+}$ (and corresponds to the composable word $c$).
\end{lma}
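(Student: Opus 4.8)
The plan is to follow the templates of the proofs of Lemmas~\ref{l:neworbits} and~\ref{l:newchords}. As in Lemma~\ref{l:newchords} no iteration is needed, and the only feature not already present in that argument is that the two outermost legs of a broken chord begin and end on $\Lambda_0$ rather than on a component of $\Lambda$ or on $\Gamma$; this is handled by observing that those legs are ordinary Reeb chords of the link $\Lambda_0\cup\Lambda$ in $Y$ and so are unaffected by the surgery.

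\textbf{Chords avoiding the surgery region.} Since $\Lambda_0$ is disjoint from $\Lambda$, for $\epsilon$ small $\Lambda_0$ lies in $Y-\hat N_{\epsilon}(\Lambda)$, where the contact form of $Y_{\Lambda}(\epsilon)$ equals $e^{T}\alpha$ for a fixed constant $T$ with $e^{T}=1+\Ordo(\epsilon^{2p-2s-2})$, see Section~\ref{S:Reebinbasic}. Hence a Reeb chord of $\Lambda_0$ of action $<\act_0$ whose flow line never meets $\hat N_{\epsilon}(\Lambda)$ is, up to the constant rescaling, a Reeb chord of $\Lambda_0^{+}$ of action differing by the factor $1+\Ordo(\epsilon^{2p-2s-2})$, and conversely. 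As $\act_0$ is not in the (discrete) action set, for $\epsilon$ small this identifies the chords of $\Lambda_0$ of action $<\act_0$ lying outside an $\Ordo(\epsilon)$-neighborhood of $\Lambda$ with the chords of $\Lambda_0^{+}$ of action $<\act_0$ of the same kind; these are exactly the length-one words in $\Omega\mathcal{R}_{\act_0}(\Lambda_0;\Lambda;\Lambda)$, which is also the content of the ``in fact'' statement.

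\textbf{Chords through the handle.} For the easy direction, the SFT/Arzel\`a--Ascoli compactness argument of Lemmas~\ref{l:neworbits} and~\ref{l:newchords}, together with the description of the Reeb flow through the handle recalled in the proof of Lemma~\ref{l:neworbits}, shows that any Reeb chord of $\Lambda_0^{+}$ of action $<\act_0$ whose flow line enters the handle region converges, as $\epsilon\to0$, to a broken chord consisting of a Reeb chord from $\Lambda_0$ to $\Lambda$, a composable word of Reeb chords of $\Lambda$, and a Reeb chord from $\Lambda$ back to $\Lambda_0$; since the action of the limit differs from that of the chord by $o(1)$, the limit is a word $c_1\cdots c_m\in\Omega\mathcal{R}_{\act_0}(\Lambda_0;\Lambda;\Lambda)$, and this defines the map $\mathcal{R}_{\act_0}(\Lambda_0^{+})\to\Omega\mathcal{R}_{\act_0}(\Lambda_0;\Lambda;\Lambda)$. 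For the inverse, fix a word $c_1\cdots c_m$ with $m\ge2$ and run the one-pass flow bookkeeping of Lemma~\ref{l:newchords}: identify $\pa\tilde N_{\epsilon}(\Lambda)\approx\Lambda\times S$ with $S=H^{+}\cup H^{-}$ the fiber sphere split into the half-spheres on which the ambient Reeb flow points out/in, fix disk neighborhoods $D_j^{\pm}\subset\Lambda$ of the endpoints of the $c_j$ that lie on $\Lambda$ and a disk neighborhood in $\Lambda_0$ of the free endpoints of $c_1$ and $c_m$, and track the forward Reeb flow image of $\Lambda_0$ in $Y_{\Lambda}(\epsilon)$. The first leg is the chord $c_1$ of the link $\Lambda_0\cup\Lambda$ in $Y$, which persists and is transversely cut out since $\Lambda_0$ and $\Lambda$ are disjoint and $\act_0$ is generic, so the flow image of $\Lambda_0$ meets the relevant $D^{\pm}\times H^{\mp}$ in an $(n-1)$-dimensional subset fibering over a disk in $\Lambda$; a handle passage governed by Lemma~\ref{l:flowthrhandle} and the explicit flow~\eqref{e:Reebflow} carries it to an $(n-1)$-dimensional subset fibering over a disk in $\Lambda$ near the start of $c_2$; the $Y$-passage along $c_2$, where transversality of the linearized Reeb flow at the endpoints of $c_2$ makes the image fiber over a half-sphere; and so on, verbatim as in Lemma~\ref{l:newchords}. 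After the last $Y$-passage the flow image of $\Lambda_0$ is an $(n-1)$-dimensional fibered subset of $D_m^{+}\times H^{-}$ containing a point whose continuation would hit $\Lambda$; the corresponding flow line inside the handle is directed toward the core disk, hits it, exits into $Y$, and follows $c_m$ to $\Lambda_0^{+}$. Existence comes from the presence at each stage of a point ``that would hit $\Lambda$'', so that the flow continues into the handle; uniqueness comes from the transversality/dimension count, which makes the final intersection a single point, together with the stepwise contraction of fiber sizes as in Lemmas~\ref{l:neworbits} and~\ref{l:newchords}. Running the same argument with the backward Reeb flow confirms uniqueness, and the action of the resulting chord converges to $\act(c_1)+\cdots+\act(c_m)<\act_0$, so it lies below $\act_0$ for $\epsilon$ small. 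Together with the length-one case, this produces a two-sided inverse to the limit map.

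\textbf{Main obstacle.} The real work is purely bookkeeping: one checks that the alternation of handle passages (controlled by Lemma~\ref{l:flowthrhandle} and~\eqref{e:Reebflow}) and ambient passages (controlled by the transversality of the linearized Reeb flow built into the genericity of the action set) preserves the fibered-subset structure and contracts fibers, so that the composite return map produces exactly one chord of $\Lambda_0^{+}$ for each word. Since this repeats the argument of Lemma~\ref{l:newchords}, the only new point being the persistence of the outermost legs as chords of $\Lambda_0\cup\Lambda$, I do not expect genuine new difficulties beyond organizing the multi-component notation.
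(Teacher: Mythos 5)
Your proposal is correct and follows exactly the route the paper intends: the paper's own proof consists of the single line ``Analogous to Lemma~\ref{l:newchords},'' and your write-out is a faithful expansion of that, treating the outermost legs (chords of $\Lambda_0\cup\Lambda$ persisting unchanged) together with the same one-pass fiber-contraction bookkeeping for the interior legs. Nothing in your account deviates from or adds to what the reference to Lemma~\ref{l:newchords} requires.
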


\begin{pf}
Analogous to Lemma \ref{l:newchords}.
\end{pf}

Finally we consider a mixture of the previous two results. Write $\mathcal{C}_{\act_0}(\Lambda_{0}^{+},\Gamma)$ and $\mathcal{C}_{\act_0}(\Gamma,\Lambda_{0}^{+})$ for the sets of Reeb chords of action at most $\act_0$ connecting $\Lambda_{0}^{+}$ to $\Gamma$ and vice versa. Write $\Omega\mathcal{R}_{\act_0}(\Lambda_0;\Lambda)$ for the set of words of composable Reeb chords $c_1\dots c_m$ of action at most $\act_0$ such that the start point of $c_1$ lies on $\Lambda_0$, such that other endpoints lie in $\Lambda$ with the end point of $c_s$ in the same component as the start point of $c_{s+1}$. Similarly, write $\Omega\mathcal{R}_{\act_0}(\Lambda;\Lambda_0)$ for the set of words of composable Reeb chords $c_1\dots c_m$ such that the end point of $c_m$ lies on $\Lambda_0$, such that other endpoints lie in $\Lambda$ with the end point of $c_s$ in the same component as the start point of $c_{s+1}$.

\begin{lma}\label{l:newchordsmix}
Let $\act_0>0$ be real number which is not in the action set of $\Lambda\subset Y$. Then there exists $\epsilon_0>0$ such that for $\epsilon<\epsilon_0$, there are a natural 1-1 correspondences
\begin{align*}
\mathcal{C}_{\act_0}(\Lambda_{0}^{+},\Gamma)& \ \approx \
\Omega\mathcal{R}_{\act_0}(\Lambda_0;\Lambda),\\
\mathcal{C}_{\act_0}(\Gamma,\Lambda_{0}^{+})& \ \approx \
\Omega\mathcal{R}_{\act_0}(\Lambda;\Lambda_0)
\end{align*}
\end{lma}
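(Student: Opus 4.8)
The plan is to reduce this to a combination of the arguments already used for Lemmas \ref{l:newchords} and \ref{l:newchordsL_0}: the behaviour of a chord near its $\Lambda_0^+$-endpoint is governed by the analysis in the proof of Lemma \ref{l:newchordsL_0}, the behaviour near its $\Gamma$-endpoint by the analysis in the proof of Lemma \ref{l:newchords}, and the middle portion of a chord --- which alternately runs through the handle and along Reeb chords of $\Lambda$ in $Y-\hat N_{\epsilon}(\Lambda)$ --- is treated exactly as in both of those proofs and in the proof of Lemma \ref{l:neworbits}. I would write out only the first correspondence $\mathcal{C}_{\act_0}(\Lambda_{0}^{+},\Gamma)\approx\Omega\mathcal{R}_{\act_0}(\Lambda_0;\Lambda)$; the second follows by running the Reeb flow backwards and interchanging the roles of the two endpoints, as in the last paragraph of the proof of Lemma \ref{l:neworbits}.

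\emph{Degeneration.} First I would note, as at the start of the proofs of Lemmas \ref{l:neworbits} and \ref{l:newchords}, that any Reeb chord of $\Lambda_0^{+}$ ending on $\Gamma$ of action $<\act_0$ spends all but an $\Ordo(\epsilon)$-fraction of its length outside the handle region, so that as $\epsilon\to 0$ its pieces in $Y-\hat N_{\epsilon}(\Lambda)$ converge to a composable word $c_1\dots c_m$ of Reeb chords of $\Lambda\cup\Lambda_0$ with the start point of $c_1$ on $\Lambda_0$, all intermediate endpoints on $\Lambda$, and the end point of $c_m$ on $\Lambda$ (this is the last chord, after which the flow line is caught by the core disk of the corresponding handle and hits $\Gamma$); that is, an element of $\Omega\mathcal{R}_{\act_0}(\Lambda_0;\Lambda)$. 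This uses only that $\Lambda_0$ is disjoint from $\Lambda$ (so $\Lambda_0$, and any chord of bounded action starting on it, stays outside an $\Ordo(\epsilon)$-neighbourhood of $\Lambda$ for $\epsilon$ small), the uniform bound on the time a flow line of bounded action spends inside the handle, and compactness of the space of chords of action $<\act_0$.

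\emph{Existence and uniqueness.} For the converse, fix $c_1\dots c_m\in\Omega\mathcal{R}_{\act_0}(\Lambda_0;\Lambda)$ and, as in the proof of Lemma \ref{l:newchords}, fix disk neighbourhoods $D_j^{\pm}$ of the chord endpoints and identify $\pa\hat N_{\epsilon}(\Lambda)$ with $\Lambda\times S$, $S=S_x(r)$, split into the half-spheres $H^{\pm}$ on which the Reeb field of $Y$ points inward/outward. Starting from the Reeb flow-out of $\Lambda_0$ in $Y$ --- an $(n-1)$-dimensional subset of $D_1^{+}\times H^{-}$ fibred over $D_1^{+}$, because $\Lambda_0$ is embedded and the linearized Reeb flow is injective --- I would alternately apply the handle-passage map $\Psi_{\epsilon}$ (controlled by Lemma \ref{l:flowthrhandle} together with the explicit formulas \eqref{e:Reebflow} and \eqref{e:Liouvilleflow}) and the Reeb-flow-in-$Y$ map $\Psi$ (controlled by transversality of the linearized Reeb flow at the endpoints of the $c_j$, available since $\act_0$ is not in the action set), exactly as in the proof of Lemma \ref{l:newchords}, obtaining at each stage a fibred subset of the appropriate $D_j^{\mp}\times H^{\pm}$ with fibre size controlled uniformly in $\epsilon$. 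After the handle passage following $c_m$ the image is directed toward the centre of the core disk of the last handle, hence meets $\Gamma$; this produces a chord of $\Lambda_0^{+}$ from $\Lambda_0$ to $\Gamma$ realizing the word. As in Lemma \ref{l:newchords}, and in contrast to Lemma \ref{l:neworbits}, no iteration is needed: each of these fibred subsets is a graph over its base, so the flow line hitting $\Gamma$ with the prescribed word is unique for $\epsilon$ small.

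\emph{Main obstacle.} I expect no genuinely new difficulty beyond bookkeeping, since Lemmas \ref{l:neworbits}, \ref{l:newchords}, \ref{l:newchordsL_0} already contain every analytic ingredient. The one point that uses the hypotheses of the present lemma rather than those of its predecessors is that the \emph{initial} fibred subset is now the Reeb flow-out of the Legendrian $\Lambda_0$, rather than of $\Lambda$ or of the co-core $\Gamma$, so one must check that it has the same fibration property and stays away from the handle region; this is immediate from $\Lambda_0\subset Y-\Lambda$ and the non-degeneracy of the chords $c_1\dots c_m$ guaranteed by $\act_0$ lying outside the action set. The second correspondence $\mathcal{C}_{\act_0}(\Gamma,\Lambda_{0}^{+})\approx\Omega\mathcal{R}_{\act_0}(\Lambda;\Lambda_0)$ is proved by the identical argument applied to the backwards Reeb flow.
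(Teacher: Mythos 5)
Your proposal follows exactly the route the paper intends: the paper's own proof is literally the single sentence ``Analogous to Lemma~\ref{l:newchords},'' and you correctly spell out that analogy --- alternating the handle-passage map $\Psi_{\epsilon}$ and the Reeb-flow-in-$Y$ map $\Psi$ between fibred subsets of $D_j^{\mp}\times H^{\pm}$, with the only novelty being that the starting fibred subset is produced by flowing $\Lambda_0$ along $c_1$ rather than by flowing $\Gamma$ out of the handle, and the reverse direction obtained by reversing the Reeb flow.

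One small wrinkle worth fixing: the flow-out of $\Lambda_0$ along $c_1$ into $D_1^{+}\times H^{-}$ should be fibred over $H^{-}$, not over $D_1^{+}$, and the reason is transversality of the linearized Reeb flow at $c_1^{+}$ (which carries $T_{c_1^{-}}\Lambda_0$ to a Lagrangian transverse to $T_{c_1^{+}}\Lambda$), not merely that $\Lambda_0$ is embedded and the linearized flow injective. This initial subset plays the role of the set appearing \emph{after} the first application of $\Psi$ in the proof of Lemma~\ref{l:newchords} (``a subset of $D_1^{+}\times H^{-}$ which fibers over $H^{-}$''), rather than the role of the handle-exit of $\Gamma$ (which fibers over $D_1^{-}$). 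Since your next step is nevertheless the handle passage $\Psi_{\epsilon}$, the correction does not change the alternating structure or the conclusion.
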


\begin{pf}
Analogous to Lemma \ref{l:newchords}.
\end{pf}

\begin{rmk}\label{r:paramindep}
Note that none of the Lemmas \ref{l:neworbits}, \ref{l:newchords}, \ref{l:newchordsL_0}, or \ref{l:newchordsmix} use any specific property of the identification map of the Legendrian sphere $\Lambda$ with the core sphere $S_y(\epsilon^{p})\subset V_{-\epsilon}$. In particular these results hold for any such identification map. We will use this fact below.
\end{rmk}

\section{Auxiliary cobordisms and basic holomorphic strips}\label{s:buildcob'}
In this section we construct auxiliary cobordisms that are adapted to Reeb chords, we equip them with a specific almost complex structure for which it is trivial to find certain basic holomorphic strips that are the starting point for our curve counting in Section \ref{A:D}.

More precisely, if $c$ is a Reeb chord of the attaching sphere $\Lambda$ and if $c'$ is the corresponding Reeb chord of the co-core sphere $\Gamma$, see Lemma \ref{l:newchords}, then we construct auxiliary cobordisms $X_{\Lambda}(\epsilon;c)$ connecting $Y$ to $Y_{\Lambda}(\epsilon;c)$, where $X_{\Lambda}(\epsilon;c)$ is a small deformation of the cobordism $X_{\Lambda}(\epsilon)$ constructed in Section \ref{s:constrXLambda} and $Y_{\Lambda}(\epsilon;c)$ is a small deformation of $Y_{\Lambda}(\epsilon)$ as constructed in Section \ref{s:constrYLambda}. Then we equip $X_{\Lambda}(\epsilon;c)$ with an almost complex structure in which there is an obvious holomorphic strip with boundary on $L\cup C$ that interpolates between $c'$ and $c$. Here $L$ is the core disk and $C$ is the co-core disk.

Also, in complete analogy with this construction just mentioned,  if $\Lambda_0\subset Y-\Lambda$ is a Legendrian submanifold, if $a$ is a Reeb chord connecting $\Lambda_0$ to $\Lambda$ or vice versa, and if $a'$ is the corresponding Reeb chord connecting $\Lambda_{0}^{+}$ and $\Gamma$, see Lemma \ref{l:newchordsmix}, then we define a cobordism $X_{\Lambda}(\epsilon,a)$ adapted to $a$ with an almost complex structure for which there is a holomorphic strip with boundary on $L\cup C\cup(\R\times\Lambda_0)$ interpolating between $a'$ and $a$.

\subsection{Deformed hypersurfaces}
We consider a perturbation of the defining equation for $V_{\pm\epsilon}$. Let $p$ and $s$ be integers with $5s+5<p$, as in Section \ref{s:constrYLambda}. Fix an integer $q$ with $p-s<q<p$ and let $\beta_{\epsilon}\colon \R_{+}\to[0,1]$ be a cut off function with the following properties:
\begin{align*}
\beta_{\epsilon}(r)&=0 \text{ for } 0\le r\le\epsilon^{q},\\
\beta_{\epsilon}(r)&=1 \text{ for } r\ge 2\epsilon^{q}, \text{ and}\\
|d^{(k)}\beta_{\epsilon}|&=\Ordo(\epsilon^{-kq}),
\end{align*}
where $d^{(k)}\beta_{\epsilon}$ denotes the $k^{\rm th}$ derivative of $\beta_{\epsilon}$.

Write $r_2=\sqrt{x_2^{2}+y^{2}_2}$ and define the hyper-surfaces $\hat V_{\pm\epsilon}\subset\C^n$ as
\begin{equation}\label{e:dfnV'}
\hat V_{\pm\epsilon}=\bigl\{(x,y)\in\C^n\colon 2 x_1^2-y_1^2+
\beta_{\epsilon}(r_2)\epsilon^{2s}\left(
2x_2^{2}-y_2^{2}\right)=\pm\epsilon^{2p}\bigr\}.
\end{equation}
Let
\[
I_{\pm\epsilon}\subset\hat V_{\pm\epsilon}
\]
denote the subset where $\beta_{\epsilon}(r_2)<1$.

We think of $\hat V_{\pm\epsilon}$ as a perturbation of $V_{\pm\epsilon}$. Note that the perturbed and the non-perturbed hypersurfaces agree for $(x_2,y_2)=(0,0)$ and for $r_2\ge \epsilon^{q}$.

Consider the Liouville vector field $v=2x\cdot \pa_x-y\cdot\pa_y$, see \eqref{e:LiouvilleV}, and the normal vector field $n$ of $\hat V_{\pm\epsilon}$ given by
\begin{align*}
n=&2x_1\pa_{x_1}-y_1\pa_{y_1} +\epsilon^{2s}\beta(r_2)(2x_2\cdot\pa_{x_2}-y_2\cdot\pa_{y_2})\\ &+\epsilon^{2s}(2x_2^{2}-y_2^{2})\beta'(r_2)\frac{1}{r_2}(x_2\cdot\pa_{x_2}+y_2\cdot\pa_{y_2}).
\end{align*}
Then
\begin{align*}
n\cdot v=
2x_1^{2}+y_{1}^{2}+\epsilon^{2s}\beta(r_2)(2x_2^{2}+y_{2}^2)
+\epsilon^{2s}(2x_2^{2}-y_2^{2})^{2}\beta'(r_2)\frac{1}{r_2}>0
\end{align*}
Hence $v$ is transverse to $\hat V_{\pm\epsilon}$ and induces the contact form $\alpha=2x\cdot dy+y\cdot dx$ on these hypersurfaces.

\subsection{Coordinates on Reeb chords and coordinates}\label{ss:attachingmap}
Let $c$ be a Reeb chord of action $\act(c)<\act_0$. Let $c^{-}$ denote the end point of $c$ where the Reeb vector field points into $c$ and let $c^{+}$ denote the other end point of $c$. We consider the cases when at least one of the endpoints lie in $\Lambda$ and the other either also lies in $\Lambda$ or in some other Legendrian submanifold $\Lambda_{0}$.

Fix a tubular neighborhood $U(c)\subset Y$ which consist of all points of distance $\ll\epsilon_{0}$ from $c$. After possibly shrinking $U(c)$, we find coordinates $(\xi^{-},\eta^{-},\zeta^{-})\in\C^{n-1}\times\R$, i.e., $\xi^{-}+i\eta^{-}\in\C^{n-1}$ and $\zeta^{-}\in\R$, on $U(c)$ with the following properties.
\begin{itemize}
\item The Reeb chord $c$ corresponds to
\[
\{(\xi^{-},\eta^{-},\zeta^{-})\colon \xi^{-}=\eta^{-}=0,\,\,0\le\zeta^{-}\le T\}.
\]
\item The neighborhood $U(c)$ corresponds to
\[
\{(\xi^{-},\eta^{-},\zeta^{-})\colon(\xi^{-})^{2}+(\eta^{-})^{2}\le r^{2},\; -r<\zeta^{-}<T+r\}.
\]
\item The contact form $\lambda$ on $Y$ is given by
\[
\lambda = d\zeta^{-} -\eta^{-}\cdot d\xi^{-}.
\]
\item The Legendrian submanifold $\Lambda$ at the start point $c^{-}$ of $c$ is given by
\[
\{(\xi^{-},\eta^{-},\zeta^{-})\colon\eta^{-}=0,\zeta^{-}=0\}.
\]
\end{itemize}
The existence of such coordinates follows from a standard application of Moser's lemma.

In the case that the endpoint $c^{+}$ lies in $\Lambda$ then we next change $\Lambda$ by a small Legendrian isotopy supported in an small neighborhood of $c^{+}$ in the following way, see Figure \ref{fig:rotate}. 
\begin{figure}[ht]
\centering
\includegraphics[width=.40\linewidth]{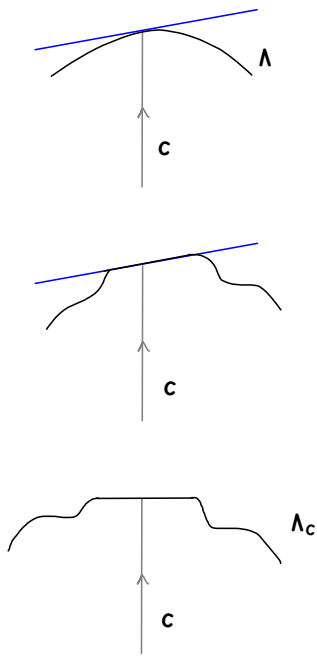}
\caption{The three pictures illustrates the start point, an intermediate instance, and the end point of the isotopy from $\Lambda$ to $\Lambda_{c}$.}
\label{fig:rotate}
\end{figure}

The Reeb flow is generic with respect to $\Lambda$, and therefore the tangent space $T_{c^{+}}\Lambda$ corresponds to a Lagrangian subspace of $\C^{n-1}\approx\{(\xi^{-},\eta^{-},\zeta^{-})\colon\zeta^{-}=T\}$ which is transverse to the subspace
\[
\{(\xi^{-},\eta^{-},\zeta^{-})\colon\zeta^{-}=T,\,\,\eta^{-}=0\}.
\]
Change $\Lambda$ by a Legendrian isotopy of small support which first makes it agree with its tangent space at $c^{+}$ and which then rotates this tangent space in $\C^{n-1}$ (the shortest way) to the subspace
\[
\{(\xi^{-},\eta^{-},\zeta^{-})\colon \zeta^{-}=0,\,\,\xi^{-}=0\}.
\]
We write $\Lambda_{c}$ for the Legendrian submanifold which results from this definition. Then $c$ is still a Reeb chord of $\Lambda_{c}$ which agrees with
\[
\{(\xi^{-},\eta^{-},\zeta^{-})\colon \xi^{-}=0,\zeta^{-}=T\}
\]
near $c^{+}$. Furthermore, if the support of the Legendrian isotopy is sufficiently small, then Reeb chords of $\Lambda$ and $\Lambda_{c}$ of action $<\act_0$ agree.

Let $(\xi^{+},\eta^{+},\zeta^{+})\in\C^{n-1}\times\R$ be coordinates similar to those above but based at the end point $c^{+}$. In these coordinates $\Lambda_{c}$ at $c^{+}$ (in the case that $c^{-}\in\Lambda_{0}$, we take $\Lambda_{c}=\Lambda$ without deformation) corresponds to
\[
\{(\xi^{+},\eta^{+},\zeta^{+})\colon\zeta^{+}=T,\,\,\eta^{+}=0\},
\]
if $c^{-}\in \Lambda_{c}$ the at $c^{-}$ $\Lambda_{c}$ corresponds to
\[
\{(\xi^{+},\eta^{+},\zeta^{+})\colon\zeta^{+}=0,\,\,\xi^{+}=0\},
\]
and the contact form on $Y$ is expressed as
\[
\lambda=d\zeta^{+}-\xi^{+}\cdot d\eta^{+}.
\]

In case both endpoints lie in $\Lambda$ then the coordinates on $U(c)$ based at $c^{+}$ and at $c^{-}$ respectively are related by the coordinate change
\begin{equation}\label{e:coordch-to+}
\begin{cases}
\zeta^{+}&=\zeta^{-}-\eta^{-}\cdot\xi^{-}\\
\xi^{+}&=\eta^{-}\\
\eta^{+}&=-\xi^{-}
\end{cases}
\end{equation}
with inverse
\begin{equation}\label{e:coordch+to-}
\begin{cases}
\zeta^{-}&=\zeta^{+}-\eta^{+}\cdot\xi^{+}\\
\xi^{-}&=-\eta^{+}\\
\eta^{-}&=\xi^{+}
\end{cases}.
\end{equation}

\subsection{Identification maps, cobordisms, and Reeb dynamics}\label{sec:identifications}
In Section \ref{s:constrXLambda} we constructed the cobordism $X_{\Lambda}(\epsilon)$ and the new contact manifold $Y_{\Lambda}(\epsilon)$ by identifying $\Lambda\subset Y$ with the core sphere $S_{y}(\epsilon^{p})\subset V_{-\epsilon}$. In this section we will use that construction with specific identification maps which are adapted to a fixed Reeb chord $c$ of $\Lambda$. 
We denote the cobordism and new contact manifold corresponding to the attaching map adapted to the Reeb chord $c$ $X_{\Lambda}(\epsilon;c)$ and $Y_{\Lambda}(\epsilon;c)$, respectively. Further we will construct cobordisms $\hat X_{\Lambda}(\epsilon;c)$ which are small deformations of $X_{\Lambda}(\epsilon;c)$ by identifying $\Lambda_{c}$ with the core sphere $\hat S_{y}(\epsilon^{p})\subset \hat V_{-\epsilon}$. (We use the notation $\hat S_{y}(\epsilon^{p})=\hat V_{-\epsilon}\cap i\R^{n}$.) There are parallel but simper constructions adapted to Reeb chords $a$ between $\Lambda$ and $\Lambda_{0}$.

The specifics of the identifications of $\Lambda$ and $\Lambda_{c}$ with the corresponding core spheres $S_{y}(\epsilon^{p})$ and $\hat S_{y}(\epsilon^{p})$, respectively, are as follows. Note that both $S_{y}(\epsilon^{p})\subset V_{-\epsilon}\subset\C^{n}$ and $\hat S_{y}(\epsilon^{p})\subset \hat V_{-\epsilon}\subset\C^{n}$ are subsets of $i\R^{n}$. Let
\[
y=(y_1,y_2)\in\R\times\R^{n-1}
\]
be coordinates on $i\R^{n}$ and write $y_{2}=(y_{2;1},\dots,y_{2;n-1})$. Then, by definition of $\hat V_{-\epsilon}$,
\[
S_{y}(\epsilon^{p})\cap \{y\colon |y_2|>2\epsilon^{q}\}=
\hat S_{y}(\epsilon^{p})\cap \{y\colon |y_2|>2\epsilon^{q}\}.
\]
We use identification maps $\phi_{c}\colon\Lambda\to S_{y}(\epsilon^{p})$ and $\hat \phi_{c}\colon\Lambda\to \hat S_{y}(\epsilon^{p})$ with the following properties, see Figure \ref{fig:chordendpoints}:
\begin{itemize}
\item $\phi_{c}(c^{\pm})=\hat\phi_{c}(c^{\pm})=(\mp\epsilon^{p},0)$.
\item If $b$ is a Reeb chord of $\Lambda$ with $\act(b)\ne \act(c)$ then
\[
\phi_{c}(b^{\pm})=\hat\phi_{c}(b^{\pm})=(y_1(b^{\pm}),y_2(b^{\pm})),
\]
where $y_1(b^{\pm})\ne 0$ and $|y_{2;1}(b^{\pm})|>\epsilon^{p-s+1}$. 
\end{itemize}

\begin{figure}[ht]
\centering
\includegraphics[width=.55\linewidth]{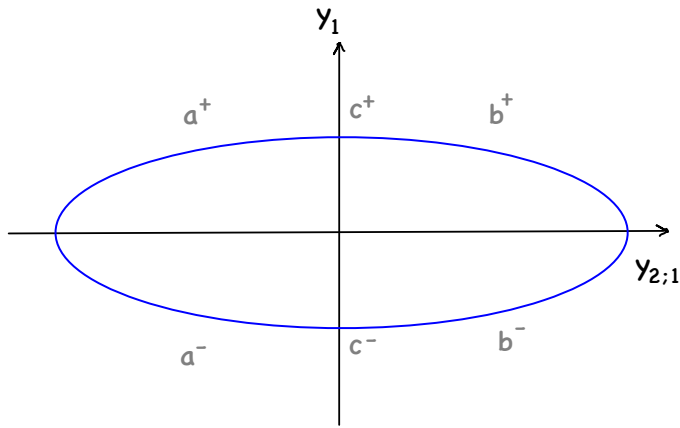}
\caption{We parameterize the Legendrian sphere $\Lambda$ in such a way that Reeb chord endpoints lie at special points in the model of the handle. The endpoints of the distinguished Reeb chord lie in the middle in the flat region, other Reeb chord endpoints lie to the left or right of it in the $y_{2;1}$-direction, as illustrated for the chords $a$ and $b$ in the picture.}
\label{fig:chordendpoints}
\end{figure}

We repeat the constructions of $Y_\Lambda(\epsilon)$ and of $X_{\Lambda}(\epsilon)$ in Sections \ref{s:constrYLambda} and \ref{s:constrXLambda}, respectively, on the one hand using hypersurfaces $V_{\pm\epsilon}$ as there and on the other using $\hat V_{\pm\epsilon}$ instead of $V_{\pm\epsilon}$. More precisely, we write $X_{\Lambda}(\epsilon;c)$ and $\hat X_{\Lambda}(\epsilon;c)$ for the cobordisms constructed using the identification maps $\phi_{c}\colon\Lambda\to S_{y}(\epsilon^{p})$ and $\hat\phi_{c}\colon\Lambda_{c}\to \hat S_{y}(\epsilon^{p})$, respectively, and we write $Y_{\Lambda}(\epsilon;c)$ and $\hat Y_{\Lambda}(\epsilon;c)$ for the new contact manifolds which are convex boundaries of $X_{\Lambda}(\epsilon;c)$ and $\hat X_{\Lambda}(\epsilon;c)$, respectively. We also write $\Gamma$ and $\hat{\Gamma}$ for the co-core spheres in $Y_{\Lambda}(\epsilon)$ and $\hat Y_{\Lambda}(\epsilon)$, respectively. 

As was pointed out in Remark \ref{r:paramindep} the Reeb chords of $\Gamma$ and the Reeb orbits in $Y_{\Lambda}(\epsilon)$ are independent of the parametrization of the attaching sphere. It follows from this that the results on Reeb orbits and Reeb chords in Section \ref{S:Reebinbasic} hold for $\Gamma$ and $Y_{\Lambda}(\epsilon)$ as defined by the attaching maps in the present section as well.
(Note that our small deformations of $\Lambda$ do not introduce any new Reeb chords)
In fact, it is not hard to see that the corresponding results hold for $\hat{\Gamma}$ and $\hat Y_{\Lambda}(\epsilon)$ as well. More precisely, we have the following:

\begin{lma}
Lemmas  \ref{l:neworbits}, \ref{l:newchords}, \ref{l:newchordsL_0}, and \ref{l:newchordsmix} hold with $\Gamma=\hat\Gamma$ and $Y_{\Lambda}(\epsilon)=\hat Y_{\Lambda}(\epsilon)$.
\end{lma}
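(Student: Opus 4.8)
The plan is to compare the deformed hypersurfaces $\hat V_{\pm\epsilon}$ with $V_{\pm\epsilon}$ and then to check that the proofs of Lemmas~\ref{l:neworbits}, \ref{l:newchords}, \ref{l:newchordsL_0} and \ref{l:newchordsmix} use only features of the Reeb and Liouville dynamics in the handle that are unchanged, or only $C^{1}$-perturbed, by replacing $V_{\pm\epsilon}$ with $\hat V_{\pm\epsilon}$. By \eqref{e:dfnV} and \eqref{e:dfnV'}, $\hat V_{\pm\epsilon}$ coincides with $V_{\pm\epsilon}$ outside $I_{\pm\epsilon}=\{r_2<2\epsilon^{q}\}$, and using $|d^{(k)}\beta_\epsilon|=\Ordo(\epsilon^{-kq})$ together with $|2x_2^{2}-y_2^{2}|\le 2r_2^{2}$ on the support of $\beta_\epsilon-1$ one checks that $\hat V_{\pm\epsilon}$ is a $C^{2}$-small deformation of $V_{\pm\epsilon}$ concentrated near the $y_1$-axis, which it meets along the common locus $\{x_2=y_2=0\}$. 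The Liouville field $v$ of \eqref{e:LiouvilleV} and the induced contact form $\alpha=2x\cdot dy+y\cdot dx$ are literally unchanged; the transversality $n\cdot v>0$ already checked when $\hat V_{\pm\epsilon}$ was introduced shows that the Liouville flow still carries $\hat V_{-\epsilon}$ to $\hat V_{+\epsilon}$, and the attaching time $T$ of Section~\ref{s:constrYLambda} with its bounds \eqref{e:Tbound} and \eqref{e:T'bound} goes through with the same exponents. Hence the construction of $\hat Y_{\Lambda}(\epsilon)$ and $\hat X_{\Lambda}(\epsilon;c)$ is formally identical to the one based on $V_{\pm\epsilon}$, and the parametrization-independence of Remark~\ref{r:paramindep} continues to hold.

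Next I would locate the deformation region relative to the Reeb chord endpoints. By the choice of identifications $\hat\phi_c$, $\phi_c$ in Section~\ref{sec:identifications}, the distinguished chord $c$ has endpoints at the poles $(\mp\epsilon^{p},0)$ of the core sphere, which sit in the flattened region $\{r_2<\epsilon^{q}\}$; there $\beta_\epsilon\equiv 0$, so the defining equation reduces to $2x_1^{2}-y_1^{2}=\pm\epsilon^{2p}$, the hypersurface $\hat V_{\pm\epsilon}$ is a product of the $n=1$ model with a ball in $(x_2,y_2)$, the Reeb field has no $(x_2,y_2)$-component, and the flow through this part of the handle fixes $(x_2,y_2)$ and is the $n=1$ specialization of \eqref{e:Reebflow}. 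All other chord endpoints $b^{\pm}$ satisfy $|y_{2;1}(b^{\pm})|>\epsilon^{p-s+1}$, and since $p-s<q$ these may be assumed to lie outside $I_{-\epsilon}$, so the Reeb dynamics of $\hat V_{\pm\epsilon}$ and $V_{\pm\epsilon}$ agree near them. Only the thin transition shell $\epsilon^{q}\le r_2\le 2\epsilon^{q}$ is genuinely new, and there $\hat V_{\pm\epsilon}$ is $C^{1}$-close to $V_{\pm\epsilon}$.

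With this in place I would transcribe the proofs. The flow-through-the-handle statement underlying Lemma~\ref{l:flowthrhandle} --- that the exit map, projected to the fibre sphere, is a diffeomorphism from a hemisphere onto the punctured sphere, with the fibration properties used in the iteration --- holds for $\hat V_{\pm\epsilon}$: trivially in the flattened region from the explicit $n=1$ flow, and by a small-perturbation argument in the transition shell, using that $C^{1}$-small modifications of the Reeb field below a fixed action preserve the relevant transversality. Granting this, the proofs of Lemmas~\ref{l:neworbits}, \ref{l:newchords}, \ref{l:newchordsL_0} and \ref{l:newchordsmix} apply verbatim with $\Gamma=\hat\Gamma$ and $Y_{\Lambda}(\epsilon)=\hat Y_{\Lambda}(\epsilon)$: Reeb orbits and chords of $Y$, of $\Lambda_0$, and of $\hat\Gamma$ of action $<\act_0$ avoid the $\Ordo(\epsilon)$-neighborhood of $\Lambda$ in which all deformations live; orbits and chords through the handle converge as $\epsilon\to 0$ to (cyclic) words of Reeb chords of $\Lambda_c$, which coincide with those of $\Lambda$ below action $\act_0$ by Section~\ref{ss:attachingmap}; and the nested-compact-sets/contraction argument for existence and uniqueness uses only the handle flow just discussed and the unchanged Reeb flow on $Y-\hat N_\epsilon(\Lambda)$.

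I expect the only genuinely new point --- hence the main obstacle --- to be the handle-flow analysis near the distinguished chord, where the hypersurface really has been altered; but the flattening turns that local picture into a product with an explicit flow, so this is essentially bookkeeping. The one ancillary thing to watch is that the inequality $p-s<q<p$ is precisely what keeps $I_{-\epsilon}$ clear of the non-distinguished chord endpoints while still flattening a full neighborhood of the distinguished one.
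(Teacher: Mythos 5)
The paper's proof does not proceed by locating the deformation region relative to Reeb chord endpoints and arguing region-by-region; it instead compares the two Reeb flows directly via the flow-through-the-handle map. The paper rescales the elliptical cross-section of the gluing region $N_{\epsilon}''$ so its major axis equals~$1$ (minor axis then $\epsilon^{s}$), observes that $\hat V_{\pm\epsilon}$ corresponds to flattening the ellipse on a neighborhood of the minor axis of width $\epsilon^{q-1}$ with $q>3s$, and then estimates the difference of the flow maps through the handle crudely as (maximal flow time)$\times$(maximal difference of the rescaled Reeb fields)$=\Ordo(\epsilon^{3s-1})$, which is small compared to the relevant scale $\epsilon^{s}$. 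That is the whole content of the proof.

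Your proposal takes a genuinely different route, and it has a real gap exactly where it waves its hand. The crux is your claim that in the transition shell $\epsilon^{q}\le r_{2}\le 2\epsilon^{q}$ one can invoke that ``$C^{1}$-small modifications of the Reeb field below a fixed action preserve the relevant transversality.'' This is not automatic here: the flow time through the handle is of order $\log(\epsilon^{1-p})$, which is unbounded as $\epsilon\to 0$, so a $C^{1}$-small change of the vector field does not by itself give a small change of the flow map at the scale that matters. One needs the quantitative comparison the paper carries out: after rescaling, the flow-map difference is $\Ordo(\epsilon^{3s-1})$, and the target scale (the minor axis) is $\epsilon^{s}$, so the ratio tends to zero. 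Without some estimate of this type your ``small perturbation in the transition shell'' step is an assertion rather than an argument. Relatedly, your claim that the non-distinguished chord endpoints lie outside $I_{-\epsilon}$ ``since $p-s<q$'' is slightly off: on the core sphere $r_{2}=|y_{2}|\ge|y_{2;1}|>\epsilon^{p-s+1}$, so to clear $r_{2}<2\epsilon^{q}$ for small $\epsilon$ you need $p-s+1<q$, not just $p-s<q$; with integer $q$ the borderline case $q=p-s+1$ is not excluded. This is minor, but it shows that placing endpoints relative to $I_{\pm\epsilon}$ is not the decisive issue --- in fact the distinguished chord and every Reeb trajectory through the handle unavoidably pass through the deformed region, which is precisely why the paper compares flow maps rather than endpoint locations. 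Your observation that the flattened region $\{r_{2}<\epsilon^{q}\}$ is a literal product with an explicit $n=1$ flow is correct and attractive, and could be made into a complete proof, but only after supplementing the transition-shell step with a quantitative flow-map estimate of the kind the paper actually gives.
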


\begin{pf}
In order to compare the Reeb flows in $Y_{\Lambda}(\epsilon)$ and $\hat Y_{\Lambda}(\epsilon)$ we note that the new flow is determined by the flow through the handle viewed as a map from the boundary of a tubular neighborhood of the attaching sphere to itself. If we rescale the handle bounded by $V_{\pm\epsilon}$ so that the major axes of the ellipse in the boundary of the gluing region $N_{\epsilon}''(\epsilon^{s+1})$, see Section \ref{s:constrYLambda} for notation, equals $1$ then its minor axis is of length $\epsilon^{s}$. In this setting the $\hat V_{\pm\epsilon}$ corresponds to flattening the ellipse in a neighborhood of its minor axis of width $\epsilon^{q-1}$ where $q>3s$. Thus boundary regions of the attaching handles are likewise very close. 

Using the rescaled Reeb flow as in \eqref{e:Reebflow} and the counterpart for $\hat V_{\pm\epsilon}$ we find that the flow maps through the handle (estimated crudely as the maximal flow time times the norm of maximal difference between the vector fields) for the boundary rescaled as above are $\Ordo(\epsilon^{3s-1})$ which is small compared also to the minor axis. We conclude that the flow maps can be made arbitrarily close by taking $\epsilon$ sufficiently small. The lemma follows.     
\end{pf}

\begin{rmk}\label{r:dfnc'}
The Reeb chord of $\hat\Gamma\subset Y_{\Lambda}(\epsilon;c)$ which corresponds to the Reeb chord $c$ of $\Lambda\subset Y$ will be denoted $c'$.
\end{rmk}

We end this section by noting that the Reeb flow along $\hat V_{\pm\epsilon}$ admits a very simple description in the region where $r_2<\epsilon^{q}$. Here $\hat V_{\pm\epsilon}$ is a product,
\[
\hat V_{\pm\epsilon}=\gamma_{\pm\epsilon}\times D(\epsilon^{q})\subset \C\times\C^{n-1},
\]
where $\gamma_{\pm\epsilon}$ is the curve
\[
\gamma_{\pm\epsilon}=\{(x_1,y_1)\colon 2x_1^{2}-y_1^{2}=\epsilon^{2p}\}
\]
and where $D(\epsilon^{q})$ is a disk of radius $\epsilon^{q}$ in $\C^{n-1}$. Consequently, the Reeb flow lines in this region are of the form
\[
\gamma_{\pm\epsilon}\times\{p\},
\]
where $p\in D(\epsilon^{q})$. Defining a local coordinate $z$ in this region as
\[
z=\int_{\gamma_{-\epsilon}|_{[0,t]}} 2x_1dy_1+y_1dx_1,
\]
where $\gamma_{-\epsilon}(0)$ lies in the Legendrian core sphere. Then in coordinates $(z,x_2,y_2)\in\R\times\C^{n-1}$ on $\gamma_{\pm\epsilon}\times D(\epsilon^{q})$, the contact form on $\hat V_{\pm\epsilon}$ is given by
\begin{equation}\label{e:handlecoord}
dz-2x_2\cdot dy_2 +y_2\cdot dx_2.
\end{equation}

\subsection{An almost complex structure on $\hat X_{\Lambda}(\epsilon;c)$}\label{sec:acs}
Let $c$ be a Reeb chord of $\Lambda\subset Y$ and consider the cobordism $\hat X_{\Lambda}(\epsilon;c)$, see Section \ref{s:constrXLambda}. Let $\omega$ denote the symplectic form on $\hat X_{\Lambda}(\epsilon;c)$. In this section we endow $\hat X_{\Lambda}(\epsilon;c)$ with an almost complex structure compatible with $\omega$.

Consider the neighborhood $U(c)\subset Y$ of the Reeb chord $c$ as in Section \ref{ss:attachingmap} with coordinates $(\zeta^{-},\xi^{-},\eta^{-})\in\R\times\C^{n-1}$ in which the contact form is given by
\[
\lambda=d\zeta^{-}-\eta^{-}\cdot d\xi^{-}.
\]
Consider the contact planes $\krn(\lambda)$ in $U(c)$. If $\pi\colon \R\times\C^{n-1}\to\C^{n-1}$ denotes the projection then $\pi|_{\krn(\lambda)}$ is an isomorphism. Define the complex structure $\hat J$ on the contact planes by
\begin{equation}\label{e:dfnhatJ}
\hat J = \pi^{-1}\circ J_0\circ \pi,
\end{equation}
where $J_0$ denotes the standard complex structure on $\C^{n-1}$. We note that \eqref{e:dfnhatJ} in coordinates $(\zeta^{+},\xi^{+},\eta^{+})$ on $U(c)$ instead of $(\zeta^{-},\xi^{-},\eta^{-})$ leads to the same complex structure on $\krn(\lambda)$.

Using the complex structure $\hat J$ on $\krn(\lambda)$ we will construct an almost complex structure $J_{c}$ on $X_{\Lambda}(\epsilon;c)$ for sufficiently small $\epsilon$. Consider the neighborhood map $\Phi_{c}\colon N_{\epsilon}(\Lambda)\to \hat V_{-\epsilon}$ which agrees with the map $\phi_{c}\colon \Lambda\to \hat S_{y}(\epsilon^{p})$ on $\Lambda$.

Choose $\epsilon>0$ sufficiently small so that $U(c)$ contains
\[
\{(\zeta^{\pm},\xi^{\pm},\eta^{\pm})\colon (\xi^{\pm})^{2}+(\eta^{\pm})^{2}\le \epsilon^{2q}\}.
\]
Then the complex structure $\hat J$ of \eqref{e:dfnhatJ} gives a complex structure on the contact planes in the $\epsilon^{q}$-neighborhood of the central Reeb flow line
\[
\gamma_{\pm\epsilon}\times\{0\}
\]
in $\hat V_{\pm\epsilon}$.

Using $\hat J$ we define an almost complex structure $J_c$ on the compact part $\hat X^{\circ}_{\Lambda}(\epsilon,c)$, see Section \ref{s:constrXLambda} for notation, as follows. Consider the contact $(2n-2)$-plane field $\krn(\lambda)$ in the $\epsilon^{q}$-neighborhoods of $\gamma_{\pm\epsilon}\times\{0\}$ in $V_{\pm\epsilon}$. Extend this plane field to a $(2n-2)$-plane field $\Xi$ defined in an $\epsilon^{q}$-neighborhood $W(\epsilon^{q})$ of
\[
\hat X^{\circ}_{\Lambda}(\epsilon,c)\cap \{(x_1,y_1,x_2,y_2)\colon x_2=0,\,y_2=0\}
\]
with the property that the planes in $\Xi$ are everywhere transverse to the $(x_1,y_1)$-plane. To see that such an extension exists, note that the angle $\theta$ between the contact planes $\krn(\lambda)$ and the $(x_1,y_1)$-plane satisfies $\theta=\frac{\pi}{2}+\Ordo(\epsilon^{q})$ at all points in the $\epsilon^{q}$-neighborhood of $\gamma_{\pm\epsilon}\times\{0\}$.

Let $J_c$ be the unique almost complex structure on $W(\epsilon^{q})$ which has the following properties
\[
J_{c}(\pa_{x_1})=\pa_{y_1},\quad
J_{c}(\Xi)=\Xi,\quad\text{and }J_{c}|_{\Xi}=\pi^{-1}\circ J_0\circ \pi,
\]
where $\pi\colon \Xi\to\C^{n-1}$ is the isomorphism induced by the projection which forgets the $(x_1,y_1)$-coordinates.

Consider the almost complex structure $\hat J$ on the symplectizations of $\hat V_{\pm\epsilon}$ induced by the almost complex structure $\hat J$, see \eqref{e:dfnhatJ}, in the standard way. Then $J_c$ agrees with $\hat J$ when these are restricted to contact planes in $\epsilon^{q}$-neighborhoods of $c$ in $\hat V_{-\epsilon}$ and of $c'$ in $\hat V_{+\epsilon}$, respectively. Furthermore since both complex structures $\hat J$  and $J_c$ leaves the tangent $2$-planes spanned by the Reeb vector field and the symplectization direction invariant we find that after changing $J_c$ inside these planes only, $J_c$ and $\hat J$ agree completely in a neighborhood of
\[
(W(\epsilon^{q})\cap \hat V_{-\epsilon})\cup
(W(\epsilon^{q})\cap \hat V_{+\epsilon})\cup
\bigl(\cup_{0\le t\le T} \Omega^{t}(\pa N_{\epsilon}(\Lambda))\bigr),
\]
where $\Omega^{t}$ denotes the time $t$ Liouville flow as in \eqref{e:defF}.
Note in particular that tangent $2$-planes spanned by the Reeb vector field and the symplectization direction are $J_{c}$-complex also for the deformed complex structure $J_{c}$.

We thus have an almost complex structure $J_c$ in $W(\epsilon^{q})$ which agrees with the symplectization almost complex structure of $\hat V_{\pm\epsilon}$ near $W(\epsilon^{q})\cap V_{\pm\epsilon}$ and with the almost complex structure in $\hat V_{-\epsilon}\times[0,T]$ near $(\cup_{0\le t\le T} \Omega^{t}(\pa N(\Lambda))\bigr)$.

Let
\[
W^{\circ}(\epsilon^{q})= W(\epsilon^{q})\cup (\cup_{0\le t\le T} \Omega^{t}(U(c))\bigr).
\]
Using the symplectization to extend $J_c$ in an $\R_{\pm}$-invariant way we get an extension of $J_{c}$ to
\begin{equation}\label{e:dfnA(c)}
A(c)=
(U(c)\times\R_{-})\cup W^{\circ}(\epsilon^{q})\cup(U(c')\times\R_{+})\subset \hat X_{\Lambda}(\epsilon;c).
\end{equation}
Finally, we extend $J_c$ to an almost complex structure in the rest of the cobordism in such a way that it is adjusted to $\omega$. (We shall not need to consider details of that extension.)

\begin{lma}\label{l:lochol}
Let $u\colon D\to A(c)$ be a $J$-holomorphic map with image in $W(\epsilon^{q})$, in $U(c)$, or in $U(c')$. Let $\pi$ denote the projection to $\C^{n-1}$ (in either coordinate system) then $\pi\circ u$ is $J_0$-holomorphic.
\end{lma}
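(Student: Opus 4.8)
The plan is to observe that in each of the three regions $J_c$ was constructed from a $J_c$-invariant splitting of the tangent bundle into the kernel of $d\pi$ and a complement that $\pi$ carries $\C$-linearly to $(\C^{n-1},J_0)$, so that holomorphicity of $\pi\circ u$ becomes a formal consequence of holomorphicity of $u$.

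First I would treat the case $u(D)\subset W(\epsilon^q)$. By the construction in Section \ref{sec:acs}, over $W(\epsilon^q)$ the tangent bundle of $\C^n$ splits $J_c$-invariantly as $\Xi\oplus\Spa(\pa_{x_1},\pa_{y_1})$, with $J_c|_\Xi=\pi^{-1}\circ J_0\circ\pi$. Since $\pi\colon\C^n=\C\times\C^{n-1}\to\C^{n-1}$ forgets the first factor, $\krn d\pi=\Spa(\pa_{x_1},\pa_{y_1})$ and $d\pi$ restricts to a linear isomorphism $\Xi\to\C^{n-1}$. Writing $\pr_\Xi$ for the projection onto $\Xi$ along $\Spa(\pa_{x_1},\pa_{y_1})$, which commutes with $J_c$ because both summands are invariant, one has $d\pi=(d\pi|_\Xi)\circ\pr_\Xi$ and therefore
\[
d\pi\circ J_c=(d\pi|_\Xi)\circ J_c\circ\pr_\Xi=J_0\circ(d\pi|_\Xi)\circ\pr_\Xi=J_0\circ d\pi .
\]
Thus $d\pi$ intertwines $J_c$ and $J_0$, and composing with $du$ and the equation $du\circ j=J_c\circ du$ (with $j$ the complex structure on $D$) yields $d(\pi\circ u)\circ j=J_0\circ d(\pi\circ u)$, i.e. $\pi\circ u$ is $J_0$-holomorphic. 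Note that the argument uses only that $\Spa(\pa_{x_1},\pa_{y_1})$ is $J_c$-invariant (it is exactly $\krn d\pi$), not the precise action of $J_c$ on it, so it is insensitive to the deformations of $J_c$ inside that $2$-plane made in Section \ref{sec:acs}.

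The cases $u(D)\subset U(c)$ and $u(D)\subset U(c')$, inside the half-symplectizations $U(c)\times\R_-$ and $U(c')\times\R_+$ respectively, are handled by the identical argument with $\Xi$ replaced by the contact plane field $\krn(\lambda)$ and $\Spa(\pa_{x_1},\pa_{y_1})$ replaced by the tangent $2$-plane spanned by the Reeb field and the symplectization direction: by the construction of $J_c$ this $2$-plane is $J_c$-invariant and equals $\krn d\pi$ in the coordinates $(\zeta^\pm,\xi^\pm,\eta^\pm)$ together with the symplectization coordinate, while $\hat J|_{\krn(\lambda)}=\pi^{-1}\circ J_0\circ\pi$ by \eqref{e:dfnhatJ}, which as remarked after \eqref{e:dfnhatJ} gives the same structure in the $(\zeta^+,\xi^+,\eta^+)$- and $(\zeta^-,\xi^-,\eta^-)$-charts. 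I do not expect a genuine obstacle; the only subtlety to keep straight is that the coordinate projection $\pi$ is not itself the bundle projection onto $\Xi$ (or onto $\krn(\lambda)$), but it factors through it, and this factorization is exactly what produces the intertwining identity above.
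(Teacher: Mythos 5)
Your argument is correct and is essentially the same as the paper's, just packaged a bit more cleanly: the paper decomposes $du=du^{1}+du^{2}$ with respect to the $J_c$-invariant splitting and then evaluates $\pi\circ du+J_0(\pi\circ du)\circ j$ directly, whereas you first extract the pointwise intertwining identity $d\pi\circ J_c=J_0\circ d\pi$ from the same two facts (the splitting is $J_c$-invariant and the complementary $2$-plane is exactly $\krn\,d\pi$) and then compose with the Cauchy--Riemann equation; these are the same computation in a different order. Your added observation that the proof only uses $J_c$-invariance of the $2$-plane $\krn\,d\pi$ and is therefore unaffected by the modification of $J_c$ inside that plane made in Section~\ref{sec:acs} is accurate and worth keeping.
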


\begin{pf}
Write $du=du^{1}+du^{2}$ where $du^{1}$ takes values in $(t,\zeta^{\pm})$-plane if we use $U(c)$- or $U(c')$-coordinates and in the $(x_1,y_1)$-plane if we use $W(\epsilon^{q})$-coordinates, and where $du^{2}$ takes values in the contact plane if we use $U(c)$- or $U(c')$-coordinates and in $\Xi$ if we use $W(\epsilon^{q})$-coordinates. Since the $(t,\zeta^{\pm})$-plane and the $(x_1,y_1)$-plane are orthogonal to $\C^{n-1}$ in the respective coordinates, we have
\[
\pi\circ du^{2} = d(\pi\circ  u)
\]
and consequently,
\begin{align*}
\pi\circ du + J_0\circ (\pi\circ du)\circ j&=\pi(du^{2}+\pi^{-1}J_0\pi\circ du^{2}\circ j)\\
&=\pi(du^{2}+J_{c}\circ du^{2}\circ j)=0,
\end{align*}
since $u$ is $J_c$-holomorphic.
\end{pf}

\begin{rmk}
Recall that $\hat X_{\lambda}(\epsilon)$ contains two natural Lagrangian submanifolds both diffeomorphic to $\R^{n}$:
\begin{itemize}
\item $L$ which consists of $(i\,\R^n\cap \hat X_{\Lambda}^{\circ}(\epsilon))\cup (\Lambda\times\R_-)$.
\item $C$ which consists of $(\R^n\cap  \hat X_{\Lambda}^{\circ}(\epsilon))\cup (\Gamma\times\R_+)$.
\end{itemize}
Note also that $L\cap C$ consists of one point which corresponds to the origin in $\C^{n}$.
\end{rmk}

\begin{lma}\label{l:S(c)}
There exists a $J_{c}$-holomorphic strip $S_c\subset \hat X_{\Lambda}(\epsilon;c)$ with boundary on $C\cup L$, which is asymptotic to $c'$ at its positive puncture, which has two corners at $C\cap L$, which is asymptotic to $c$ at its negative puncture, which agrees with the Reeb chord strip of $c'$ in the upper end and that of $c$ in lower ends, respectively, and which consists of two quadrants in the $(x_1,y_1)$-plane inside $W(\epsilon^{q})$.
\end{lma}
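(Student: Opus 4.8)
The plan is to exhibit $S_c$ explicitly as the union of three pieces: the trivial Reeb chord strip over $c'$ in the positive cylindrical end $U(c')\times\R_+$, the trivial Reeb chord strip over $c$ in the negative end $U(c)\times\R_-$, and, inside the handle region $W(\epsilon^q)$, two holomorphic quadrants lying in the $2$-plane $P=\{(x,y)\colon x_2=y_2=0\}$. All three pieces will turn out to be $J_c$-holomorphic and to fit together smoothly because of the careful choices made in Sections~\ref{sec:identifications} and~\ref{sec:acs}.

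I would begin with the handle piece. Since $J_c(\pa_{x_1})=\pa_{y_1}$, the plane $P$ is $J_c$-invariant; identifying it with $\C$ via $x_1+iy_1$, the traces $C\cap P=\{y_1=0\}$ and $L\cap P=\{x_1=0\}$ are the real and imaginary axes, which are totally real and meet only at $0=C\cap L$. Take the two pieces of $S_c$ in $W(\epsilon^q)$ to be (truncations of) two opposite quadrants of $\C$; each is a holomorphic disk with one boundary arc on $C$, one boundary arc on $L$, and a single corner at the origin. Since such a map has vanishing $\C^{n-1}$-component and holomorphic $(x_1,y_1)$-component, Lemma~\ref{l:lochol} shows it is automatically $J_c$-holomorphic, so holomorphicity of the handle piece is immediate.

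Next I would verify that the quadrants leave $W(\epsilon^q)$ precisely along the Reeb chord strips of $c$ and $c'$, which is where the construction uses the specific identifications. By the choice of $\hat\phi_c$ in Section~\ref{sec:identifications} the endpoints $c^{\pm}$ lie at $(0,\mp\epsilon^p)\in\gamma_{-\epsilon}\subset\hat V_{-\epsilon}$ on the imaginary axis of $P$, and by Remark~\ref{r:dfnc'} together with Lemma~\ref{l:newchords} the endpoints of $c'$ lie on $\hat\Gamma\subset\hat V_{+\epsilon}$ on the real axis of $P$; the normalizations of $\Lambda_c$ and of the coordinates on $U(c)$, $U(c')$ from Section~\ref{ss:attachingmap} guarantee that near these four points both chords lie in $\{x_2=y_2=0\}$ and that their Reeb chord strips have one boundary arc on $L$ (for $c$) or on $C$ (for $c'$). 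At a point of $\hat V_{\pm\epsilon}$ with $x_2=y_2=0$ the plane $P$ is exactly the span of the Reeb vector field and the Liouville/symplectization direction, which by Section~\ref{sec:acs} is a $J_c$-complex $2$-plane on which $J_c$ agrees with the symplectization structure $\hat J$; hence near $W(\epsilon^q)\cap\hat V_{\pm\epsilon}$ the Reeb chord strips over $c$ and $c'$ also lie in $P=\C$ with the same totally real boundary conditions. Choosing the truncations of the quadrants appropriately, the quadrant pieces and the trivial strips coincide on these overlaps, so the three pieces glue to a single smooth strip $S_c$ in $\hat X_{\Lambda}(\epsilon;c)$; it is $J_c$-holomorphic because it is holomorphic on each set of an open cover of its domain. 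By construction it has a positive puncture asymptotic to $c'$, a negative puncture asymptotic to $c$, agrees with the respective Reeb chord strips in the two ends, and has exactly two corners, one in each quadrant piece, both mapping to $C\cap L$.

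I expect the main obstacle to be the bookkeeping in the previous paragraph: one must chase through the whole chain of identifications — the product structure $\hat V_{\pm\epsilon}=\gamma_{\pm\epsilon}\times D(\epsilon^q)$ and the handle coordinates \eqref{e:handlecoord}, the Moser coordinates $(\zeta^{\pm},\xi^{\pm},\eta^{\pm})$ on $U(c)$ and the Legendrian rotation producing $\Lambda_c$ in Section~\ref{ss:attachingmap}, the placement of the chord endpoints via $\hat\phi_c$, and the construction of $J_c$ in Section~\ref{sec:acs} — and check that they are mutually compatible in the transition regions, so that the Reeb chord strips over $c$ and $c'$ genuinely sit inside $P$ and agree with the quadrants along $\hat V_{\pm\epsilon}$, and so that following the $C$- and $L$-boundary arcs of the quadrants out of $W(\epsilon^q)$ one exits through $\hat V_{+\epsilon}$ near $c'$ and through $\hat V_{-\epsilon}$ near $c$ with the orientations that make the two quadrant pieces join up into the single strips over $c$ and over $c'$. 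Once this geometric compatibility is in place, holomorphicity, the asymptotics, the agreement with the chord strips in the ends, and the corner structure are all immediate from Lemma~\ref{l:lochol} and the definitions.
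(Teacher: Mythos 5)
Your proposal is correct and takes the same approach as the paper: the paper's proof is a single sentence, ``The tangent planes of $S_c$ are $J_c$-complex by construction,'' and your write-up simply fleshes out exactly which features of the constructions in Sections~\ref{ss:attachingmap}, \ref{sec:identifications}, and~\ref{sec:acs} make the tangent planes of the quadrants and the chord strips $J_c$-complex and make the pieces match along $\hat V_{\pm\epsilon}$. One small slip: you say the Reeb chord strips of $c$ and $c'$ have ``one boundary arc on $L$'' resp.\ ``on $C$''; in fact both boundary arcs of the $c$-strip lie on $L$ and both of the $c'$-strip on $C$ (the alternation between $C$- and $L$-arcs happens in the quadrants, not in the ends), but this does not affect the argument.
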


\begin{pf}
The tangent planes of $S_c$ are $J_{c}$-complex by construction.
\end{pf}

\subsection{Analogous cobordisms adapted to a mixed Reeb chord}\label{sec:mixedanalogue}
Let $a$ be a Reeb chord connecting $\Lambda_0$ to $\Lambda$ or vice versa and let $a'$ denote the corresponding Reeb chord connecting $\Lambda_0$ and $\Gamma$. Adapting the handle and the attaching map to the end point of $a$ which lies on $\Lambda$ so that it plays the role of $c^{\pm}$ in Section \ref{sec:identifications} we construct a cobordism $\hat X(\epsilon;a)$ connecting $\hat Y(\epsilon;a)$ to $Y$. Furthermore, we find, as in Section \ref{sec:acs}, a complex structure $J_{a}$ such that the following holds.

\begin{lma}\label{l:S(a)}
There exists a $J_{a}$-holomorphic strip $S_a\subset \hat X_{\Lambda}(\epsilon; a)$ with boundary on $(\Lambda_0\times\R)\cup C\cup L$, which is asymptotic to $a'$ at its positive puncture, which has one corner at $C\cap L$, which is asymptotic to $a$ at its negative puncture, which agrees with the Reeb chord strip of $a'$ in the upper end and that of $a$ in lower ends, respectively, and which consists of two quadrants in the $(x_1,y_1)$-plane inside $W(\epsilon^{q})$.
\end{lma}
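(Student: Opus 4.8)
The plan is to imitate the one-line proof of Lemma~\ref{l:S(c)}: exhibit $S_a$ as an explicit union of flat pieces and observe that their tangent planes are $J_a$-complex by the very construction of $J_a$ in Section~\ref{sec:mixedanalogue}. First I would write down the candidate. Inside the handle region $W(\epsilon^q)\subset\hat X_\Lambda(\epsilon;a)$ take the same two quadrants in the $(x_1,y_1)$-plane that appear in Lemma~\ref{l:S(c)}: one lying in $\{x_2=y_2=0\}$ with boundary on the core disk $L$ and on $\hat V_{-\epsilon}$, the other with boundary on the co-core disk $C$ and on $\hat V_{+\epsilon}$, glued to each other along the Liouville-swept locus $\bigcup_{0\le t\le T}\Omega^t(\pa N_\epsilon(\Lambda))$ and meeting at the single point $C\cap L$, which is the lone corner. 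Outside $W(\epsilon^q)$ I would cap this off, in the negative end $U(a)\times\R_-$, by the trivial Reeb chord strip over $a$ --- whose two boundary arcs lie on $\Lambda\times\R_-\subset L$ and on $\Lambda_0\times\R$ --- and, in the positive end $U(a')\times\R_+$, by the trivial Reeb chord strip over $a'$, with boundary on $\Gamma\times\R_+\subset C$ and on $\Lambda_0\times\R$. Topologically the result is a strip $\R\times[0,1]$ one of whose boundary arcs lies entirely on $\Lambda_0\times\R$ while the other runs from $L$ through the corner at $C\cap L$ to $C$; the difference from $S_c$ is precisely that only one endpoint of $a$ (and of $a'$) lies on the surgered Legendrian, which is why there is a single corner.

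Next I would check that $S_a$ is $J_a$-holomorphic. By the construction of Section~\ref{sec:acs}, transported to the mixed setting in Section~\ref{sec:mixedanalogue}, $J_a$ leaves the extended plane field $\Xi$ invariant and restricts on it --- in particular on the $(x_1,y_1)$-plane, where $J_a(\pa_{x_1})=\pa_{y_1}$ --- to $\pi^{-1}\circ J_0\circ\pi$, so the flat quadrants are $J_a$-complex; and, again as arranged there, $J_a$ coincides with the symplectization almost complex structure $\hat J$ on $\hat V_{\pm\epsilon}$ near $W(\epsilon^q)\cap\hat V_{\pm\epsilon}$ and with the product structure on $\hat V_{-\epsilon}\times[0,T]$ along the Liouville-swept locus. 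Hence the quadrants glue $C^\infty$-smoothly to the Reeb chord strips, which are $\hat J$-holomorphic in the cylindrical ends. Since the modification defining $\hat X_\Lambda(\epsilon;a)$ is supported near the $\Lambda$-endpoint of $a$, nothing is changed near the $\Lambda_0$-endpoint, so there the almost complex structure and the boundary condition $\Lambda_0\times\R$ are the standard cylindrical ones and the corresponding piece is literally a trivial Reeb chord strip. Thus every tangent plane of $S_a$ is $J_a$-complex, so $S_a$ is $J_a$-holomorphic, and by construction it has the asserted asymptotics at $a'$ and $a$, the claimed boundary conditions, one corner at $C\cap L$, and agrees with the Reeb chord strips in the two ends.

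The one point needing genuine care --- the same bookkeeping as in Section~\ref{sec:acs}, now with a harmless one-sided modification near $\Lambda_0$ --- is checking that the local models overlap compatibly, i.e.\ that $J_a$ really does agree with $\hat J$ on a full neighborhood of the transition regions (both near $\hat V_{\pm\epsilon}$ and along $\bigcup_{0\le t\le T}\Omega^t(\pa N_\epsilon(\Lambda))$), so that the flat quadrants and the Reeb chord strips assemble into a single smooth $J_a$-holomorphic strip rather than merely a piecewise-holomorphic surface. This rests on the estimate that the angle between $\krn(\lambda)$ and the $(x_1,y_1)$-plane is $\tfrac{\pi}{2}+\Ordo(\epsilon^q)$ throughout the $\epsilon^q$-neighborhood of the central flow lines, which permits the transversal extension of $\Xi$ and hence the interpolation of complex structures exactly as in the chord-$c$ case, so I do not expect any additional obstacle. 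Transversality of $S_a$ is not asserted here and is addressed later.
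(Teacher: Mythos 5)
Your argument is exactly the paper's: the proof of Lemma~\ref{l:S(a)}, like that of Lemma~\ref{l:S(c)}, is the single observation that the tangent planes of $S_a$ are $J_a$-complex by construction, which is what you verify. The additional care you take in spelling out the explicit pieces (two opposite quadrants in the $(x_1,y_1)$-plane joined at the single corner $C\cap L$, capped off by the trivial Reeb chord strips, with the $\Lambda_0$-side untouched by the surgery) and in checking the smooth overlap of $J_a$ with $\hat J$ near the transition regions is sound and consistent with the construction in Sections~\ref{sec:acs} and~\ref{sec:mixedanalogue}.
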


\begin{pf}
The tangent planes of $S_a$ are $J_{a}$-complex by construction.
\end{pf}

\section{Counting holomorphic disks}\label{A:D}
In this section we count holomorphic disks in the symplectic cobordism $X_{\Lambda}(\epsilon)$. We describe the exact types of disks below. Our main result shows that the count of these disks equals $\pm 1$. Our argument is as follows. First we show that the basic disks constructed in Sections \ref{sec:acs} and \ref{sec:mixedanalogue} are unique and transversely cut out. Then we glue all other curves inductively from these pieces. In doing so we must both deform the cobordism and look at splittings at the boundary of 1-dimensional moduli spaces. We are able to control these phenomena because of the small action of the disks that we consider. We will carry out the proof for Reeb chords connecting $\Lambda$ to itself and associated chords connecting $\Gamma$ to itself and orbits, and only state the results for the analogous but somewhat simpler case of chords connecting $\Lambda_{0}$ to $\Lambda$ and associated chords connecting $\Lambda_{0}^{+}$ to itself and $\Lambda^{+}$ to $\Gamma$.  

\subsection{Notation for disks}
Recall the following notation: $\Lambda\subset Y$ is the Legendrian attaching sphere, $\Gamma\subset Y_\Lambda(\epsilon)$ is the Legendrian co-core sphere, $L\subset X_{\Lambda}(\epsilon)$ is the Lagrangian $n$-plane consisting of $\Lambda\times\R_{-}$ and the core disk, and $C\subset X_{\Lambda}(\epsilon)$ for the Lagrangian $n$-plane which consists of $\Gamma\times\R+$ and the co-core disk. When studying disk with two positive punctures on $\Gamma$ below we will use three slightly different copies of $C$ (parallel copies, as in the definition of product in wrapped Floer cohomology, see \cite{EL}). More precisely, let $C=C_{0}$ and let $C_{1}$ be an $\epsilon$-shift of $C$ along the differential of a small function with one minimum near the intersection point $C\cap L$, and let $C_{2}$ be a similar $\epsilon^{2}$-shift of $C_{1}$. Then $\Gamma_{2}$ and $\Gamma_{1}$ are a small push offs of $\Gamma$ in the Reeb direction and the set of Reeb chords $\Gamma_{1}\to \Gamma_{0}$ and $\Gamma_{2}\to \Gamma_{1}$ are both canonically identified with the set of Reeb chords $\Gamma\to \Gamma$.  We will use the following terminology for disks below.
\begin{itemize}
\item A holomorphic disk $u\colon D\to X_{\Lambda}(\epsilon)$ is an {\em $(a;c_1\dots c_m)$-chord-chord disk} if it has the following properties:
\begin{itemize}
\item $u(\pa D)\subset L\cup C$.
\item $u$ has one positive boundary puncture mapping to the Reeb chord $a$ of $\Gamma$.
\item $u$ has two Lagrangian intersection punctures mapping to $L\cap C$.
\item $u$ has negative boundary punctures mapping to the Reeb chords $c_1,\dots,c_m$ of $\Lambda$. (Note that there is an induced order of the Reeb chords at the negative boundary punctures.)
\end{itemize}
\item A holomorphic disk $u\colon D\to X_{\Lambda}(\epsilon)$ is an {\em $(\alpha;c_1\dots c_m)$-orbit-chord disk} if it has the following properties:
\begin{itemize}
\item $u(\pa D)\subset L$.
\item $u$ has one positive interior puncture with asymptotic marker mapping to the Reeb orbit $\alpha$ with a marker on the underlying geometric orbit in $Y_{\Lambda}(\epsilon)$.
\item $u$ has negative boundary punctures mapping to the Reeb chords $c_1,\dots,c_m$ of $\Lambda$. 
\end{itemize}
\item A holomorphic disk $u\colon D\to \R\times Y_\Lambda(\epsilon)$ is a {\em $(a_1,a_2;b)$-chord disk} if it has the following properties:
\begin{itemize}
\item $u(\pa D)\subset \Gamma\times\R$.
\item $u$ has two positive boundary punctures mapping to the Reeb chords $\Gamma_{2}\to\Gamma_{1}$ and $\Gamma_{1}\to\Gamma_{0}$ corresponding to the Reeb chords $a_1,a_2$ of $\Gamma$
\item $u$ has one negative boundary puncture mapping to the Reeb chord $b$ of $\Gamma$. 
\end{itemize}
\item A holomorphic disk $u\colon D\to \R\times Y_{\Lambda}(\epsilon)$ is an {\em $(a,\alpha)$-chord-orbit disk} if it has the following properties:
\begin{itemize}
\item $u(\pa D)\subset \Gamma\times\R$.
\item $u$ has one positive boundary puncture mapping to the Reeb chord $a$ of $\Gamma$.
\item $u$ has one negative interior puncture mapping to the Reeb orbit $\alpha$ in $Y_{\Lambda}(\epsilon)$.
\end{itemize}
Representing the domain disk as the unit disk with the positive boundary puncture at $1$ and the interior puncture at the origin, there is an induced positive marker, the image of the positive real axis on $\alpha$.  
\end{itemize}

\subsection{Uniqueness of the $(c';c)$-chord-chord disk $S_c$}\label{s:unique}
Let $c$ be a Reeb chord of $\Lambda\subset Y$. Consider the neighborhood $U(c)\subset Y$ of $c$ with coordinates $(\xi^{\pm},\eta^{\pm},\zeta^{\pm})\in\C^{n-1}\times\R$. By definition of these coordinates they are related to the coordinates $(x_2,y_2,z)$ in the handle, see \eqref{e:handlecoord}, where they overlap as follows
\begin{align*}
z &=\zeta^{\pm} + \xi^{\pm}\cdot \eta^{\pm},\\
x_2&=-\eta^{\pm},\\
y_2&=\xi^{\pm}.
\end{align*}

Consider next the region $A(c)$ as in \eqref{e:dfnA(c)}. Topologically, $A(c)$ is homeomorphic to $S^{1}\times \R\times D^{2n-2}$, where $S^{1}$ corresponds to the Reeb orbit corresponding to the Reeb chord $c$. Consider a point $p$ in $A(c)$. We want to map $p$ into $\C^{n-1}$. To this end we may use either the projection $\pi^{-}$ or the projection $\pi^{+}$ corresponding to coordinates $(\xi^{-},\eta^{-},\zeta^{-})$ and $(\xi^{+},\eta^{+},\zeta^{+})$ and then using the change of coordinates to get to $\C^{n-1}\subset \C^{n}$ with coordinates $(x_2,y_2)$. We think of $\pi^{-}$ ($\pi^{+}$) as the result of translating $p$ backwards (forwards) along $c$ to $c^{-}$ (to $c^{+}$) and then projecting to $\C^{n-1}$. From the change of coordinates relating $(\xi^{-},\eta^{-},\zeta^{-})$ to $(\xi^{+},\eta^{+},\zeta^{+})$ we get
\begin{equation}\label{e:monodromy1}
\pi^{+}=J_0\circ \pi^{-}.
\end{equation}

Let $\pr\colon\tilde A(c)\to A(c)$ denote the $4$-fold cover of $A(c)$. Then we define a projection
\begin{equation}\label{e:dfnprojCn-1}
\pi\colon\tilde A(c)\to\C^{n-1}
\end{equation}
as follows. Pick a base point $\tilde p_0\in \tilde A(c)$. If $\tilde p\in\tilde A(c)$ then connect $\tilde p$ to $\tilde p_0$ with a path $\tilde\gamma$ and let $\pi(\tilde p)\in\C^{n-1}$ be the result of translating $\pr(\tilde p)$ along the path $\gamma=\pr(\tilde\gamma)$ and then projecting to $\C^{n-1}$. As mentioned above, each time we go around $S^{1}\subset A(c)$ the projection to $\C^{n-1}$ changes by multiplication with $J_{0}$ and it follows that $\pi\colon\tilde A(c)\to\C^{n-1}$ is well defined.

Let $u\colon D\to A(c)$ be a map from a Riemann surface. Lifting $u$ we get a map $\tilde u\colon \tilde D\to\tilde A(c)$, where $\tilde D$ is a (not necessarily non-trivial) $4$-fold cover of $D$.

Let $\tilde u^{\perp}=\pi\circ \tilde u\colon \tilde D\to \C^{n-1}$ and note that the complex structure on $D$ induces a complex structure of $\tilde D$.
\begin{lma}\label{l:tildemap}
If $u\colon D\to A(c)$ is $J_c$-holomorphic then $\tilde u^{\perp}\colon \tilde D\to\C^{n-1}$ is $J_0$-holomorphic.
\end{lma}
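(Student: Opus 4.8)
The plan is to combine the local statement of Lemma \ref{l:lochol} with the fact that the various local projections of $A(c)$ to $\C^{n-1}$ differ only by powers of $J_0$, which is exactly why passing to the $4$-fold cover $\tilde A(c)$ makes the global projection $\pi$ of \eqref{e:dfnprojCn-1} well defined. Since $J_0$-holomorphicity of a map into $\C^{n-1}$ is a local condition on its domain, it suffices to check it in a neighborhood of an arbitrary point of $\tilde D$.

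First I would set up the data. Give $\tilde D$ the complex structure pulled back along the covering $\rho\colon\tilde D\to D$, so that $\rho$ is holomorphic and $u\circ\rho=\pr\circ\tilde u$ is $J_c$-holomorphic, where $\pr\colon\tilde A(c)\to A(c)$ is the $4$-fold cover. Cover $A(c)$ by the pieces appearing in \eqref{e:dfnA(c)}: the negative end $U(c)\times\R_{-}$ together with the neck $\bigcup_{0\le t\le T}\Omega^{t}(U(c))$, the region $W(\epsilon^{q})$, and the positive end $U(c')\times\R_{+}$. On each of these pieces there is a preferred projection to $\C^{n-1}$ — namely $\pi^{-}$ in $(\xi^{-},\eta^{-},\zeta^{-})$-coordinates, $\pi^{+}$ in $(\xi^{+},\eta^{+},\zeta^{+})$-coordinates, and the projection forgetting $(x_1,y_1)$ in the coordinates of $W(\epsilon^{q})$ — and by Lemma \ref{l:lochol} (using in the end-regions that $J_c$ is the symplectization almost complex structure) the composition of $u$ with any one of them is $J_0$-holomorphic wherever defined. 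Moreover, the coordinate changes relating the $(\xi^{\pm},\eta^{\pm},\zeta^{\pm})$ to the handle coordinates $(x_2,y_2,z)$ and to one another — in particular \eqref{e:monodromy1}, which reads $\pi^{+}=J_0\circ\pi^{-}$ — show that any two of these preferred projections agree on overlaps up to postcomposition with a fixed power of $J_0$, i.e.\ up to a $\C$-linear automorphism of $\C^{n-1}$.

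Next I would patch. Given $\tilde z\in\tilde D$, choose a neighborhood $\tilde N$ mapped homeomorphically by $\rho$ onto a neighborhood of its image, small enough that $u$ carries this image into a single piece of the cover above. By the definition of $\pi$ as parallel transport of $\pr(\tilde p)$ back to the base point $\tilde p_0$ along a connecting path, the restriction $\pi|_{\tilde N}$ equals that piece's preferred projection composed with a fixed $\C$-linear map (a power of $J_0$ determined by the homotopy class of the connecting path). Hence $\tilde u^{\perp}|_{\tilde N}$ is this $\C$-linear map applied to $u$ composed with a preferred projection, which is $J_0$-holomorphic by Lemma \ref{l:lochol}; postcomposition with a $\C$-linear map preserves $J_0$-holomorphicity. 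As $\tilde z$ was arbitrary, $\tilde u^{\perp}$ is $J_0$-holomorphic on all of $\tilde D$.

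The main obstacle I anticipate is the bookkeeping just described: one must confirm that the parallel-transport prescription defining $\pi$ genuinely restricts, on each local piece, to a \emph{fixed} $\C$-linear multiple of that piece's coordinate projection — equivalently, that the only ambiguity coming from the choice of connecting path is the monodromy $J_0$ recorded in \eqref{e:monodromy1}. This rests on $A(c)$ deformation retracting onto the circle underlying $c$, so that $\pi_1(A(c))\cong\Z$ is generated by a loop whose action on the $\C^{n-1}$-direction is precisely $J_0$; with that in hand the remaining steps follow directly from Lemma \ref{l:lochol}.
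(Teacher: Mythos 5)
Your proof is correct and follows essentially the same route as the paper's: the paper's proof simply observes that Lemma \ref{l:lochol} gives the local $J_0$-holomorphicity of the projection composed with $u$, and remarks that the covering construction exists precisely to patch these local holomorphic maps into a global map to $\C^{n-1}$. You have merely made explicit the bookkeeping (monodromy is a power of $J_0$, postcomposition by a $\C$-linear map preserves $J_0$-holomorphicity, $\pi_1(A(c))\cong\Z$) that the paper leaves to the reader.
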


\begin{pf}
Locally, the projection to $\C^{n-1}$ composed with $u$ is $J_{0}$-holomorphic by Lemma \ref{l:lochol}. The result follows. (The role of the covering construction is merely to piece all possible local holomorphic maps together to a map into $\C^{n-1}$.)
\end{pf}

As shown in Lemma \ref{l:S(c)}, there is an obvious $J_{c}$-holomorphic $(c';c)$-chord-chord disk $S_c$ which agrees with the Reeb chord strips of $c'$ and of $c$ in the upper- and lower symplectization ends respectively and which consists of a part of the $(x_1,y_1)$-plane inside the handle. We will show next that the disk $S_{c}$ is the only $(c';c)$-chord-chord disk in $A(\delta\epsilon)$ for some $0<\delta<1$ and all sufficiently small $\epsilon$. To this end we discuss how to parameterize maps with image in a neighborhood of $S_c$.

The neighborhood is parameterized by the total space of a bundle over the space of conformal structures on the $4$-punctured disk the fiber of which is a direct sum $\sblv_{2,\delta}\oplus V_{\rm sol}$ where $\sblv_{2,\delta}$ is the weighted Sobolev space of vector fields along (a reference parametrization of) $S_{c}$) with two derivatives in $L^{2}$ and with positive exponential weights of the form $e^{\delta|\tau|}$, $0<\delta\ll 1$ in neighborhoods the ends which we think of as $[0,\infty)\times[0,1]$, and where $V_{\rm con}$ is spanned by two cut-off solutions near the punctures mapping to Reeb chords corresponding to translations in the symplectization ends. To get a parametrization we use the exponential map on vector fields in our standard coordinates on the neighborhood of $S_c$. We view the $\bar{\partial}_{J}$-operator as a section of the corresponding bundle of complex anti-linear and we say that a solution of the $\bar{\partial}_{J}$-equation is transversely cut out if the linearization of this operator is surjective at the solution. For more details on this construction we refer to \cite[Appendix B.4]{EkholmJEMS} and \cite[Section 3]{EESTransactions}.

\begin{lma}\label{l:locunique}
The holomorphic disk $S_{c}\subset \hat X_{\Lambda}(\epsilon;c)$ is uniformly transversely cut out. Consequently, there exists $0<\delta<1$ such that for all sufficiently small $\epsilon>0$, $S_c$ is the only $(c';c)$-chord-chord disk in $A(c;\delta\epsilon)\subset \hat X_{\Lambda}(c;\epsilon)$.
\end{lma}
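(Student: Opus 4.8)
The plan is to prove the two claims separately: first that the linearized Cauchy--Riemann operator $D_{S_c}$ at $S_c$ is surjective with a right inverse whose norm is bounded uniformly in $\epsilon$, and then that, for a suitable $\delta$, every $(c';c)$-chord-chord disk with image in $A(c;\delta\epsilon)$ coincides with $S_c$.

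For transversality the idea is to split $D_{S_c}$ using the product structure built into $J_c$ in Section \ref{sec:acs}. Along $S_c$ the almost complex structure preserves the splitting $\C^n=\C\times\C^{n-1}$ into the $(x_1,y_1)$-plane and the $(x_2,y_2)$-plane (the latter being $\Xi$ restricted to $S_c$), and the boundary conditions ($L=i\R^n$ giving $i\R\times i\R^{n-1}$, $C=\R^n$ giving $\R\times\R^{n-1}$) as well as the asymptotic operators at the two Reeb-chord punctures and the two corners respect it. Hence $D_{S_c}=D^{1}\oplus D^{n-1}$, where $D^{1}$ is linearized at the two-quadrant disk in the $(x_1,y_1)$-plane and $D^{n-1}$ is the $(n-1)$-fold sum of the scalar Cauchy--Riemann operator in the $\C^{n-1}$-factor linearized at the constant map $0$. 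A standard index count shows that both summands have Fredholm index zero, consistently with the expected dimension of the space of $(c';c)$-chord-chord disks being zero. The summand $D^{1}$ is the linearization of the standard local Reeb-chord model with two corners and is an isomorphism by \cite[Appendix B.4]{EkholmJEMS} and \cite[Section 3]{EESTransactions}. The kernel of $D^{n-1}$ is trivial: an element of it is a bounded holomorphic $\C^{n-1}$-valued function on the disk with boundary values in $\R^{n-1}\cup i\R^{n-1}$ that vanishes at the punctures, so the square of each coordinate has real boundary values, extends by Schwarz reflection to a bounded holomorphic function on $S^{2}$, and is therefore constant and hence zero; since $D^{n-1}$ has Fredholm index zero it is then an isomorphism as well, and so is $D_{S_c}$. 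The uniformity in $\epsilon$ I would get by rescaling the handle so that, up to errors vanishing as $\epsilon\to 0$, the domain, the operator $D_{S_c}$, and the asymptotic operators at its punctures become $\epsilon$-independent; standard elliptic estimates then give an $\epsilon$-independent bound on the right inverse.

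For uniqueness, let $u\colon D\to A(c;\delta\epsilon)$ be an arbitrary $(c';c)$-chord-chord disk. Since $D$ is simply connected, $u$ lifts to $\tilde u\colon D\to\tilde A(c)$, and $\tilde u^{\perp}=\pi\circ\tilde u\colon D\to\C^{n-1}$ is $J_0$-holomorphic by Lemma \ref{l:tildemap} and bounded. At all four punctures $u$ is asymptotic either to the transverse intersection $L\cap C$ or to the Reeb-chord strip of $c'$ or $c$, each of which projects to $0\in\C^{n-1}$, so $\tilde u^{\perp}$ extends smoothly over $\overline D$ with value $0$ at the punctures; on each boundary arc it takes values in $\R^{n-1}$ or $i\R^{n-1}$ (according to the arc and to the monodromy described around \eqref{e:monodromy1}), so squaring each coordinate and applying Schwarz reflection and Liouville's theorem as above forces $\tilde u^{\perp}\equiv 0$. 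Hence $u$ lies in the $(x_1,y_1)$-plane, and there the uniqueness of $S_c$ among holomorphic maps with boundary on the two coordinate axes, the two corners at the origin, and the prescribed asymptotics along $\gamma_{\pm\epsilon}$ is again the one-variable analysis of the local model in \cite[Appendix B.4]{EkholmJEMS} and \cite[Section 3]{EESTransactions}. Choosing $0<\delta<1$ so that all of this holds for every small enough $\epsilon$ completes the argument.

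I expect the genuine difficulty to be the uniformity in $\epsilon$ of the transversality estimate: one must pass to a rescaled, bounded-geometry model, verify that the asymptotic operators stay non-degenerate uniformly, and keep track of the several scales $\epsilon^{p}$, $\epsilon^{s+1}$, and $\epsilon^{q}$ appearing in the handle. The complex-analytic part of the uniqueness is comparatively routine once Lemma \ref{l:tildemap} is available, although the one-variable uniqueness of the $(x_1,y_1)$-plane model still requires some care with the Reeb-chord asymptotics.
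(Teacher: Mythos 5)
Your transversality argument follows the same strategy as the paper (establish invertibility of the linearized $\bar\partial_{J_c}$-operator), but where the paper only asserts invertibility on the grounds that the ``boundary condition is of a standard form'' and the Reeb chords are nondegenerate, you make this concrete by exhibiting the block decomposition $D_{S_c}=D^{1}\oplus D^{n-1}$ coming from the product structure of $J_c$ near $S_c$. That is a useful elaboration and is consistent with the paper.

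Your uniqueness argument, however, is genuinely different from the paper's. The paper proves uniqueness via the implicit function theorem: once $D_{S_c}$ is invertible, it restricts to variations of $H^{2}$-norm $\le C\epsilon^{q}$ so that the exponential map stays inside the $\epsilon^{q}$-neighborhood where $\hat V_{\pm\epsilon}$ is a product (this is the point of the Remark following the Lemma, about the blow-up of the derivative of $\bar\partial_{J_c}$ outside $W(\epsilon^{q})$). That gives only \emph{local} uniqueness near $S_c$ in the Sobolev norm; the paper's global uniqueness statement (Lemma \ref{l:unique}) then has to bring an arbitrary $(c';c)$-disk close to $S_c$ via monotonicity and a bootstrap before invoking Lemma \ref{l:locunique}. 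You instead exploit Lemma \ref{l:tildemap} directly: $\tilde u^{\perp}$ is a bounded $J_0$-holomorphic map, vanishes at the four punctures, and has alternating $\R^{n-1}$/$i\R^{n-1}$ boundary, so squaring, Schwarz reflection, and Liouville kill it outright, reducing the disk to the two-dimensional $(x_1,y_1)$-model. This is a stronger conclusion (uniqueness in all of $A(c;\delta\epsilon)$ without assuming Sobolev-closeness to $S_c$) and would in fact shortcut the final bootstrap step of Lemma \ref{l:unique}. What the paper's approach buys is that the same invertibility statement feeds both transversality and uniqueness with no extra complex analysis; what yours buys is a cleaner separation of the $\C^{n-1}$-normal directions and a uniqueness statement that does not secretly depend on how close $u$ is to $S_c$. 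Two cautions: you should spell out that the splitting of $D_{S_c}$ really is exact (this uses not just that $J_c$ preserves $\Xi$ along $S_c$, but that the boundary condition $L\cup C$, the Lagrangian intersection corners, and the asymptotic operators at $c$ and $c'$ in the chosen coordinates are all block-diagonal; the latter uses the flat coordinates of \eqref{e:handlecoord} and the reflected coordinates of Section \ref{ss:attachingmap}), and the one-variable uniqueness in the $(x_1,y_1)$-model — while standard — still carries the $\epsilon$-dependence of the asymptotic operators at $c$ and $c'$, so it needs the same rescaling uniformity you flag for the Fredholm estimate.
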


\begin{pf}
Consider the linearized $\bar\pa_{J_c}$-operator at $S_{c}$ acting on the space of vector fields with two derivatives in $L^{2}$ weighted by small positive exponential weights at the ends and augmented by cut-off vector fields corresponding to translations. This linearized operator is invertible (since its boundary condition is of a standard form and the linearized Reeb flow at the Reeb chords takes tangent spaces of the corresponding Legendrian submanifolds to transverse subspaces). Restricting to tangent vectors of $2$-norm $\le C\epsilon^{q}$ where $C$ is chosen so that the exponential map maps these variations inside the $\epsilon^{q}$-neighborhood where $\hat V_{\pm\epsilon}$ is a product we find that $0<\delta<1$ exists as claimed.
\end{pf}

\begin{rmk}
The reason for restricting to tangent vectors mapping into an $\epsilon^{q}$-neighborhood of $S_c$ is that if longer tangent vectors are included then we must take into account the change in the Reeb flow outside $W(\epsilon^{q})$. This change leads to a blow up in the variation of the derivative of the $\bar\pa_{J_{c}}$-operator and the implicit function theorem cannot be used to guarantee uniqueness as $\epsilon\to 0$.
\end{rmk}

Our next lemma shows that $S_c$ is unique over all.

\begin{lma}\label{l:unique}
The $J_c$-holomorphic disk $S_c$ is the only $(c;c')$-chord-chord in $\hat X_{\Lambda}(c;\epsilon)$.
\end{lma}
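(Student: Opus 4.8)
The plan is to prove uniqueness in three moves: first bound the $\omega$-energy of an arbitrary $(c';c)$-chord-chord disk $u$; then use that bound to confine $u$ to the region $A(c)$ of \eqref{e:dfnA(c)}; and finally invoke the projection of Lemma \ref{l:tildemap} to pin $u$ down to the thin $2$-dimensional surface carrying $S_c$, where Lemma \ref{l:locunique} applies.

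I would first note that $L$ and $C$ are exact and that their primitives of the Liouville form can be normalised to vanish at the single point $L\cap C$, so Stokes' theorem yields $\int_D u^{\ast}\omega=\act(c')-\act(c)$, the two corners at $L\cap C$ contributing nothing. Using the description of $c'$ from Lemma \ref{l:newchords} — its Reeb trajectory differs from that of $c$ only by short arcs through the thin handle and by the global rescaling $e^{T}=1+\Ordo(\epsilon^{2p-2s-2})$ of the contact form — this difference is $\Ordo(\epsilon^{\mu})$ for some $\mu>0$, hence tends to $0$ with $\epsilon$. A monotonicity argument for punctured $J_c$-holomorphic curves with Lagrangian boundary should then confine $u(D)$ to $A(c)$ for small $\epsilon$: the boundary of $u$ lies on $L\cup C$ and its four punctures are asymptotic to $c'$, to $L\cap C$ twice, and to $c$, all of which together with a definite neighbourhood lie in $A(c)$, so any excursion of $u(D)$ out of $A(c)$ would cost a fixed amount of area and contradict the bound. (Alternatively, stretching the handle as $\epsilon\to 0$ and using SFT-compactness, the vanishing action excess forces the only limiting building to be the trivial strip over $c$, so every $(c';c)$-disk is $C^{0}$-close to $S_c$ for small $\epsilon$.)

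Granting $u(D)\subset A(c)$, Lemma \ref{l:tildemap} tells me the lift $\tilde u^{\perp}=\pi\circ\tilde u\colon\tilde D\to\C^{n-1}$ to the $4$-fold cover $\tilde D$ of $D$ is $J_0$-holomorphic. On each boundary arc of $\tilde D$ its values lie in $\R^{n-1}\cup i\R^{n-1}$, the $\pi$-images of the boundary conditions $C$ and $L$ (which alternate between these two subspaces as one goes around), and at every puncture — over $c'$, over the two corners at $L\cap C$, and over $c$ — the map decays to $0$, since in the coordinates \eqref{e:handlecoord} the chords sit at the origin of $\C^{n-1}$ and $L\cap C$ is the origin of $\C^{n}$. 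Writing $\tilde u^{\perp}=(w_1,\dots,w_{n-1})$, each $w_k^{2}$ then has real boundary values, so by Schwarz reflection it extends holomorphically across $\pa\tilde D$; the punctures are removable because $w_k\to 0$, and a holomorphic function on the resulting closed Riemann surface that vanishes at the filled punctures is identically $0$. Hence $\tilde u^{\perp}\equiv 0$, so $u$ maps into the $2$-real-dimensional $J_c$-complex surface $Z\subset A(c)$ on which the $\C^{n-1}$-coordinate vanishes — the locus swept by the $(x_1,y_1)$-plane in $W(\epsilon^{q})$ and by the Reeb-symplectization $2$-planes near $c$ and $c'$. As $Z$ lies in the flat locus $\{x_2=y_2=0\}$ it sits in an $\Ordo(\epsilon^{s+1})$-neighbourhood of $S_c$, hence in $A(c;\delta\epsilon)$, and Lemma \ref{l:locunique} gives $u=S_c$ (equivalently, $u$ and $S_c$ are holomorphic maps into the Riemann surface $Z$ with identical boundary arcs, corners and asymptotics, so they differ only by a conformal reparametrisation of the domain).

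I expect the main obstacle to be the confinement step: converting the very small $\omega$-area of $u$ into genuine $C^{0}$-control inside a tube whose width $\Ordo(\epsilon^{q})$ itself shrinks with $\epsilon$. Making this rigorous will require either a quantitative monotonicity estimate with the constants tracked against $p,q,s$, so that the area excess beats the square of the tube width, or the neck-stretching compactness argument as $\epsilon\to 0$. Everything downstream — the projection, the reflection, and the appeal to Lemma \ref{l:locunique} — is soft.
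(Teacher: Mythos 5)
Your proposal follows the same first step as the paper but replaces its second step with a genuinely different and arguably cleaner argument. For the confinement to $A(c)$ both approaches use the same idea: the total $\omega$-energy is of order $\epsilon^{2p}$, whereas an excursion out of the $\epsilon^{q}$-tube would, by monotonicity, cost area at least $c_0\epsilon^{2q}$, and since $q<p$ this is impossible for small $\epsilon$. You correctly flag this as the step needing quantitative control; the paper carries it out explicitly (a compactness/contradiction argument with the ball of radius $\tfrac14\epsilon^{q}$ in $\C^{n-1}$, and the integral formula $\int_{\epsilon^{s+1}}^{\epsilon^{s}}\bigl(\sqrt{\epsilon^{2p}+2x_1^{2}}-\sqrt{2}x_1\bigr)dx_1=\Ordo(\epsilon^{2p})$ for the area budget). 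So your ``main obstacle'' is exactly what the paper addresses head-on, and it is not a small point: the relevant comparison is $\epsilon^{2q}$ versus $\epsilon^{2p}$, not an $\epsilon$-independent excursion cost versus $\epsilon^{\mu}$.

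Where you genuinely diverge is in the uniqueness-given-confinement step. The paper observes only that the boundary integral of a $\C^{n-1}$-primitive is $\Ordo(\epsilon^{2p})$, applies monotonicity to conclude the $\C^{n-1}$-component is $\Ordo(\epsilon^{p})$, bootstraps to functional-analytic closeness, and then invokes Lemma \ref{l:locunique}. You instead square the components of $\tilde u^{\perp}$ to obtain real boundary values, Schwarz-reflect, remove the (corner) singularities, and apply Liouville to get $\tilde u^{\perp}\equiv 0$ exactly, after which $u$ maps into the two-real-dimensional $J_c$-complex surface $Z=\{x_2=y_2=0\}\cap A(c)$ and is determined up to reparametrisation of the domain. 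This is a stronger and simpler conclusion than the paper's quantitative closeness, and it bypasses the bootstrap entirely. The delicate points you would need to verify — that $\tilde D$ is simply a disjoint union of copies of the punctured disk (true: $D$ minus boundary punctures is simply connected, so the pulled-back $4$-fold cover is trivial), that the corner asymptotics of $w_k$ at the $L\cap C$ punctures give half-integer Puiseux expansions so $w_k^2$ is honestly holomorphic there, and that the doubled surface is compact after filling punctures — all go through. So: different route for the second half, same route but under-quantified for the first half; the first half is where you should plug in the paper's explicit $\epsilon^{2p}$ area estimate and $\epsilon^{2q}$ monotonicity bound.
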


\begin{pf}
We assume that there exists a sequence $\{\epsilon_j\}_{j=1}^{\infty}$ such that $\epsilon_j\to 0$ as $j\to\infty$ and such that for each $j$ there is a  $(c';c)$-chord-chord disk $S^{j}_{c}$ in $\hat X_{\Lambda}(c;\epsilon)$ which does not lie in $A(c,\epsilon_j)$.

We will use the symplectic forms $e^{\sigma_j t}\lambda$ on the ends of $\hat X_{\Lambda}(c;\epsilon_j)$, where $\sigma_j>0$ is a small constant. More precisely, there exists $t_0>0$ such that $S_{c}^{j}$ lies inside a $\delta$-neighborhood of $c'\times\R$ and $c\times \R$ where $\delta\ll\epsilon^{p}$ in the region $t>|t_0|$. Take $\sigma>0$ such that $\sigma t_0\ll \epsilon^{2p+1}$.

Let $v_j\colon D\to \hat X(c;\epsilon_{j})$ be the map corresponding to $S^{j}_{c}$. Let
\[
\hat X_{\Lambda;t_0}(c;\epsilon_j)=
\hat X_{\Lambda}(c;\epsilon_j)-\bigl(Y\times(-\infty,-t_0)\cup Y_\Lambda\times(t_0,\infty)\bigr).
\]
Let
\[
E=v^{-1}_j(A(c;\epsilon)\cap\hat X_{\Lambda;t_0}(c;\epsilon))
\]
and consider the restriction
\[
v_j\colon E\to A(c;\epsilon)\cap \hat X_{\Lambda;t_0}(c;\epsilon).
\]
The boundary of $E$ decomposes as
\[
\pa E=\pa_{\pm} E\,\,\cup\,\,\pa_{L\cup C} E\,\,\cup\,\, \pa_0 E,
\]
where $v_{j}(\pa_+ E)$ ($v_{j}(\pa_- E)$) is a curve near the Reeb chord $c'$ ($c$) in the symplectization level at $t=t_0$ (at $t=-t_0$), where $\pa_{L\cup C} E\subset \pa D$ and $v_{j}(\pa_{L\cup C}E)\subset L\cup C$, and where $\pa_0 E=v_j^{-1}\bigl(\pa A(c;\epsilon)\bigr)$.

With notation as in Lemma \ref{l:tildemap}, consider the maps $\tilde v_j\colon \tilde E\to\tilde A(c;\epsilon)$ and the corresponding map $\tilde v^{\perp}_j\colon \tilde E\to\C^{n-1}$. Write
\[
\pa\tilde E=\pa_{\pm}\tilde E\,\,\cup\,\, \pa_{L\cup C}\tilde E\,\,\cup\,\,\pa_{0}\tilde E
\]
where $\pa_{\pm}\tilde E$ is the preimage of $\pa_{\pm} E$ etc, under the $4$-fold covering map.

Let $B^{n-1}(r)$ denote the ball of radius $r$ in $\C^{n-1}$ centered at the origin. Note that $\tilde v^{\perp}(\pa_{\pm}\tilde E)\subset B^{n-1}(\delta)\subset B^{n-1}(\epsilon^{2p}_{j})$ and that $\tilde v^{\perp}(\pa_{L\cup C}\tilde E)\subset \R^{n-1}\cup i\R^{n-1}$. Consequently, if $\pa_{0}\tilde E\ne\emptyset$ we can find a ball $\tilde B\subset B^{n-1}(\epsilon_{j}^{q})$ of radius $\tfrac14\epsilon^{q}_{j}$ such that $\tilde v^{\perp}_j(\tilde E)$ passes through the center of $\tilde B$ and the boundary of $\tilde v^{\perp}_j(\tilde E)$ lies in $\pa B^{n-1}(\epsilon^{q}_{j})\cup \bigl(\R^{n}\cup i\R^{n}\bigr)\cap B^{n-1}(\epsilon^{q}_{j})$. It then follows by monotonicity that
\[
\area(\tilde v)\ge\area(\tilde v^{\perp})\ge c_0\epsilon^{2q},
\]
for some constant $c_0$.
In particular, with $\beta$ denoting a primitive of the symplectic form $\omega$ on $\hat X_{\Lambda;t_0}(c;\epsilon)$ we find that
\begin{equation}\label{e:wrongarea}
\int_E v^{\ast} (d\beta)\ge \tfrac14 c_0\epsilon^{2q}.
\end{equation}

On the other hand the area of $S_{c}^{j}\cap \hat X_{\Lambda;t_0}(c;\epsilon)$ is up to terms of size $\Ordo(\epsilon^{2p+1})$ estimated by
\[
\int_{\epsilon^{s+1}}^{\epsilon^{s}}
\left(\sqrt{\epsilon^{2p}+2x_1^{2}}-\sqrt{2}x_1\right)dx_1=\Ordo(\epsilon^{2p}),
\]
since the boundary $\pa_{\pm} E$ lies at distance $\Ordo(\delta)=\Ordo(\epsilon^{2p+1})$ from the Reeb chords. Thus,
\[
\int_E v^{\ast}(d\beta)=\Ordo(\epsilon^{2p}).
\]
This contradicts \eqref{e:wrongarea} and we find  that $\pa_{0}E=\emptyset$ and hence that $S^{j}_{c}(\epsilon_{j})$ lies entirely inside $A(c;\epsilon_j)$ for all $j$ large enough.

Thus for $\epsilon>0$ sufficiently small any $(c';c)$-chord-chord disk $S_{c}'$ in $\hat X_{\Lambda}(c;\epsilon)$ lies in $A(c;\epsilon)$. We show that $S'_{c}=S_{c}$ using an argument similar to the above: the integral of a primitive of the symplectic form in $\C^{n-1}$ over the boundary of $S_{c}$ equals $0$. Hence the corresponding integral equals $\Ordo(\delta)=\Ordo(\epsilon^{2p})$ for $S'_{c}$. Monotonicity then shows that the $\C^{n-1}$-component of $S'_{c}$ is of size $\Ordo(\epsilon^{p})$ and a standard bootstrap argument implies that $S'_{c}$ is at distance $\Ordo(\epsilon^{p})$ from $S_{c}$ with respect to the functional analytic norm used in Lemma \ref{l:locunique}. Lemma \ref{l:locunique} then implies $S_{c}'=S_{c}$.
\end{pf}

\subsection{Counting disks}
The purpose of this section is to prove the main result for holomorphic disks interpolating between Reeb chords of $\Gamma$ and Reeb orbits in $Y_{\Lambda}(\epsilon)$ and corresponding words of chords of $\Lambda$ and corresponding cyclic words, respectively. We begin by stating this main result. 

Fix $\act_0$ not in the action set of $\Lambda\subset Y$ and fix $\epsilon_0>0$ so that Lemmas \ref{l:neworbits} and \ref{l:newchords} hold for Reeb orbits in $Y_{\Lambda}(\epsilon)$ respectively Reeb chords of $\Gamma\subset Y_{\Lambda}(\epsilon)$ of action $<\act_0$ for $0<\epsilon<\epsilon_0$. Let $c_1\dots c_m$ be a word of Reeb chords of $\Lambda$ with $\act(c_1\dots c_m)<\act_0$. Let $\alpha$ denote the Reeb orbit in $Y_\Lambda(\epsilon)$ corresponding to the cyclic word $(c_1\dots c_m)^{\circ}$ with a marker on the underlying geometric orbit, let $a$ denote the Reeb chord of $\Gamma\subset Y_{\Lambda}(\epsilon)$ corresponding to the word $c_1\dots c_m$, and let $a_1=c_1\dots c_j$ and $a_2=c_{j+1}\dots c_m$ be the Reeb chords of $\Gamma$ corresponding to consecutive sub-words of $c_1\dots c_m$

\begin{thm}\label{l:glu}
For all sufficiently small $\epsilon>0$ the following holds.
\begin{itemize}
\item[{\rm (a)}]
The algebraic number of $(a;c_1\dots c_m)$-chord-chord disks in $X_{\Lambda}(\epsilon)$ equals $\pm 1$.
\item[{\rm (b)}]
The algebraic number of $(\alpha;c_1\dots c_m)$-orbit-chord disks in $X_{\Lambda}(\epsilon)$ equals $\pm 1$.
\item[{\rm (c)}]
The algebraic number of $(a_1,a_2;a)$-chord disks in $Y_\Lambda(\epsilon)\times\R$ equals $\pm 1$.
\item[{\rm (d)}]
The algebraic number of $(a,\alpha)$-chord-orbit disks in $Y_{\Lambda}(\epsilon)\times\R$ equals $\pm 1$.
\end{itemize}
\end{thm}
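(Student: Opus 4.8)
The plan is to establish all four statements simultaneously by induction on the length $m$ of the word $c_1\dots c_m$ --- equivalently, by induction on action --- using the basic strips $S_c$ and $S_a$ of Lemmas~\ref{l:S(c)} and \ref{l:S(a)} as building blocks and Floer gluing to assemble the general disk. The guiding picture is that, in the small-$\epsilon$ limit, each of the disks in question degenerates, in the sense of SFT compactness, into a holomorphic building whose pieces are a basic strip $S_{c_i}$ near each handle, glued along a tree by the ``composition'' disks of parts (c) and (d) (and, in the cyclic case of (b), also the chord-orbit disk of (d)). Conversely, Floer gluing applied to such a building produces a unique nearby disk, transversely cut out, whose orientation sign is the product of the signs of the pieces. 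So once the basic pieces are known to be unique and transverse and the $\epsilon\to0$ degeneration is fully controlled, every count is forced to be a product of $\pm1$'s, hence $\pm1$.

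\emph{Base cases.} For $m=1$, statement (a) is exactly Lemma~\ref{l:unique} together with the transversality of Lemma~\ref{l:locunique}: $S_c$ is the unique $(c';c)$-chord-chord disk and it is uniformly transversely cut out, so it contributes $\pm1$; the mixed analogue follows from Lemma~\ref{l:S(a)} by the identical argument. For the composition disks of (c) and (d), and for (b) with small $m$, I would first exhibit explicit model disks: in a cobordism adapted simultaneously to the chords and push-offs in play (a routine variant of Sections~\ref{sec:identifications}--\ref{sec:acs}, using the parallel copies $C_0,C_1,C_2$ and their push-off Legendrians $\Gamma_0,\Gamma_1,\Gamma_2$), equipped with the corresponding product almost complex structure, these disks appear as obvious holomorphic disks consisting of pieces of the $(x_1,y_1)$-plane inside the handle together with trivial Reeb strips/cylinders in the ends --- exactly as in Lemma~\ref{l:S(c)} --- and they are uniformly transversely cut out by the argument of Lemma~\ref{l:locunique}, the linearized operator being invertible because the linearized Reeb flow sends the relevant Legendrian tangent spaces to transverse ones. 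In (b) and (d) the positive boundary puncture induces the asymptotic marker on the geometric orbit underlying $\overline{w}^{\circ}$ (see Section~\ref{s:unique} and the disk definitions above); the model disk respects this marker, which is precisely what removes the orbit's symmetry and makes the moduli problem transverse.

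\emph{Inductive step and uniqueness.} Assume all four statements for words of length $<m$. Given $c_1\dots c_m$ and a splitting into nonempty subwords, the inductively known disks for the subwords and the appropriate composition disk of (c)/(d) assemble into a holomorphic building asymptotic to the required chord or orbit on one end and to $c_1\dots c_m$ on the other; all pieces being transversely cut out, Floer gluing (in the form used in Section~\ref{sec:anchor}) produces, for all small $\epsilon$, a unique nearby disk of the required type, transversely cut out, with sign the product of the pieces' signs. For uniqueness --- that for small $\epsilon$ there are no other disks --- I would argue as in Lemma~\ref{l:unique}: a monotonicity/area estimate, comparing the area of the $\C^{n-1}$-component of any putative disk against the tiny area $\Ordo(\epsilon^{2p})$ carried in the handle by the model pieces (using holomorphicity of the $\C^{n-1}$-projection, Lemma~\ref{l:tildemap}), forces the image of any disk of the given type and action $<\act_0$ to lie in the adapted region where the almost complex structure is standard; then SFT compactness as $\epsilon\to0$, together with the Reeb-dynamics description of Lemmas~\ref{l:neworbits} and \ref{l:newchords}, shows every limit building has the combinatorial type above, so by induction its pieces are the known disks, and the local uniqueness in the gluing theorem (no gluing length escapes, again by the area bound) identifies the original disk with the glued one. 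Finally, to pass between the various chord-adapted almost complex structures used for the different $c_i$ one interpolates along a generic path and runs the usual cobordism argument for the $1$-dimensional parametrized moduli space, whose boundary consists only of the buildings already enumerated; orientations are handled by index/determinant bundles as in the proof of Theorem~\ref{t:anchor}, so all signs multiply to $\pm1$.

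The main obstacle is precisely this uniqueness/compactness step: ruling out, for small but fixed $\epsilon$, all unexpected degenerations --- extra (possibly constant) components in the shrinking handle, disk bubbles off $L$ or $C$, and chord/orbit splittings not coming from the combinatorics of words --- and doing so with estimates uniform enough to survive the $\epsilon\to0$ limit. This is where the small-action hypothesis, the explicit local models of Sections~\ref{S:localmodel}--\ref{s:buildcob'}, the holomorphicity of the $\C^{n-1}$-projection, and the monotonicity arguments of the type in Lemma~\ref{l:unique} are indispensable.
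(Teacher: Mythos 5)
Your proposal diverges from the paper's proof at a crucial point and, as written, contains a gap. You want to run the induction by treating the composition disks of parts (c) and (d) --- the $(a_1,a_2;a)$-chord disks and the $(a,\alpha)$-chord-orbit disks --- as known building blocks, supplied in advance by ``explicit model disks'' in a suitably adapted local model. But no such explicit disks are produced by the constructions of Sections~\ref{sec:identifications}--\ref{sec:acs}: Lemmas~\ref{l:S(c)} and \ref{l:S(a)} only give model disks with a \emph{single} positive puncture running once through a single handle. A genuine $(a_1,a_2;a)$-chord disk has two positive boundary punctures at chords of $\Gamma$ whose underlying words can be arbitrarily long, and it lives in the symplectization $\R\times Y_\Lambda(\epsilon)$, not in the handle-adapted region where the $(x_1,y_1)$-plane picture applies. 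Nothing in the local-model analysis makes such a disk ``obvious,'' and trying to construct and then count it directly is both harder than and logically prior to exactly the claim being proved: the proposal implicitly assumes (c) and (d) in order to glue the inductive step, which is circular.

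The paper's proof sidesteps this entirely, and the device it uses is absent from your sketch. After the base case (Lemma~\ref{l:onesimple}, the $m=1$ instance of (a), established by a deformation of the cobordism and an action/Stokes argument ruling out breaking), the inductive step is run by examining a $1$-\emph{dimensional} moduli space of disks with two positive punctures (at $a_1$ and $a_2$, on parallel copies $\Gamma_2\to\Gamma_1$ and $\Gamma_1\to\Gamma_0$) and negative punctures at $c_1,\dots,c_{m+1}$. The action bound $\epsilon\ll\act(a_1)+\act(a_2)-\sum\act(c_j)$ forces its boundary to consist of exactly two kinds of configurations: a two-level building with an $(a_1,a_2;a)$-chord disk on top of an $(a;c_1\dots c_{m+1})$-chord-chord disk, and a nodal configuration of an $(a_1;c_1\dots c_l)$- and an $(a_2;c_{l+1}\dots c_{m+1})$-chord-chord disk joined at $C\cap L$. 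Since the signed count of boundary points of a compact oriented $1$-manifold vanishes, one obtains the integer identity
\[
n(a_1,a_2;a)\,n(a;c_1\dots c_{m+1})=n(a_1;c_1\dots c_l)\,n(a_2;c_{l+1}\dots c_{m+1}),
\]
whose right-hand side is $\pm1$ by induction; hence both unknown factors on the left are $\pm1$ simultaneously. The counts in (b) and (d) are extracted the same way, from a $1$-dimensional moduli space of cylinders with one positive boundary puncture at $a$ and negative boundary punctures at $c_1,\dots,c_m$, whose boundary pairs the $(a;\alpha)$-chord-orbit/$(\alpha;c_1\dots c_m)$-orbit-chord configuration against the already-controlled broken disks. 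This ``solve for the unknown counts from a boundary relation'' step is the heart of the proof, and it is precisely the ingredient your proposal is missing; without it you would need independent constructions and uniqueness statements for (c) and (d) disks, which the paper never provides and which are not available from the local models.
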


We prove Theorem \ref{l:glu} in a sequence of lemmas below.

\begin{lma}\label{l:onesimple}
Let $c$ be any Reeb chord of $\Lambda\subset Y$ with $\act(c)<\act_0$ and let $b$ be any Reeb chord of $\Lambda\subset Y$ with $\act(b)\le \act(c)$ then for all sufficiently small $\epsilon>0$ the algebraic number of $(b';b)$-chord disks in $X_{\Lambda}(c;\epsilon)$ equals $\pm 1$.
\end{lma}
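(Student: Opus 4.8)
The plan is to deduce Lemma \ref{l:onesimple} from the uniqueness statement Lemma \ref{l:unique} by deforming the cobordism and its almost complex structure. When $b=c$ there is nothing new: Lemma \ref{l:unique} says $S_c$ is the only $(c';c)$-chord-chord disk in $\hat X_{\Lambda}(c;\epsilon)$ and Lemma \ref{l:locunique} says it is transversely cut out, so the algebraic count is $\pm 1$. For a general Reeb chord $b$ with $\act(b)\le\act(c)<\act_0$ I would connect the pair $(\hat X_{\Lambda}(c;\epsilon),J_c)$ to the analogous pair $(\hat X_{\Lambda}(b;\epsilon),J_b)$ adapted to $b$ instead of $c$, through a smooth one-parameter family $(\hat X_{\Lambda}(t),J_t)$, $t\in[0,1]$, of cobordisms of exactly the type constructed in Sections \ref{sec:identifications}--\ref{sec:acs}: one interpolates the identification maps $\hat\phi_c\rightsquigarrow\hat\phi_b$ of $\Lambda$ with the core sphere, the auxiliary Legendrian isotopy near the upper endpoint of the distinguished chord, the flattening region, and the compatible almost complex structure. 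All of these modifications are supported near the handle and near the endpoints of $b$ and $c$, so by Remark \ref{r:paramindep} (and the subsequent lemma) the chords $b$, $b'$ and the correspondences of Lemmas \ref{l:neworbits} and \ref{l:newchords} persist throughout the family, and it makes sense to speak of $(b';b)$-chord-chord disks in every $\hat X_{\Lambda}(t)$.

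Next I would study the parametrized moduli space $\mathcal{M}=\bigcup_{t\in[0,1]}\mathcal{M}_t(b';b)$. For a generic path its interior is a transversely cut out $1$-manifold: these disks carry a single positive puncture and hence are not multiply covered, so transversality away from the endpoints comes from a generic choice of path of admissible almost complex structures, while near $t=0$ and $t=1$ one uses instead the uniform transversality of Lemma \ref{l:locunique} (applied to $c$ and to $b$). Granting compactness of $\mathcal{M}$ (the key point, discussed below), $\mathcal{M}$ is then a compact oriented $1$-manifold with $\pa\mathcal{M}=\mathcal{M}_0(b';b)\sqcup\mathcal{M}_1(b';b)$, so the signed count of $\mathcal{M}_0(b';b)$ — which is precisely the number of $(b';b)$-chord-chord disks in $\hat X_{\Lambda}(c;\epsilon)$ — equals that of $\mathcal{M}_1(b';b)$, and the latter is $\pm 1$ since $\mathcal{M}_1(b';b)=\{S_b\}$ is a single transversely cut out point by Lemmas \ref{l:unique} and \ref{l:locunique} applied with $b$ in place of $c$.

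For compactness I would repeat the area argument from the proof of Lemma \ref{l:unique}. A $(b';b)$-chord-chord disk has Hofer area $\act(b')-\act(b)$ up to the contributions of the two corners at $L\cap C$, which vanish for the natural primitive; since $\act(b')$ and $\act(b)$ differ only by handle corrections of order $\Ordo(\epsilon^{2p-2s-2})$, this area is bounded by a positive power of $\epsilon$, hence below any fixed positive constant once $\epsilon$ is small. In an SFT limit along the family every level that is not a trivial strip has positive area; but by Lemmas \ref{l:neworbits} and \ref{l:newchords} the actions of Reeb chords of $\Gamma$, of Reeb chords of $\Lambda$, and of Reeb orbits of $Y_{\Lambda}(\epsilon)$ are, up to $\Ordo(\epsilon^{2p-2s-2})$ corrections, sums of actions of chords and orbits of $\Lambda$ and of $Y$, so any two distinct such objects have actions differing by at least (roughly) the smallest action of a Reeb chord of $\Lambda$, a constant independent of $\epsilon$. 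Hence a nontrivial breaking, an interior node asymptotic to a Reeb orbit, or a bubbled component (the latter excluded in any case by exactness of $X_{\Lambda}(\epsilon)$ and of $L$ and $C$) would force some component to have non-positive area, which is impossible; and the monotonicity estimate of Lemma \ref{l:unique} confines any $(b';b)$-chord-chord disk of such small area to the region $A(\,\cdot\,;\epsilon)$, where it is rigidly pinned down. Therefore no degeneration occurs and $\mathcal{M}$ is compact.

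The main obstacle is exactly this compactness step: one must enumerate all possible degenerations of $(b';b)$-chord-chord disks in a one-parameter family — breakings in the symplectizations of $Y$ and of $Y_{\Lambda}(\epsilon)$, the formation of interior punctures at Reeb orbits, and bubbling of closed or one-sided components — and rule each one out, verifying that the tiny energy $\Ordo(\epsilon^{2p-2s-2})$ is genuinely smaller than every relevant gap between actions of geometric Reeb chords and orbits, uniformly as $\epsilon\to 0$. Once that bookkeeping is set up (and it mirrors the estimates already made in the proof of Lemma \ref{l:unique}), the statement follows at once from deformation invariance of the signed count.
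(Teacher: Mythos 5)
Your approach matches the paper's: both proofs deform through a one-parameter family of cobordisms from the model adapted to $b$ (where Lemma~\ref{l:unique} gives a unique, uniformly transversely cut out disk $S_b$) to the model adapted to $c$, and both rule out SFT breaking in the parametrized moduli space by comparing the $\Ordo(\epsilon^{2p})$-sized energy of a $(b';b)$-chord-chord disk with the $\epsilon$-independent gap $\Delta\act$ between actions of distinct Reeb chords, cyclic words, and orbits, so that deformation invariance transfers the count.

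One small misstep: you invoke ``the uniform transversality of Lemma~\ref{l:locunique} (applied to $c$ and to $b$)'' at \emph{both} endpoints, but Lemma~\ref{l:locunique} only certifies transversality of the specific adapted disk $S_c$ (respectively $S_b$) in the specific adapted cobordism $\hat X_\Lambda(c;\epsilon)$ (respectively $\hat X_\Lambda(b;\epsilon)$). It says nothing about $(b';b)$-disks in the cobordism adapted to $c$ — there the images land away from the distinguished flattened region $W(\epsilon^q)$ and $J_c$ has no special structure near them. At that endpoint you must instead choose a generic almost complex structure (perturbing near the positive puncture suffices, since the disks have a unique positive boundary puncture and hence are somewhere injective), which is exactly what the paper does by taking $J^1=J$ generic rather than the adapted $J_c$. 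With that one-line change your argument closes.
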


\begin{pf}
Consider a $1$-parameter family of cobordisms $X^{t}_{\Lambda}(\epsilon)$, $0\le t\le 1$ obtained by deforming the handle and the identification map taking $\Lambda$ to the core sphere and which has the following properties:
\begin{itemize}
\item[(a)] $X^{0}_{\Lambda}(\epsilon)=\hat X_{\Lambda}(b;\epsilon)$,
\item[(b)] $X^{1}_{\Lambda}(\epsilon)=X_{\Lambda}(c;\epsilon)$, and
\item[(c)] $\epsilon\ll\Delta\act$ where $\Delta\act$ is the minimal difference $\act(\alpha)-\act(\beta)$, where $\alpha$ and $\beta$ are Reeb orbits in $Y$ or cyclic words or Reeb chords of $\Lambda\subset Y$ and where $\alpha\ne\beta$.
\end{itemize}
Also fix a $1$-parameter family of almost complex structures $J^t$ on $X^{t}_{\Lambda}(\epsilon)$ such that $J^{0}=J_{b}$ and such that if $J=J^{1}$ then the moduli space of $J$-holomorphic $(b';b)$-chord-chord disks in $X(c;\epsilon)$ is a transversely cut out  compact $0$-manifold. (Transversality is straightforward, we can perturb $J$ near the positive puncture.)  Let $\M^{t}$ denote the moduli space of $J^{t}$-holomorphic $(b';b)$-chord-chord disks in $X^{t}_{\Lambda}(\epsilon)$ and consider the parameterized moduli space
\[
\M^{[0,1]}=\bigcup_{0\le t\le 1}\M^{t}.
\]
After small perturbation fixed near $t=0,1$, $\M^{[0,1]}$ becomes a compact oriented $1$-manifold with a natural compactification consisting of several level disks and of disks on the boundary. In fact we claim that there are no broken disks so that $\M^{[0,1]}$ is a compact $1$-manifold with boundary
\begin{equation}\label{e:boundary}
\pa\M^{[0,1]}=\M^{1}\cup -\M^{0}.
\end{equation}
To see this, assume that there is some several level disk in the compactification of $\M^{[0,1]}$. By exactness of the Lagrangian boundary condition the only splitting which can occur corresponds to splitting at a Reeb chord at positive or negative infinity or at a Reeb orbit at negative infinity. The existence of such a splitting however contradicts $\act(c')-\act(c)=\Ordo(\epsilon^{2p})$ by Stokes' theorem and (c) above.

The lemma now follows from \eqref{e:boundary}, (a), the properties of $J^{0}$ and $J^{1}$, and Lemma \ref{l:unique}.
\end{pf}

\begin{pf}[Proof of Lemma \ref{l:glu}]
We use notation as in the formulation. Assume inductively that the number $n(a;c_1\dots c_m)$ of $(a;c_1\dots c_m)$-chord-chord disks equals $\pm 1$ for all Reeb chord words of length $m$. Lemma \ref{l:onesimple} implies that this holds for $m=1$.

Consider the moduli space of disks with two positive punctures at Reeb chords $a_1$ connecting $\Gamma_{2}$ to $\Gamma_{1}$ and $a_2$ connecting $\Gamma_{1}$ to $\Gamma_{0}$, corresponding to Reeb chords of  $\Gamma\subset Y_{\Lambda}(\epsilon)$ where $a_1$ corresponds to the Reeb chord word $c_1\dots c_l$ and $a_2$ to the word $c_{l+1}\dots c_{m+1}$, with negative punctures at Reeb chords $c_1,\dots,c_{m+1}$ of $\Lambda\subset Y$, and with two Lagrangian intersection punctures. This moduli space is $1$-dimensional. Transversality by perturbing the almost complex structure is again straightforward, the disk have injective points near the positive punctures since we use slightly distinct Lagrangians. Then, since $\epsilon\ll\act(a_1)+\act(a_2)-\sum_{j=1}^{m+1} \act(c_j)$ it follows as in the proof of Lemma \ref{l:onesimple} that the boundary of this moduli space consists of the following configurations:
\begin{itemize}
\item two level disks with top-level a $(a_1,a_2;a)$-chord disk and bottom level an $(a;c_1\dots c_{m+1})$-chord-chord disk, and
\item two component broken disks with components an $(a_1;c_1\dots c_{l})$-chord-chord disk and an $(a_2;c_{l+1}\dots c_{m+1})$-chord-chord disk joined at $C\cap L$.
\end{itemize}
Let $n(a_1,a_2;a)$ denote the numbers of $(a_1,a_2;a)$-chord disks. Then counting boundary components of the $1$-dimensional moduli space we find
\[
n(a_1,a_2;a)n(a;c_1\dots c_{m+1})=n(a_1;c_1\dots c_{l})n(a_2;c_{l+1}\dots c_{m+1}).
\]
However, by our inductive assumption $n(a_1;c_1\dots c_l)n(a_2;c_{l+1}\dots c_{m+1})=\pm 1$ and we conclude $n(a_1,a_2;a)=\pm 1$ and $n(a;c_1\dots c_{m+1})=\pm 1$. This proves (a) and (c).

For (b) and (d) we consider instead the $1$-dimensional moduli space of punctured cylinders with a positive puncture on one boundary component mapping to $C$ where the map is asymptotic to $a$ corresponding to the word $c_1\dots c_m$ and negative punctures at the other boundary components mapping to $L$ where the map is asymptotic to $c_1,\dots,c_{m}$. Representing the cylinder as $[-R,R]\times S^{1}$ we require that the positive puncture corresponds to $R\times 1$. The action argument above shows that the boundary of this $1$-dimensional moduli space consists of the following configurations:
\begin{itemize}
\item an $(a_1;c_1\dots c_l)$-chord-chord disk joined with a $(a_2;c_{l+1}\dots c_m)$-chord-chord disk at $L\cap C$ and
\item an $(a;\alpha)$-chord-orbit disk joined to an $(\alpha;c_1\dots c_m)$-orbit-chord disk. Here the boundary puncture of the $(a;\alpha)$-chord-orbit disk induces a marker on $\alpha$, which is then the marker at the positive puncture of the $(\alpha;c_1\dots c_m)$-orbit-chord disk.
\end{itemize}
Since the algebraic number of $(a,c_1\dots c_k)$-disks equals $\pm 1$ a repetition of the above count of ends of moduli spaces shows that (a) implies (b) and (d) with the induced marker. To see that (d) holds for any marker, consider the cobordism of moduli spaces corresponding to moving the marker on the orbit. Since there is no Reeb orbit or chord with action between $\alpha$ and the cyclic word there is no splitting in the moduli space and the count is independent of the location of the marker. This finishes the proof.
\end{pf}

\begin{proof}[Proof of Theorem \ref{t:count}]
Theorem \ref{t:count} is a restatement of Theorem \ref{l:glu}. 
\end{proof}

\subsection{Analogous results for mixed Reeb chords}
The analogues of the results for pure Reeb chords in Section \ref{s:unique} for mixed chords $a$ connecting an arbitrary Legendrian submanifold to the attaching sphere $\Lambda$ are similar to the pure chord case but simpler. We first introduce notation. Let $\Lambda^{0}$ be a Legendrian submanifold disjoint from $\Lambda$. 

\begin{itemize}
\item A holomorphic disk $u\colon D\to X_{\Lambda}(\epsilon)$ is an {\em $(a;a_1c_1\dots c_m)$-chord-chord disk} if it has the following properties:
\begin{itemize}
\item $u(\pa D)\subset L\cup C$.
\item $u$ has one positive boundary puncture mapping to the Reeb chord $a$ of $\Gamma$.
\item $u$ has one Lagrangian intersection punctures mapping to $L\cap C$.
\item $u$ has negative boundary punctures mapping to the Reeb chords $a_1,c_1,\dots,c_m$, where $a_1$ is a Reeb chord connecting $\Lambda_0$ to $\Lambda$ and $c_j$ connects $\Lambda$ to itself. (Note that there is an induced order of the Reeb chords at the negative boundary punctures.)
\end{itemize}
\item A holomorphic disk $u\colon D\to X_{\Lambda}(\epsilon)$ is an {\em $(b;c_1\dots c_m)$-chord-chord disk} if it has the following properties:
\begin{itemize}
\item $u(\pa D)\subset L$.
\item $u$ has one positive boundary puncture mapping to the Reeb chord $b$ in $Y_{\Lambda}(\epsilon)$ corresponding to $(a_1c_1\dots c_m a_2)$.
\item $u$ has negative boundary punctures mapping to the Reeb chords $a_1c_1,\dots,c_m a_2$ of $\Lambda$. 
\end{itemize}
\item A holomorphic disk $u\colon D\to Y_\Lambda(\epsilon)\times\R$ is a {\em $(a_1,a_2;b)$-chord disk} if it has the following properties:
\begin{itemize}
\item $u(\pa D)\subset \Gamma\times\R\cup\Lambda_0\times\R$.
\item $u$ has two positive boundary punctures mapping to the Reeb chords $a_1,a_2$ connecting $\Gamma$ to $\Lambda_0$ and $\Lambda_0$ to $\Gamma$, respectively.
\item $u$ has one negative boundary puncture mapping to the Reeb chord $b$ connecting of $\Lambda_0$. (Note that there is an induced order of the Reeb chords at the positive boundary punctures.)
\end{itemize}
\end{itemize}

In analogy with Lemma \ref{l:unique} we have the following result for basic disks: 

\begin{lma}\label{l:unique2}
For a mixed chord $a$ of between $\Lambda$ and $\Lambda_0$,
the $J_a$-holomorphic disk $S_c$ is the only $(a;a')$-chord-chord disk in $\hat X_{\Lambda}(a;\epsilon)$.
\end{lma}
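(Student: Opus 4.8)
The plan is to transplant the proof of Lemma~\ref{l:unique} to the mixed setting, where it becomes slightly \emph{simpler}: since only the endpoint of $a$ lying on $\Lambda$ is absorbed into the handle, the Reeb chord strip of $a$ is an arc rather than a loop, so the auxiliary region carries no monodromy and no covering space is needed. Concretely, I would first fix the region $A(a)=(U(a)\times\R_{-})\cup W^{\circ}(\epsilon^{q})\cup(U(a')\times\R_{+})\subset\hat X_{\Lambda}(\epsilon;a)$ exactly as in \eqref{e:dfnA(c)}, equipped with the almost complex structure $J_{a}$ of Section~\ref{sec:mixedanalogue} and containing the basic strip $S_{a}$ of Lemma~\ref{l:S(a)} (two quadrants of the $(x_{1},y_{1})$-plane in $W(\epsilon^{q})$, one corner at $C\cap L$, boundary on $(\Lambda_{0}\times\R)\cup C\cup L$). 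Using the coordinates of Section~\ref{ss:attachingmap} based at the $\Lambda_{0}$-endpoint of $a$, one gets a projection $\pi\colon A(a)\to\C^{n-1}$ directly (there is no loop to go around, only the global ambiguity by the factor $J_{0}$ of \eqref{e:monodromy1}), and Lemma~\ref{l:lochol} then shows that $\pi\circ u$ is $J_{0}$-holomorphic for every $J_{a}$-holomorphic $u\colon D\to A(a)$. This replaces the covering construction of Lemma~\ref{l:tildemap}.

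Next I would prove the local uniqueness statement, the analogue of Lemma~\ref{l:locunique}: the linearized $\bar\pa_{J_{a}}$-operator at $S_{a}$, acting on vector fields with two derivatives in $L^{2}$ and small positive exponential weights at the two punctures, augmented by the two cut-off translations in the symplectization ends, is invertible. Indeed, its boundary condition along $(\Lambda_{0}\times\R)\cup C\cup L$ is of the standard form, and after the small Legendrian isotopy of Section~\ref{ss:attachingmap} at the $\Lambda$-endpoint the linearized Reeb flow along $a$ carries $T_{a^{-}}\Lambda_{0}$ to a subspace transverse to the tangent of the deformed $\Lambda$ at $a^{+}$. Restricting to variations of $L^{2}$-norm $\le C\epsilon^{q}$, which under the exponential map stay inside the region $W(\epsilon^{q})$ where $\hat V_{\pm\epsilon}$ is a product, the implicit function theorem yields $0<\delta<1$ such that $S_{a}$ is the only $(a';a)$-chord-chord disk in $A(a;\delta\epsilon)$ for all sufficiently small $\epsilon>0$.

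Then I would upgrade to the global statement by the area/monotonicity argument of Lemma~\ref{l:unique}. Arguing by contradiction, suppose there are $\epsilon_{j}\to 0$ and $(a';a)$-chord-chord disks $S_{a}^{j}$ in $\hat X_{\Lambda}(\epsilon_{j};a)$ not contained in $A(a;\epsilon_{j})$. Using the symplectic forms $e^{\sigma_{j}t}\lambda$ on the ends and truncating at $\pm t_{0}$ as in Lemma~\ref{l:unique}, let $E$ be the part of the domain mapping into $A(a;\epsilon)\cap\hat X_{\Lambda;t_{0}}(\epsilon;a)$ and consider the $J_{0}$-holomorphic map $v_{j}^{\perp}=\pi\circ v_{j}|_{E}\colon E\to\C^{n-1}$. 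Its boundary lands in a ball of radius $\ll\epsilon^{p}$ near the chords together with the $\C^{n-1}$-projections of $C$, $L$ and $\Lambda_{0}\times\R$, and these three can be arranged to be totally real linear subspaces. Hence, if $v_{j}^{\perp}(E)$ escapes $B^{n-1}(\epsilon_{j}^{q})$, monotonicity forces $\area(v_{j}^{\perp})\ge c_{0}\epsilon_{j}^{2q}$, while the area of $S_{a}^{j}\cap\hat X_{\Lambda;t_{0}}(\epsilon;a)$ is $\Ordo(\epsilon^{2p})$ by the same computation as in Lemma~\ref{l:unique}; for $j$ large this is a contradiction. Thus every $(a';a)$-chord-chord disk lies in $A(a;\epsilon)$ for small $\epsilon$, and a bootstrap argument together with the local uniqueness above forces it to equal $S_{a}$.

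The only real work, and the step I expect to need the most care, is setting up the local model around a mixed chord so that the above goes through: checking that the coordinates based at $a^{-}\in\Lambda_{0}$ and those based at $a^{+}\in\Lambda$ of Section~\ref{ss:attachingmap} patch (up to the global factor $J_{0}$ of \eqref{e:monodromy1}) to a genuinely well-defined projection $\pi$ on $A(a)$, and that in these coordinates the three Lagrangian pieces $C$, $L$ and $\Lambda_{0}\times\R$ carrying the boundary of a candidate disk each project to a totally real subspace of $\C^{n-1}$, so that the monotonicity inequality in the final step is legitimate. Once this is verified the proof is word-for-word that of Lemma~\ref{l:unique} with the $4$-fold cover of Lemma~\ref{l:tildemap} simply deleted.
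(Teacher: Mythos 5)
Your proposal is correct and follows the same route as the paper, whose entire proof of Lemma~\ref{l:unique2} is the one-line statement that it is analogous to Lemma~\ref{l:unique}. You have essentially spelled out what ``analogous'' means, correctly identifying the one genuine simplification: since the endpoints of a mixed chord $a$ lie on distinct Legendrians, the neighborhood $A(a)$ has no $S^{1}$-factor, so the $4$-fold cover of Lemma~\ref{l:tildemap} can be dropped and the projection $\pi$ is defined directly; the local invertibility and the global monotonicity/area comparison then carry over verbatim, with the boundary projections of $C$, $L$, and $\Lambda_{0}\times\R$ all landing in $\R^{n-1}\cup i\R^{n-1}$ as required for the monotonicity estimate.
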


\begin{proof}
Analogous to Theorem \ref{l:unique}.
\end{proof}

With this result established the following main result for disks connecting the above type of chords are proved using the exact same gluing and compactness arguments as in the proof of Theorem \ref{l:glu}.

\begin{thm}\label{l:gluLeg}
For all sufficiently small $\epsilon>0$ the following holds.
\begin{itemize}
\item[{\rm (a)}]
The algebraic number of $(a;a_{1}c_1\dots c_m)$-chord-chord disks in $X(\epsilon)$ equals $\pm 1$.
\item[{\rm (b)}]
The algebraic number of $(b;a_{1}c_1\dots c_ma_{m})$-orbit-chord disks in $X(\epsilon)$ equals $\pm 1$.
\item[{\rm (c)}]
The algebraic number of $(a_1,a_2;b)$-chord disks in $Y_\Lambda(\epsilon)\times\R$ equals $\pm 1$.
\end{itemize}
\end{thm}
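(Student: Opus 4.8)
The plan is to mimic the proof of Theorem~\ref{l:glu} line by line, with Lemma~\ref{l:unique2} in place of Lemma~\ref{l:unique} and the mixed basic strip $S_a$ of Lemma~\ref{l:S(a)} in place of $S_c$; at every step the mixed case is a mild simplification of the pure one, so I only indicate where it differs.

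First I would establish the mixed analogue of Lemma~\ref{l:onesimple}: for a mixed chord $a$ between $\Lambda$ and $\Lambda_0$ and any mixed chord $b$ with $\act(b)\le\act(a)$, the algebraic count of $(b';b)$-chord-chord disks in the adapted cobordism $\hat X_{\Lambda}(\epsilon;a)$ is $\pm1$. As in Lemma~\ref{l:onesimple}, one deforms the handle and the attaching map through a one-parameter family $\hat X^{t}_{\Lambda}(\epsilon)$ with $\hat X^{0}_{\Lambda}(\epsilon)=\hat X_{\Lambda}(\epsilon;b)$ and $\hat X^{1}_{\Lambda}(\epsilon)=\hat X_{\Lambda}(\epsilon;a)$, together with a compatible family of almost complex structures $J^{t}$ with $J^{0}=J_{b}$ and $J^{1}$ generic, choosing $\epsilon$ far smaller than the minimal action gap $\Delta\act$ among distinct Reeb orbits of $Y$, cyclic words, and Reeb chords of $\Lambda\cup\Lambda_0$. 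Since $\act(b')-\act(b)=\Ordo(\epsilon^{2p})$ — which holds for mixed chords by the same estimates as in Section~\ref{sec:identifications} — Stokes' theorem rules out every splitting of the parametrized moduli space, so its boundary consists exactly of the two end moduli spaces and the count is $t$-independent; at $t=0$ it equals $\pm1$ by Lemma~\ref{l:unique2} and the uniform transversality of $S_b$ (the mixed version of Lemma~\ref{l:locunique}, which holds because the linearized Reeb flow sends the tangent space of one Legendrian at an endpoint of $b$ to a subspace transverse to that of the other Legendrian there).

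Next I would carry out the inductive gluing exactly as in the proof of Theorem~\ref{l:glu}, inducting on the number $m$ of pure chords $c_j$ in the word, the case $m=0$ being supplied by the previous paragraph applied to $a_1$. For the inductive step, one feeds the already-established disk counts into the boundary count of a suitable one-dimensional moduli space: a moduli space of disks with boundary on $C\cup L\cup(\Lambda_0\times\R)$, negative punctures carrying the word $a_1c_1\dots c_m a_2$, one Lagrangian intersection puncture at $C\cap L$, and two positive punctures corresponding to the two subwords into which $a_1c_1\dots c_m a_2$ is split; near the positive punctures the disk has an injective point, so transversality is automatic. Since $\epsilon\ll\Delta\act$, exactness forces the ends of this moduli space to be only the two expected configurations — a two-level building whose top level is an $(a_1,a_2;b)$-chord disk over $Y_{\Lambda}(\epsilon)\times\R$ and whose bottom level is the disk of part~(b), or a two-component disk split at $C\cap L$ into two shorter chord-chord disks. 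The boundary count is then a multiplicative relation among the disk numbers whose other factors are $\pm1$ by induction, so the remaining factor is $\pm1$; running the argument for the two geometric set-ups involved (one with the positive puncture at the chord $b$ of $\Lambda_0^{+}\subset Y_{\Lambda}(\epsilon)$ obtained from Lemma~\ref{l:newchordsL_0}, one with the positive puncture at a chord of $\Gamma$) proves~(a), (b), and~(c) at the same time.

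I expect the only real point of care to be the one that already drives the pure case: verifying that $\epsilon\ll\Delta\act$ genuinely excludes, for each of these one-dimensional moduli spaces, every broken or multi-level degeneration other than the two that contribute to the counting relation. Once that bookkeeping is in place, transversality, Floer gluing, and the orientation conventions are identical to those used in Section~\ref{A:D} and \cite{BO,EO,ES}, so no analytic input beyond Lemma~\ref{l:unique2} is required.
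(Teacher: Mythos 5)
Your proposal follows exactly the strategy the paper intends: substitute Lemma~\ref{l:unique2} and the mixed basic strip $S_a$ of Lemma~\ref{l:S(a)} for Lemma~\ref{l:unique} and $S_c$, rerun the one-parameter deformation argument of Lemma~\ref{l:onesimple} to get the base case, and then induct on the number of pure chords using boundary counts of one-dimensional moduli spaces, with the condition $\epsilon\ll\Delta\act$ ruling out unwanted splittings. This is precisely what the paper means by ``analogous to Theorem~\ref{l:glu}'' (the mixed case being slightly simpler, with no analogue of part~(d) and one fewer Lagrangian intersection puncture on the relevant disks), so the proposal is correct and takes essentially the same route.
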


\begin{proof}
Analogous to Theorem \ref{l:unique}. 
\end{proof}

\bibliographystyle{plain}
\bibliography{beerefs}

\end{document}